\numberwithin{equation}{section}
\newtheorem{theorem}{Theorem}[section]
\newtheorem{definition}[theorem]{Definition}
\newtheorem{proposition}[theorem]{Proposition}
\newtheorem{corollary}[theorem]{Corollary}
\newtheorem{lemma}[theorem]{Lemma}
\newtheorem{example}[theorem]{Example}
\newtheorem{conjecture}[theorem]{Conjecture}
\theoremstyle{definition}
\newtheorem{remark}[theorem]{Remark}
\newcommand{\mf}{m_{_{\mathrm{faithful}}}}
\newcommand{\g}{\mathfrak{g}}
\newcommand{\ZZ}{\mathrm{Z}}
\newcommand{\Z}{\mathbb{Z}}
\newcommand{\C}{\mathbb{C}}
\newcommand{\Q}{\mathbb{Q}}
\newcommand{\Hom}{\mathrm{Hom}}
\newcommand{\F}{\mathbb{F}}
\newcommand{\GL}{\mathrm{GL}}
\newcommand{\ad}{\mathrm{ad}}
\newcommand{\Iex}{I_\mathrm{ex}}
\newcommand{\rank}{\mathrm{rk}}
\newcommand{\f}{\mathfrak{f}}
\newcommand{\FZ}{\mathbb{Z}/p\mathbb{Z}}
\newcommand{\1}{\mathbf{1}}
\newcommand{\bb}{{\bf b}}
\newcommand{\R}{\mathrm{R}}
\newcommand{\lev}{\mathbf{lev}}
\newcommand{\G}{\mathrm{G}}
\newcommand{\m}{\mathfrak{m}}
\newcommand{\OO}[1]{\mathcal{O}/\mathfrak{p}^{#1}}
\newcommand{\restr}[2]{#1_{|_{#2}}}
\newcommand{\GG}{\mathscr{G}}
\newcommand{\T}{\mathbf{T}}
\newcommand{\vv}{{\bf v}}
\newcommand{\ww}{{\bf w}}
\newcommand{\zz}{{\bf z}}
\newcommand{\E}{\mathscr{E}}
\DeclareMathOperator{\Ann}{Ann}
\renewcommand{\i}{\mathbf{i}}
\renewcommand{\j}{\mathbf{j}}
\renewcommand{\k}{\mathbf{k}}
\newcommand{\fix}{\mathrm{Fix}}
\newcommand{\sv}{\mathsf v}
\newcommand{\D}{\mathscr{D}}
\newcommand{\A }{\mathscr{A}}
\newcommand{\Hall}{\mathscr{H}}
\title{Polynomiality of the faithful dimension for 
nilpotent groups\\ over finite truncated valuation rings }
\author{Mohammad Bardestani}
\address{John Abbott College, 21 275 Rue Lakeshore Road, Sainte-Anne-de-Bellevue, QC, H9X 3L9, Canada.}
\email{mohammad.bardestani@gmail.com}
\author{Keivan Mallahi-Karai}
\address{Constructor University, Campus Ring I, Bremen 28759 Germany}
\email{kmallahikarai@constructor.university}
\author{Dzmitry Rumiantsau}
\address{Constructor University, Campus Ring I, Bremen 28759 Germany}
 \email{drumiantsau@constructor.university}
\author{Hadi Salmasian}
\address{Department of Mathematics, University of Ottawa, STEM Complex,
150 Louis-Pasteur Pvt,
Ottawa, ON,
Canada K1N 6N5}
\email{hadi.salmasian@uottawa.ca}
\begin{document}

\begin{abstract}
Given a finite group $\G$, the {\it faithful dimension} of $\G$ over $\C$, denoted by $m_\mathrm{faithful}(\G)$, is the smallest integer $n$ such that $\G$ can be embedded in $\GL_n(\C)$. Continuing the work initiated in~\cite{BMS19}, 
we address the problem of determining 
the faithful dimension of a finite $p$-group of the form $\GG_R:=\exp(\g_R)$  
associated to 
$\g_R:=\g \otimes_\Z R $ in the Lazard correspondence, where 
$\g$ is  a nilpotent $\Z$-Lie algebra and $R$ ranges over finite truncated valuation rings.

Our first main result is that if $R$ is a finite field with $p^f$ elements and $p$ is sufficiently large, then
$m_\mathrm{faithful}(\GG_R)=fg(p^f)$ where $g(T)$ belongs to a finite list of polynomials $g_1,\ldots,g_k$,
with non-negative integer coefficients.
The latter list of polynomials is 
uniquely determined by the Lie algebra $\g$. Furthermore, for each $1\le i\leq k$ the set of pairs $(p,f)$ for which $g=g_i$ is a finite union of Cartesian products $\mathscr P\times \mathscr F$, where $\mathscr P$ is a Frobenius set of prime numbers and $\mathscr F$ is a subset of $\mathbb N$ that belongs to the Boolean algebra generated by arithmetic progressions.
Previously, existence of such a polynomial-type formula for   
$m_\mathrm{faithful}(\GG_R)$ was only established under the assumption that either $f=1$ or  $p$ is fixed.

Next we formulate a conjectural polynomiality property
for the value of $m_\mathrm{faithful}(\GG_R)$ in the more general setting where $R$ is a finite truncated valuation ring, and prove special cases of this conjecture. In particular, we show that for 
a vast class of Lie algebras $\g $ that are defined by partial orders, 
$m_\mathrm{faithful}(\GG_R)$
 is given  by a single polynomial-type formula.

Finally, we compute $m_\mathrm{faithful}(\GG_R)$ precisely in the case where 
$\g$ is the free metabelian nilpotent Lie algebra of class $c$ on $n$ generators and 
 $R$ is a finite truncated valuation ring.

\end{abstract}

\clearpage\maketitle
\thispagestyle{empty}


\medskip


\setcounter{tocdepth}{1}


\section{Introduction}
\label{sec:intro}
The \emph{faithful dimension} of a finite group $\G$ over $\C$  is
the smallest possible dimension of a faithful complex representation of $\G$. 
Throughout this paper we denote the faithful dimension of $\G$ by 
$m_{\mathrm{faithful}}(\G)$.
It is known
that $\mathrm{ed}_\C (\G)\leq m_{\mathrm{faithful}}(\G)$ for a finite group $\G$ (see~\cite[Proposition 4.15]{BerFav}), where $\mathrm{ed}_\mathbb K (\G)$ 
denotes the essential dimension of $\G$ over a field $\mathbb K$ as defined  by Buhler and 
Reichstein \cite{Reichstein}. It follows from a theorem of Karpenko and Merkurjev~\cite{Merkurjev}
that  equality holds for all finite $p$-groups (see also~\cite{MerkurjevI}). 

The current work is a continuation of~\cite{BMS19} in which the question of determining the faithful dimension was systematically studied for finite $p$-groups associated to $\Z$-Lie algebras via the 
\emph{Lazard correspondence}.  Let $\mathfrak f$ be a  
finite $\Z$-Lie algebra which is nilpotent and has cardinality $p^k$ for a prime number $p$ that is strictly larger than the nilpotency class of $\mathfrak f$. Then to $\mathfrak f$ we can associate a finite $p$-group with  underlying set $\mathfrak f$  and with
group multiplication defined by the Baker-Campbell-Hausdorff formula, that is
 \begin{align*}
x*y:=
\sum_{n>0}\frac{(-1)^{n+1}}{n}\sum_{\substack{(a_1,b_1),\dots, (a_n,b_n)\\ a_j+b_j\geq 1}}\frac{(\sum_{1\leq i\leq n} a_i+b_i)^{-1}}{a_1!b_1!\cdots a_n!b_n!}(\ad_x)^{a_1}(\ad_y)^{b_1}\dots (\ad_x)^{a_n}(\ad_y)^{b_n-1}(y),\notag
\end{align*}
for $x,y\in \mathfrak f$. By a result of Lazard~\cite[Chapter 9]{Khukhro} every finite $p$-group of nilpotency class strictly smaller than $p$ is obtained in this way from a unique Lie algebra $\mathfrak f$. It is natural to denote this $p$-group by $\exp(\mathfrak f)$. 
 
A general class of examples of the above construction can be given as follows. Let $\g$ be a nilpotent $\Z$-Lie algebra that is finitely generated as an abelian group, and let $R$ be a finite commutative unital ring of cardinality  $p^k$ where $p$ is a prime number strictly larger than the nilpotency class of $\g$. Then the above construction yields a finite $p$-group $\GG_R:=\exp(\g_R)$ where $\g_R:=\g\otimes_\Z R$.  
For example, for $p\geq n\geq 2$ the group of $n\times n$ unitriangular matrices over $R$  is obtained in this fashion from the Lie algebra of $n\times n$ strictly upper triangular matrices over $R$ (which has nilpotency class $n-1$).


Let $\F_q$ denote a finite field of order $q:=p^f$. A central  result of our previous work~\cite{BMS19} is about the dependence of   $m_{\mathrm{faithful}}(\GG_{\F_q})$ on $p$  and $f$. More precisely, we proved the following two assertions (see Theorems 2.5 and 2.7 of~\cite{BMS19}): 
\begin{itemize}

\item[(A)] There exist finitely many polynomials $g_1,\ldots,g_k$ with non-negative integer coefficients such that for every prime $p$ we have $m_\mathrm{faithful}(\GG_{\F_p})=g(p)$ where $g=g_i$ for some $1\leq i\leq k$. The set of $p$ for which $g=g_i$ is  definable in terms of  existence of solutions of polynomial systems modulo $p$.

\item[(B)] Given any sufficiently large prime $p$, there exist finitely many polynomials $g_1,\ldots,g_k$, depending only on $\g $ and on $p$, such that for every $q:=p^f$ with $f\in\mathbb N$ we have $m_\mathrm{faithful}(\GG_{\F_{q}})=fg(q)$ where $g=g_i$ for some $1\leq i\leq k$. The set of $f\in\mathbb N$ for which $g=g_i$ is a finite union of arithmetic progressions. 
\end{itemize}  

In statement A only $p$ varies, while in statement B only $f$ varies and it is not clear how the $g_i$ depend on $p$. Statements A and B  suggest that 
$m_\mathrm{faithful}(\GG_{\F_q})$ should possess 
a stronger polynomiality-type property,  where both $p$ and $f$ are allowed to vary simultaneously. We did not succeed in proving  a unifying assertion in \cite{BMS19} (see Remark 2.8 of the latter reference). 

The first major result of the present paper 
(Theorem~\ref{thm:mixed}) proves such a unifying assertion. 
For a polynomial $g(T) \in\mathbb Z[T]$, we denote the set of primes $p$ for which the congruence
$g(T) \equiv 0\ (\mathrm{mod}\ p)$ has a solution
by $V_g$. We call a set of prime numbers a \emph{Frobenius set} if it is in the Boolean algebra generated by the sets $V_g$. The Frobenius sets are closely related to Serre's Frobenian sets (see \cite{BMS19} or \cite{Lagarias} for a precise statement). Henceforth, by an arithmetic progression in $\mathbb N$ we mean a subset of $\mathbb N$ of the form $\left\{a+kb\,:\,k\in\mathbb Z^{\geq 0}\right\}$ where $a\in\mathbb N$ and $b\in\mathbb Z^{\geq 0}$.

\begin{theorem}\label{thm:mixed}
Let $\g$ be a nilpotent $\Z$-Lie algebra which is finitely generated as an abelian group. 
Then there exist  
\begin{itemize}
\item[(a)]
a constant $M=M(\g)$ only depending on $\g$, 

\item[(b)] a partition $\left\{\mathscr{P}_1, \dots, \mathscr{P}_r\right\}$ of the set of primes larger than $M$ into Frobenius sets, 
\item[(c)] a partition $\{\mathscr{F}_1, \dots, \mathscr{F}_s\}$ of natural numbers into  arithmetic progressions, and 
 \item[(d)]  polynomials $g_{ij}(T)$, for  $1\leq i\leq r$ and $1\leq j\leq s$, with non-negative integer coefficients,  
 \end{itemize} such that 
for all $q:=p^f$ where $(p,f) \in \mathscr{P}_i \times \mathscr{F}_j$
 for some
$ 1 \le i \le r$ and $1\le j\le s$, we have
\[
\mf(\GG_{\F_{q}})=fg_{ij}(q). 
\]
\end{theorem}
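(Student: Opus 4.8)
The plan is to combine the two partial results (A) and (B) from \cite{BMS19} with a careful analysis of the parameter dependence, using as the central tool a formula expressing $\mf(\GG_{\F_q})$ in terms of the generic ranks of certain \emph{commutator matrices} attached to $\g$. Recall from the theory underlying (A) and (B) that $\mf(\GG_{\F_q})$ can be computed by a Kirillov-orbit/character-theoretic argument: one stratifies the dual of the center (or of a suitable central quotient) into "types" according to the rank of the associated antisymmetric form $\omega_\lambda$ on $\g_{\F_q}$, and $\mf(\GG_{\F_q})$ is obtained as a minimum, over coverings of the relevant set of characters, of sums of the quantities $q^{(\dim\g_{\F_q}-\rho)/2}$ where $\rho$ ranges over the ranks that occur. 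The rank $\rho$ of $\omega_\lambda$, for $\lambda$ lying in a fixed stratum, is itself governed by whether certain minors of a matrix with entries polynomial in $\lambda$ (and with coefficients in $\F_q$ coming from the structure constants of $\g$) vanish. This is precisely the kind of data that is controlled simultaneously in $p$ and $f$ by the Lang--Weil estimates together with elimination theory: the number of $\F_q$-points on an $\F_p$-variety, and hence the dimension of each stratum, is eventually a polynomial in $q$ once $q$ is large, with the polynomial depending only on the geometry of the variety over $\overline{\mathbb F_p}$.

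The key steps, in order, would be as follows. \emph{Step 1:} Reduce $\mf(\GG_{\F_q})$ to a finite combinatorial optimization problem over $\F_q$-rational strata, exactly as in \cite{BMS19}; the output is a formula $\mf(\GG_{\F_q}) = \min_{\mathscr C} \sum_{\text{strata}} (\text{number of }\F_q\text{-points}) \cdot q^{e/2}$-type expression, where the exponents $e$ and the indexing set of strata are determined by the $\F_p$-geometry. \emph{Step 2:} Invoke the Chebotarev-type/Lang--Weil analysis from the proof of (A): the truth value of each polynomial condition "this minor is generically nonzero over $\F_q$" and "this variety is nonempty/has a given dimension" depends on $p$ only through a Frobenius condition, so the set of primes $p$ (above $M(\g)$) is partitioned into finitely many Frobenius sets $\mathscr P_1,\dots,\mathscr P_r$ on each of which the entire combinatorial structure of the optimization problem over $\overline{\mathbb F_p}$ is constant. \emph{Step 3:} Fix one such $\mathscr P_i$ and run the analysis from the proof of (B): with the geometry now fixed, the dependence on $f$ of the $\F_q$-point counts of each stratum is eventually periodic-polynomial in $q=p^f$, i.e.\ for $f$ in a fixed arithmetic progression each count equals a fixed polynomial in $q$; intersecting the finitely many progressions arising from the various strata yields a partition $\{\mathscr F_1,\dots,\mathscr F_s\}$ of $\mathbb N$ into arithmetic progressions (this partition can be chosen uniformly in $i$ after taking a common refinement). \emph{Step 4:} On each cell $\mathscr P_i\times\mathscr F_j$ the optimization problem becomes: minimize a fixed finite collection of expressions, each a polynomial in $q$ with non-negative integer coefficients times the ambient power of $q$; since for $q$ large the ordering of these finitely many polynomials-in-$q$ stabilizes, the minimum is itself one fixed such polynomial, which we call $q\cdot h_{ij}(q)$ — wait, more precisely one shows the minimum has the shape $f g_{ij}(q)$ because the level-set counting introduces the factor $f$ exactly as in (B) (the character count on a stratum defined over $\F_p$ but considered over $\F_q$ contributes a factor accounting for the $f$ Galois conjugates). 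Extracting this factor of $f$ and checking $g_{ij}$ has non-negative integer coefficients is the final bookkeeping step.

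The main obstacle I expect is \emph{Step 4}, and more specifically the interaction between Steps 2 and 3: one must ensure that the Frobenius partition in $p$ and the arithmetic-progression partition in $f$ can be chosen \emph{independently} of one another, rather than the progression in $f$ depending on which Frobenius class $p$ falls into. The geometry over $\overline{\mathbb F_p}$ (dimensions and degrees of the strata, incidences among them) is what controls the $f$-dependence, and a priori this geometry changes with the Frobenius class of $p$; the resolution is that there are only finitely many possible geometries, so one takes the common refinement of all the resulting $f$-partitions — but one must check that the transition polynomials $g_{ij}$ remain polynomials with non-negative integer coefficients after this refinement, and that no "mixed" cell $\mathscr P_i\times\mathscr F_j$ is empty in a way that would break the claimed uniformity. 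A secondary technical point is making the Lang--Weil error terms genuinely disappear: one needs $q$ large enough that the point counts are \emph{exactly} polynomial, not just approximately, which forces the hypothesis "$p$ sufficiently large" to be upgraded to a bound $M(\g)$ that also absorbs the small-$f$ exceptions into the arithmetic progressions (this is why the progressions, not just cofinite sets, are needed).
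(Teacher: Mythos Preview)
Your Step~1 is essentially right in spirit: the paper's Theorem~2.4 (from \cite{BMS19}) reduces $\mf(\GG_{\F_q})$ to $f$ times the minimal value of $g_\mu(q)=q^{a_1}+\cdots+q^{a_{l_1}}+l_2$, where $\mu=(a_1,\dots,a_{l_1})$ ranges over rank profiles that are \emph{realized}, i.e.\ for which a certain $\Z$-variety $\mathbf X_\mu$ has an $\F_q$-point. But from this point on your proposal has a genuine gap. The relevant question is \emph{existence} of an $\F_q$-point on each $\mathbf X_\mu$, not counting; your assertion that ``the number of $\F_q$-points \dots\ is eventually a polynomial in $q$'' is simply false for general varieties, and Lang--Weil only gives asymptotics (and existence for large $q$ under irreducibility hypotheses you have no control over here). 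So the tool you propose cannot deliver the structure of the set $\{q:\mathbf X_\mu(\F_q)\neq\varnothing\}$ for all $f\ge 1$.

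The paper's resolution is a different and sharper tool, which is exactly what overcomes the obstacle you correctly flag. Ax's theorem from the model theory of finite fields (\cite[Theorem~11]{Ax2}) says that for any multivariate system over $\Z$, the set of prime powers $q$ for which it has an $\F_q$-solution lies in the Boolean algebra generated by finite sets and the sets $Z_h=\{q:h\text{ has a root in }\F_q\}$ for \emph{single-variable} $h\in\Z[T]$. This reduces the whole problem to analyzing $Z_h$, and here Dedekind's theorem on the factorization of $\bar h$ over $\F_p$ gives precisely the product structure: the set of $(p,f)$ with $p\nmid\mathrm{Disc}_h$ and $\bar h$ having a root in $\F_{p^f}$ equals $\bigcup_i \mathscr P_i\times(\bigcup_j n_{ij}\mathbb N)$, where the $\mathscr P_i$ are the Frobenius sets cut out by the division (Abteilung) of the Artin symbol in the splitting field of $h$, and the $n_{ij}$ are the degrees of the irreducible factors of $\bar h$ (Proposition~2.3 in the paper). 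Your ``common refinement'' idea does not work on its own because, without Ax's reduction to one variable, running (B) for each $p$ gives $f$-progressions that a~priori depend on $p$ in an uncontrolled way (not merely through finitely many Frobenius classes), so there is no finite family to refine. The missing ingredients are precisely Ax's theorem and Dedekind's theorem; once you have those, Steps~2--4 collapse into a direct argument with no Lang--Weil and no separate invocation of (A) and (B).
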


We remark that the groups $\GG_{\F_q}$ have exponent $p$, so that  by a result of Brauer  $\Q(e^{2\pi i/p})$ is a splitting field of $\GG_{\F_q}$. It follows from~\cite{Merkurjev} that $\mathrm{ed}_\mathbb K(\GG_{\F_q})=\mf(\GG_{\F_q})$ for every field $\mathbb K$ that contains $\Q(e^{2\pi i/p})$.

A natural extension of the class of  $p$-groups  $\GG_{\F_q}$ is the groups $\GG_R$ with $R\cong \mathcal O/\mathfrak p^d$, where $\mathcal O$ is the valuation ring of a local field and $\mathfrak p$ is the prime ideal of $\mathcal O$. 
Up to isomorphism, every finite local ring with a nilpotent and principal maximal ideal is of the form $\mathcal O/\mathfrak p^d$ (see~\cite{Deligne,McLean}). Such rings are called \emph{finite truncated valuation 
rings}.

Let $R$ be  a finite truncated valuation ring with maximal ideal $\mathfrak m$. We use $d=d_R$ to denote the  smallest positive integer satisfying $\mathfrak m^d=0$. If $p=p_R:=\mathrm{char}(R/\mathfrak m)$, then $pR=\mathfrak m^e$ for some $e=e_R\geq 1$ and $\mathcal O/\mathfrak m\cong \F_{p^f}$ for some $f=f_R\geq 1$. 
Henceforth we refer to the quadruple $(d,p,e,f)$ as the \emph{associated parameters} of $R$.  It turns out that we can formulate the problem (and our results) only in terms of  $(d,p,e,f)$, even though these parameters do not identify $R$ uniquely up to isomorphism.
We conjecture the following extension of Theorem~\ref{thm:mixed}
for  $\mf(\GG_R)$.
\begin{conjecture}
\label{conjj}
Let $\g$ be as in Theorem~\ref{thm:mixed}. Then the
constant $M(\g)$, the  partitions $\{\mathscr{P}_1,\ldots,\mathscr P_r\}$ and 
$\{\mathscr{F}_1,\ldots,\mathscr F_s\}$,  and the polynomials $g_{ij}(T)$ can be chosen such that
for every finite truncated valuation ring $R$ with associated parameters $(d,p,e,f)$ we have
\[
\mf(\GG_R)=f\sum_{\ell=0}^{e-1}g_{ij}(p^{f(d-\ell)})
\quad\text{ whenever }(p,f)\in \mathscr P_i\times \mathscr F_j.\] 
\end{conjecture}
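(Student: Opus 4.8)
The plan is to push the strategy of~\cite{BMS19} through the $\m$-adic filtration of $R$. As a first step I would reduce, exactly as in the field case, to a weighted covering problem on the characters of the centre. Since $\GG_R$ is a $p$-group, a representation is faithful if and only if its restriction to $\ZZ(\GG_R)$ is, and under the Lazard correspondence $\ZZ(\GG_R)=\ZZ(\g_R)$. Enlarging $M(\g)$ if necessary so as to clear the Baker--Campbell--Hausdorff denominators and all the torsion attached to $\g$ (in the centre, the derived subalgebra, and the ``commutator matrix''), one obtains, for every $R$ with $p_R>M(\g)$, identifications $\ZZ(\g_R)\cong \ZZ(\g)\otimes_\Z R$ and $[\g_R,\g_R]\cong[\g,\g]\otimes_\Z R$. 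Kirillov theory then attaches to each $\chi\in\widehat{\ZZ(\GG_R)}$ a minimal dimension $d_\chi$, computed from the radical of the alternating form $(x,y)\mapsto\chi([x,y])$ on $\g_R$, and
\[
\mf(\GG_R)=\min\Bigl\{\,\sum_{\chi\in S}d_\chi \;:\; S\subseteq\widehat{\ZZ(\GG_R)}\ \text{generates}\ \widehat{\ZZ(\GG_R)}\ \text{as an abelian group}\,\Bigr\}.
\]
Writing $m$ for the rank of $\ZZ(\g)$ and using that $R\cong\widehat R$ as $\Or$-modules, the group to be generated is $\cong R^m$, which requires exactly $fem$ generators since $R/pR=R/\m^e$ is elementary abelian of $\F_p$-dimension $fe$.

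Next I would stratify characters by \emph{level}. Identifying $\widehat{\ZZ(\GG_R)}\cong \ZZ(\g)^{\vee}\otimes R$, each nonzero $\chi$ has a level $\ell(\chi)\in\{0,\dots,d-1\}$ — the $\m$-adic valuation of the corresponding element of $\ZZ(\g)^{\vee}\otimes R$ — and a leading term $\bar\chi\in\ZZ(\g)^{\vee}\otimes(\m^{\ell}/\m^{\ell+1})\cong\ZZ(\g)^{\vee}\otimes\F_{p^f}$, which is a character in the sense of the field problem. Such a $\chi$ factors through $\ZZ(\g)\otimes\OO{d-\ell}$ with conductor $d-\ell$, so the radical computation defining $d_\chi$ takes place over $\OO{d-\ell}$. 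The key claim to be proved is that this dimension is governed \emph{only} by the residue-field datum $\bar\chi$, via the substitution $q\leadsto p^{f(d-\ell)}$: if $(p,f)\in\mathscr P_i\times\mathscr F_j$ and $g_{ij}$ is the polynomial furnished by \Cref{thm:mixed}, then an optimal family of $fm$ level-$\ell$ characters costs $f\,g_{ij}(p^{f(d-\ell)})$. Note that the residue field is unchanged — still $\F_{p^f}$ — so the Frobenius/arithmetic-progression class $(i,j)$ stays the same and only the ``size'' rescales; this is the ring-theoretic analogue of the fact that $\mf(\GG_{\Z/p^d})$ should be the $\F_p$-polynomial evaluated at $p^{d}$, not the $\F_{p^d}$-polynomial.

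Granting these two ingredients, the optimisation decouples over levels. Since trivial joint kernel is equivalent to generating $\ZZ(\g)^{\vee}\otimes R$ as an \emph{abelian} group — not merely as an $\Or$-module — one checks, running through the graded pieces $\m^{\ell}R^m/\m^{\ell+1}R^m\cong\ZZ(\g)^{\vee}\otimes\F_{p^f}$ for $0\le\ell\le e-1$, that a minimal-cost generating set consists of exactly $fm$ characters of valuation $\ell$ for each such $\ell$, whose leading terms form a faithful family for $\GG_{\F_{p^f}}$ in the field sense; replacing a level-$\ell$ character by one of smaller valuation is never advantageous, as the cost is increasing in $p^{f(d-\ell)}$. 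Because the cost of a level-$\ell$ character depends only on its leading term, the minimum at level $\ell$ is the field-case minimum of \Cref{thm:mixed} with every occurrence of $q=p^f$ replaced by $p^{f(d-\ell)}$, i.e. $f\,g_{ij}(p^{f(d-\ell)})$; here one uses that the optimal choice of leading terms is the same combinatorial/matroid datum regardless of whether $g_{ij}$ is evaluated at $p^f$ or at $p^{f(d-\ell)}$, which is part of the uniform-in-$q$ description produced by the proof of \Cref{thm:mixed}. Summing over $\ell=0,\dots,e-1$ then yields $\mf(\GG_R)=f\sum_{\ell=0}^{e-1}g_{ij}(p^{f(d-\ell)})$, as conjectured.

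The main obstacle is the claim of the second step: that the Kirillov dimension over $\OO{c}$ attached to a conductor-$c$ central character is always the field-case dimension for the corresponding residue-field leading term, with $q$ replaced by $p^{fc}$, uniformly in $c$ and in $(p,f)$. For the ``straight'' Lie algebras defined by partial orders, and for the free metabelian nilpotent Lie algebras, the radical of the commutator matrix over $\OO{c}$ can be pinned down by an explicit Smith-normal-form type analysis, which is precisely why the conjecture is accessible in those cases. For a general $\g$ the radical over $\OO{c}$ can a priori ``carry'' between $\m$-adic layers, and proving that the resulting dimensions are nevertheless given by the same finite list of polynomials evaluated at $p^{fc}$ — equivalently, that no new strata appear as $c$ grows — is the heart of what remains open.
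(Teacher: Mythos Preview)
The statement is Conjecture~\ref{conjj}, which the paper explicitly leaves open; there is no proof in the paper to compare against. What the paper does prove is the upper bound half of your sketch (Theorem~\ref{thm:upperbound}, via the Ax--Kochen--Greenberg Hensel lifting of Proposition~\ref{prop:filterhensel}) and a weaker lower bound $fg_{ij}(p^{fd})\le\mf(\GG_R)$, together with the full statement in the two families where the commutator matrix is transparent (Theorems~\ref{partial-order-Lie} and~\ref{meta}). Your proposal is likewise not a proof, as you yourself concede in the last paragraph; but two intermediate assertions you rely on are stronger than what is actually known and deserve to be pinpointed.

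The sentence ``the cost of a level-$\ell$ character depends only on its leading term'' is false in general: by Proposition~\ref{Algorithm-R} the cost attached to $\mathbf a$ is governed by $\#\ker_{\OO{d}}F_\g(\mathbf a)$, and this kernel size depends on the full Smith normal form of $F_\g(\mathbf a)$ over $\OO{d}$, not merely on $\mathbf a\bmod\mathfrak p$; two lifts of the same residue-field vector can produce different elementary divisors and hence different costs. The paper's upper bound works precisely because Proposition~\ref{prop:filterhensel} manufactures \emph{particular} lifts $\tilde{\mathbf a}$ for which $F_\g(\tilde{\mathbf a})$ is $\GL(\Or)$-equivalent to a fixed diagonal, so that $\#\ker F_\g(\varpi^\ell\tilde{\mathbf a})$ is exactly what your formula predicts; for an arbitrary minimising family no such control is available, which is why the general lower bound degrades to a single term. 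Your ``decoupling over levels'' is also not automatic: in the paper's treatment of pattern and metabelian groups it rests on the Laplace-expansion partition (Lemma~\ref{Lablace-identity}) combined with an \emph{entrywise} lower bound on $\dim\rho_{\mathbf a}$ in terms of the level of a single coordinate of $\mathbf a$ (Proposition~\ref{level-prop-Min}, Lemma~\ref{kernelskew}), available only because in those cases each variable $T_\i$ occurs as a bare matrix entry of $F_\g$. For general $\g$ neither ingredient is in hand, and both collapse into the ``carry between $\m$-adic layers'' obstruction you name at the end.
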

We are unable to prove Conjecture~\ref{conjj} in full generality, but
we
establish two results 
that support this conjecture. The first result, Theorem~\ref{thm:upperbound}, shows that $\mf(\GG_R)$ is not greater than the conjectural value, but grows with the same rate as $d$ tends to infinity.

\begin{theorem}
\label{thm:upperbound}
Let $\g$, $\{\mathscr P_i\}_{i=1}^r$, 
$\{\mathscr F_i\}_{i=1}^s$, and the $g_{ij}(T)$ 
be as in 
Theorem~\ref{thm:mixed}. 
Fix 
$1\leq i_\circ\leq r$ and $1\leq j_\circ\leq s$.
Then there exists a constant $M>0$ such that for 
every finite truncated valuation ring $R$ with associated parameters $(d,p,e,f)$
satisfying $(p,f)\in\mathscr P_{i_\circ}\times \mathscr F_{j_\circ}$ and  $p>M$, we have   
\[
fg^{}_{i_\circ j_\circ}(p^{fd})
\leq \mf(\GG_{R})\leq f\sum_{\ell=0}^{e-1} g_{i_\circ j_\circ}^{}(p^{f(d-\ell)}).
\]
\end{theorem}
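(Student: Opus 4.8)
I would prove the two inequalities separately; both are read off from the Kirillov orbit method, available since $p>M$ exceeds the nilpotency class of $\g$. Under it the irreducible representations of $\GG_R$ are parametrised by coadjoint orbits in $\Hom_{\F_p}(\g_R,\F_p)$ with $\dim\rho_{\Or}=|\Or|^{1/2}$, and the central character of $\rho_{\Or}$ is read off from the restriction of any $\lambda\in\Or$ to $\z_R:=Z(\g_R)=\z\otimes R$ (where $\z:=Z(\g)$, the equality holding for $p>M$); since $\GG_R$ is a $p$-group, a representation is faithful exactly when it is so on $Z(\GG_R)=\exp(\z_R)$, so $\mf(\GG_R)=\min_S\sum_{\chi\in S}c_R(\chi)$, the minimum over generating sets $S$ of $\widehat{\exp(\z_R)}$, where $c_R(\chi):=\min\{|\Or_\lambda|^{1/2}:\restr{\lambda}{\z_R}\text{ induces }\chi\}$. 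I would then use that $R\cong\mathcal{O}/\m^d$ is Gorenstein: a primitive $\F_p$-linear map $\tau\colon R\to\F_p$ makes $(a,b)\mapsto\tau(ab)$ perfect, every $\lambda$ equals $\tau\circ\tilde\lambda$ for a unique $R$-linear $\tilde\lambda\colon\g_R\to R$, and $|\Or_\lambda|=p^{\,\mathrm{rank}_{\F_p}B_\lambda}$ for the skew form $B_\lambda(x,y)=\tau(\tilde\lambda([x,y]))$. The structural point driving everything is that if $\tilde\lambda$ is valued in $\m^k$ then $B_\lambda$ kills $\g\otimes\m^{d-k}$ on either side, hence is pulled back from $\g_{R/\m^{d-k}}$, and conversely every functional over $\g_{R/\m^{d-k}}$ lifts through multiplication by $\pi^k$ to an $\m^k$-valued functional with an orbit of equal size. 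This yields a cost-preserving identification of the subgroup $F_k\subseteq\widehat{\exp(\z_R)}$ of characters trivial on $\exp(\z\otimes\m^{d-k})$ with $\widehat{\exp(Z(\g_{R/\m^{d-k}}))}$.

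For the \emph{lower} bound, restriction to the deepest layer of the centre gives a surjection $\rho\colon\widehat{\exp(\z_R)}\twoheadrightarrow\widehat{\exp(\z\otimes\m^{d-1})}\cong\widehat{\exp(Z(\g_{\F_q}))}$ with kernel $F_{d-1}$, so a generating set $S$ of the source maps onto a generating set of the target. If $\rho(\chi)=\bar\chi\neq 1$ then the functional realising $c_R(\chi)$ is necessarily surjective onto $R$, and its reduction $\bar\lambda$ modulo $\m$ is a functional over $\g_{\F_q}$ inducing $\bar\chi$; the key lemma I would establish is $\mathrm{rank}_{\F_p}B_\lambda\ge d\cdot\mathrm{rank}_{\F_p}B_{\bar\lambda}$, obtained by lifting a subspace on which $B_{\bar\lambda}$ is nondegenerate to a free $R$-submodule of $\g_R$ and trapping the radical of $B_\lambda$ inside $\bigcap_j\m^j=0$ by rescaling witness vectors by elements of $R$ and using primitivity of $\tau$ on every $\m^j$. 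Hence $c_R(\chi)\ge c_{\F_q}(\bar\chi)^{\,d}$, so $\mf(\GG_R)\ge\min_{S'}\sum_{\bar\chi\in S'}c_{\F_q}(\bar\chi)^{\,d}$ over generating sets $S'$ of $\widehat{\exp(Z(\g_{\F_q}))}$. To finish I would invoke that the proof of Theorem~\ref{thm:mixed} presents $\mf(\GG_{\F_q})=\min_{S'}\sum c_{\F_q}=fg(p^f)$ as being \emph{forced} by a filtration of $\widehat{\exp(Z(\g_{\F_q}))}$ whose successive rank jumps are exactly the coefficients of $g$; the same filtration then forces $\min_{S'}\sum c_{\F_q}^{\,d}=fg(p^{fd})$, because raising the forced costs to the $d$-th power turns $\sum_m fa_m(p^f)^m$ into $\sum_m fa_m(p^{fd})^m$, where $g(T)=\sum_m a_mT^m$.

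For the \emph{upper} bound I would build a generating set of $\widehat{\exp(\z_R)}$ in $e$ blocks indexed by $k=0,1,\dots,e-1$. Take the optimal family $\{\bar\lambda_i\}$ of functionals over $\g_{\F_q}$ from the proof of Theorem~\ref{thm:mixed}, with $|\Or_{\bar\lambda_i}|^{1/2}=q^{m_i}$ ($q=p^f$) and $\sum_i q^{m_i}=fg(p^f)$, and arrange that $B_{\bar\lambda_i}$ is pulled back from a nondegenerate form on a fixed $2m_i$-dimensional quotient of $\g_{\F_q}$. For each $k$, lift $\bar\lambda_i$ to an $\m^k$-valued functional $\lambda_i^{(k)}$ chosen carefully so that $B_{\lambda_i^{(k)}}$ acquires no rank beyond the $\pi^k$-rescaling of (the constant lift of) $B_{\bar\lambda_i}$; then $B_{\lambda_i^{(k)}}$ is pulled back from $\g_{R/\m^{d-k}}$ of exact $\F_p$-rank $(d-k)\,\mathrm{rank}_{\F_p}B_{\bar\lambda_i}$, whence $|\Or_{\lambda_i^{(k)}}|^{1/2}=(p^{f(d-k)})^{m_i}$. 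The characters from the $k$-th block generate the $k$-th graded piece of $\widehat{\exp(\z_R)}$ for the $\m$-adic filtration; since the non-units of $R$ form $pR=\m^e$, the units (levels $0,\dots,e-1$) already generate $(R,+)$, and a routine filtered-module argument shows the $e$ blocks jointly generate $\widehat{\exp(\z_R)}$. Summing the block costs gives
\[
\mf(\GG_R)\ \le\ \sum_{k=0}^{e-1}\sum_i (p^{f(d-k)})^{m_i}\ =\ \sum_{k=0}^{e-1}fg(p^{f(d-k)})\ =\ f\sum_{\ell=0}^{e-1}g(p^{f(d-\ell)}),
\]
the middle step grouping the $\bar\lambda_i$ by the value $m_i$ (there being $fa_m$ of them with $m_i=m$).

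The hard part is the exact bookkeeping of ranks under lifting, on which both halves rely. For the upper bound one must know that the Theorem~\ref{thm:mixed}-optimal family really has the ``nondegenerate on a small quotient'' shape and that it can be lifted so that $B_{\lambda_i^{(k)}}$ picks up nothing at deeper $\m$-adic levels, so that the rank multiplies by precisely $d-k$; and one must check that the $e$ blocks interlock to generate $\widehat{\exp(\z_R)}$ and not merely its associated graded. For the lower bound the delicate point is that the minimum computing $\mf(\GG_{\F_q})$ is genuinely dictated by a filtration whose rank jumps are stable under taking $d$-th powers of costs — equivalently, that the polynomial $g$ of Theorem~\ref{thm:mixed} literally records these jumps — which has to be extracted from the combinatorics behind that theorem. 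Clearing these two points is where essentially all the work sits; the same scheme would yield Conjecture~\ref{conjj} if one could show in addition that the layered construction above is optimal for every $R$, rather than only up to the lower-order terms by which the two sides of Theorem~\ref{thm:upperbound} differ when $e>1$.
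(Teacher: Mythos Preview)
Your architecture matches the paper's: both halves run through the orbit-method formula (Proposition~\ref{Algorithm-R}) and compare ranks of the commutator form over $R$ with ranks over the residue field.

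For the lower bound you are on the paper's track. Your rank lemma $\mathrm{rank}_{\F_p}B_\lambda\ge d\cdot\mathrm{rank}_{\F_p}B_{\bar\lambda}$ is exactly the paper's $x_{i,d}\ge x_{i,1}^d$, and the passage to a generating set over $\F_q$ is the same. Where you finish by appealing to a ``filtration'' behind Theorem~\ref{thm:mixed}, the paper instead proves the bare combinatorial implication
\[
\sum_i p^{fy_i}\ \ge\ fg(p^f)\quad\Longrightarrow\quad \sum_i p^{fdy_i}\ \ge\ fg(p^{fd})
\]
by a direct case analysis against the coefficients of $\hat g$, using $p>l_1+l_2$. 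Your version can also be made to work---the reverse-lex-minimal $\mu$ with $\mathbf X_\mu(\F_q)\neq\varnothing$ in the proof of Theorem~\ref{thm:mixed} simultaneously minimises $\sum q^{a_i}$ and $\sum q^{da_i}$---but this is not literally a filtration, and you should say so rather than gesture at one.

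The upper bound, however, has a genuine gap. You say one should lift $\bar\lambda_i$ to $\lambda_i^{(k)}$ ``chosen carefully so that $B_{\lambda_i^{(k)}}$ acquires no rank beyond the $\pi^k$-rescaling'', and you correctly flag this as where the work sits---but you supply no mechanism, and there is no elementary one. In commutator-matrix language the task is: given $\mathbf a\in\F_q^{\oplus m}$ with $\mathrm{rk}_{\F_q}F_\g(\mathbf a)=2k$, produce $\tilde{\mathbf a}\in\mathcal O^{\oplus m}$ lifting $\mathbf a$ with all $(2k{+}1)$-minors of $F_\g(\tilde{\mathbf a})$ vanishing in $\mathcal O$. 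This is a lifting problem for the $\F_q$-points of an affine $\Z$-scheme that is not smooth along the relevant stratum, so ordinary Hensel does not apply. The paper's device is to encode ``rank exactly $2k$'' as the system $X\,F_\g(\mathbf a)\,Y=D^{(k)}$ with $X,Y$ invertible, and then invoke a non-smooth Hensel lemma (Proposition~\ref{prop:filterhensel}, due to Ax--Kochen and Greenberg): for $p$ larger than a bound depending only on the system, every solution over $\mathcal O/\mathfrak p$ refines to one over $\mathcal O$. This is the missing tool in your sketch, and it is also what makes the constant $M$ in Theorem~\ref{thm:upperbound} potentially larger than the one coming from Theorem~\ref{thm:mixed}. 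Once you have the $\mathcal O$-lift, the paper obtains your $e$ blocks simply by rescaling with $\varpi^0,\dots,\varpi^{e-1}$, and the resulting $\F_p$-basis of $(\mathcal O/\mathfrak p^e)^{\oplus l_1}$ is immediate, so your worry about ``interlocking to generate rather than merely the associated graded'' does not arise.
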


\begin{remark}
Note that the above theorem establishes Conjecture~\ref{conjj} in the case $e=1$. In this case, the rings $R$  are of the form $\mathcal O/\mathfrak p^d$ where $\mathcal O$ is the ring of integers of a finite unramified extension of $\Q_p$. 
\end{remark}

The second result, Theorem~\ref{partial-order-Lie}, establishes the conjecture for groups associated to \emph{pattern Lie algebras}.
Before stating Theorem~\ref{partial-order-Lie} 
we recall some definitions. 
Let $\prec$ be a partial order on the set $[n]:=\{1, 2,\dots , n\}$. To this partial order, we assign the $\Z$-Lie 
subalgebra $\g_\prec$ of $ \mathfrak{gl}(n,\Z)$ that is spanned by the $e_{ij}$ satisfying $i\prec j$, where the $e_{ij}$ denote the standard basis vectors of $ \mathfrak{gl}(n,\Z)$. 
Many nilpotent Lie algebras of interest (e.g.,~nilradicals of parabolic subalgebras of semisimple Lie algebras) are pattern Lie algebras. 

Without loss of generality we assume that $\prec$ is compatible with the usual ordering of $[n]$, so that  elements of $\g_\prec$  are strictly upper triangular matrices.   For any $i\prec j$ define 
\begin{equation}
\label{eq:alphai,j}
\alpha(i,j):=\#\{k\in [n]: i\prec k\prec j\}.
\end{equation}
A pair $(i,j)$ with $i\prec j$ is called \emph{extreme} if $i$ is minimal and $j$ is maximal with respect to $\prec$. The set of extreme pairs will be denoted by 
$\Iex$. \begin{theorem}\label{partial-order-Lie}
Let $\g:=\g_\prec$ with $\g_\prec$ as above, and let $R$ be a finite truncated valuation ring with associated parameters $(d,p,e,f)$ such that  
\[
p>\max\{\alpha(i,j): i\prec j\}+1.
\]
Then
\begin{equation}
\mf(\GG_R)=\sum_{\ell=0}^{e-1 }\sum_{(i,j)\in \Iex} fp^{f(d-\ell)\alpha(i,j)}.
\end{equation}
\end{theorem}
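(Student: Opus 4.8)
The plan is to compute $\mf(\GG_R)$ by combining the upper bound from Theorem~\ref{thm:upperbound} with a matching lower bound obtained from an explicit construction of a faithful representation, exploiting the very special structure of pattern Lie algebras attached to a partial order. First I would recall the general machinery from \cite{BMS19}: by the Lazard correspondence and Karpenko--Merkurjev, $\mf(\GG_R) = \mathrm{ed}_\C(\GG_R)$, and a faithful representation of minimal dimension can be assembled as a direct sum of irreducibles whose central characters jointly separate points of the center; via the orbit method over a finite ring each such irreducible has dimension controlled by the index of a polarizing subalgebra, i.e. by $\sqrt{|\g_R / \mathrm{rad}(B_\lambda)|}$ where $B_\lambda$ is the alternating form attached to a functional $\lambda$ on $\g_R$. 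So the problem reduces to: identify the minimal set of coadjoint-type functionals needed to detect all of $\ZZ(\g_R)$, and for each compute the associated antisymmetrized-form rank. This is exactly the "Heisenberg" reduction used in the previous paper.

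Next I would analyze the center and commutator structure of $\g_\prec \otimes R$. For a partial order $\prec$ compatible with $[n]$, the center of $\g_\prec$ over a field is spanned by the $e_{ij}$ with $(i,j)$ extreme (i.e. $i$ minimal, $j$ maximal, and additionally $i \prec j$ with no intermediate relations forcing $e_{ij}$ into the commutator in a way that kills centrality — one must check $[e_{ij}, e_{kl}]$ carefully, but minimality of $i$ and maximality of $j$ make the bracket vanish). Over $R = \Or/\p^d$ the center acquires a filtration by powers of $\m$, contributing the sum over $\ell = 0, \dots, e-1$ and the scaling by $p^{f\ell}$ in the exponent, which is precisely the mechanism already visible in Theorem~\ref{thm:upperbound}; the factor $f$ out front is the usual $[\F_{p^f}:\F_p]$ contribution from viewing an $\F_{p^f}$-vector space over $\F_p$. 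For each extreme pair $(i,j)$ I would show that a generic functional supported on $e_{ij}$ has radical of codimension exactly $2\alpha(i,j)$: the off-diagonal slots $e_{ik}$ and $e_{kj}$ for $i \prec k \prec j$ pair up non-degenerately via $[e_{ik}, e_{kj}] = e_{ij}$, and everything else lies in the radical because of the extremity hypothesis. This gives irreducible dimension $p^{f\alpha(i,j)}$ over $\F_{p^f}$, hence $fp^{f\alpha(i,j)}$ over $\C$ after accounting for the field-of-definition, and summing over extreme pairs and over the $\m$-layers yields the claimed formula as an upper bound. The hypothesis $p > \max\{\alpha(i,j)\} + 1$ ensures all the relevant binomial coefficients in the BCH formula are invertible and the orbit method applies cleanly.

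For the lower bound I would argue that the extreme-pair functionals are forced: any faithful representation must contain, for each extreme pair $(i,j)$ and each layer $\ell$, an irreducible constituent whose central character is nontrivial on the corresponding line in $\ZZ(\g_R)$, because distinct extreme pairs span independent directions in the center and the center's $\m$-adic layers are likewise independent. Then a dimension lower bound for such a constituent comes from the fact that its restriction to an appropriate Heisenberg subquotient $\mathrm{Heis}$ generated by $\{e_{ik}, e_{kj} : i \prec k \prec j\}$ together with $e_{ij}$ must be faithful on the center of that subquotient, and a faithful-on-center representation of a Heisenberg group over $\Or/\p^d$ built on a $2\alpha(i,j)$-dimensional symplectic space has dimension at least $p^{f(d-\ell)\alpha(i,j)}$. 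Since these subquotients for different extreme pairs are "independent enough" (their symplectic spaces live in disjoint coordinate blocks, so the dimensions multiply within a single irreducible and add across the minimal generating set of irreducibles), one gets $\mf(\GG_R) \geq \sum_{\ell}\sum_{(i,j)\in\Iex} fp^{f(d-\ell)\alpha(i,j)}$, matching the upper bound.

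The main obstacle I anticipate is the lower bound bookkeeping: showing that one genuinely needs a \emph{separate} irreducible constituent for each extreme pair and each $\m$-layer, rather than cleverly packing several central directions into one representation of smaller total dimension. This requires a careful argument that the relevant Heisenberg subquotients are "orthogonal" — that the symplectic forms attached to different extreme pairs do not interact, so that a single irreducible can be faithful on at most one extreme line per layer without paying the full dimension cost for each. In \cite{BMS19} this kind of statement is handled by the "almost faithful" / minimal-faithful-set formalism and a counting of commutator relations; here the disjointness of coordinate blocks for distinct extreme pairs (a consequence of $i$ minimal and $j$ maximal) should make the argument go through, but verifying that the $\m$-adic layers also decouple — i.e. that the contribution from layer $\ell$ cannot be absorbed into layer $\ell' \neq \ell$ — will need the explicit structure of $\ZZ(\g_\prec \otimes \Or/\p^d)$ as a module over $R$ and a matching lower bound for Heisenberg groups over truncated valuation rings, which I would either cite from the $e=1$ analysis underlying Theorem~\ref{thm:upperbound} or prove directly by a Stone--von Neumann-type argument adapted to $\Or/\p^d$.
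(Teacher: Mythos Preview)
Your upper-bound construction (functionals supported on a single extreme coordinate $e_{ij}$, with radical of codimension $2\alpha(i,j)$, summed over extreme pairs and over the $\mathfrak m$-layers) is essentially the paper's Lemma~\ref{cons-faith} and Lemma~\ref{level-lemma}. Two small corrections: the factor $f$ is not a field-of-definition descent --- each irreducible already has dimension $p^{f(d-\ell)\alpha(i,j)}$ over $\C$; the $f$ arises because you need $f$ such irreducibles (one per $\F_p$-basis vector of $\mathcal O/\mathfrak p$) for each extreme pair and each level $0\le \ell\le e-1$, matching the minimal number $ef\cdot\#\Iex$ of generators of $\ZZ(\GG_R)$. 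Also, invoking Theorem~\ref{thm:upperbound} directly would lose the sharp bound $p>\max\alpha(i,j)+1$, since that theorem carries an unspecified constant $M$; the explicit construction is what yields the stated hypothesis.

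The genuine gap is in the lower bound. Your key mechanism --- that the symplectic forms for distinct extreme pairs ``live in disjoint coordinate blocks, so the dimensions multiply within a single irreducible'' --- is false. Two extreme pairs $(i_0,j_1)$ and $(i_0,j_2)$ sharing a minimal element constrain overlapping variables $x_{i_0k}$ for $k$ with $i_0\prec k\prec j_1$ and $i_0\prec k\prec j_2$ (see the linear system~\eqref{Linear-eq2}), so an irreducible whose central character hits both directions need not have dimension at least the product of the individual lower bounds. Consequently the ``full dimension cost for each'' claim cannot be justified by orthogonality, and a Stone--von~Neumann argument on Heisenberg subquotients will not rescue it.

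The paper's lower bound is combinatorial rather than symplectic. One knows (Lemma~\ref{central-span-faith}(ii)) that a minimal faithful $\rho$ splits into exactly $ef\cdot\#\Iex$ irreducibles whose restricted central characters form an $\F_p$-basis of $\widehat{\Omega}_1(\ZZ(\GG_R))\cong(\mathcal O/\mathfrak p^e)^{\oplus\#\Iex}$. The crucial step you are missing is a Laplace-expansion partition (Lemma~\ref{Lablace-identity}): any basis of $V^{\oplus k}$ can be partitioned into $k$ blocks of size $\dim V$ whose projections to the respective summands are bases of $V$. This assigns each irreducible to \emph{one} extreme pair $(r,s)$ in such a way that the $(r,s)$-components of the characters in that block form an $\F_p$-basis of $\mathcal O/\mathfrak p^e$. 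Then Proposition~\ref{level-prop-Min} bounds each term below by $p^{f(d-\lev(\mathbf a_{(r,s)}))\alpha(r,s)}$, and a second combinatorial inequality (Lemma~\ref{ineq}, applied to the level distribution of a basis of $\mathcal O/\mathfrak p^e$) shows that the block sum dominates $f\sum_{\ell=0}^{e-1}p^{f(d-\ell)\alpha(r,s)}$. Neither of these two steps appears in your sketch, and they are precisely what replaces the failed disjointness.
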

Special cases of Theorem~\ref{partial-order-Lie} were proved in~\cite{BMS19} (for $R=\F_q$) and~\cite{BMS} (for Heisenberg and unitriangular groups over general $R$). 
The Heisenberg Lie algebra (which is the nilradical of the parabolic subalgebra of $\mathfrak{gl}_n$ with Levi factor $\mathfrak{gl}_1\oplus \mathfrak{gl}_{n-2}\oplus \mathfrak{gl}_1$)  corresponds to the partial order
\[
1\prec2,3,\ldots,n-1\prec n
,\]
and the unitriangular Lie algebra corresponds to the total order $1\prec\cdots\prec n$.

We conclude this paper by computing $\mf(\GG_R)$ when $\g\cong \m_{n,c}$, where $\m_{n,c}$ is the free metabelian nilpotent Lie algebra of class $c$ on $n$ generators. 
Recall that
\[
\m_{n,c}:= \f_{n,c}/ [ [\f_{n,c}, \f_{n,c}], [\f_{n,c}, \f_{n,c}]],
\]
where $\f_{n,c}$ denotes the free nilpotent $\Z$-Lie algebra of class $c$ on $n$ generators.
Indeed $\m_{n,c}$ is the largest metabelian quotient of 
$\f_{n,c}$. The following theorem is a generalization of 
\cite[Theorem 2.15]{BMS}, which corresponds to the special case $\m_{2,c}$. Special cases of $\m_{3,5}, \m_{3,6}$ and $\m_{n,4}$ for all $n \ge 2$ over finite fields were computed by Tielker \cite{Tielker}.

\begin{theorem}\label{meta}
Set $\g :=\m_{n,c}$ where $n,c \ge 2$. Then for $p$ sufficiently large we have   
\[
\mf(\GG_R)=
(c-1) {n+c-2 \choose c}
\sum_{\ell=0}^{e-1}
  fp^{f(d-\ell)}.
\]

\end{theorem}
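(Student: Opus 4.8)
The plan is to identify the relevant polynomial inside the partition of Theorem~\ref{thm:mixed}, extract the upper bound from Theorem~\ref{thm:upperbound}, and prove a matching lower bound by hand. First I set up the structure: write $\g=\m_{n,c}=V\oplus D$ with $V=\langle x_1,\dots,x_n\rangle$ and $D=[\g,\g]=\bigoplus_{k=2}^{c}D_k$ the (abelian) derived subalgebra graded by word length; via the adjoint action $D$ is a module over $\mathrm{Sym}(V)=\Z[x_1,\dots,x_n]$ generated by $D_2=\wedge^2 V$, and is identified with the degree $\le c$ part of the Koszul syzygy module $\ker\!\big(\mathrm{Sym}(V)\otimes V\to\mathrm{Sym}(V)\big)$. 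Each $D_k$ has the standard basis of basic left-normed commutators of weight $k$, so $\dim_\Z D_c=(c-1)\binom{n+c-2}{c}$; moreover every basic commutator of weight $c$ has the form $\epsilon\,[x_m,w]$ with $\epsilon=\pm1$, $w$ a basic commutator of weight $c-1$, and $m$ no smaller than the last index of $w$ (and $[V,D_{c-1}]=D_c$ already over $\Z$). For $p$ large a Smith-normal-form / $\mathrm{Tor}$ argument on the maps $D_k\to\bigoplus_m D_{k+1}$ gives $Z(\g_R)=D_c\otimes R$, hence $\mathrm{soc}(\GG_R)=\Omega_1\!\big(Z(\GG_R)\big)=D_c\otimes\mathfrak m^{\,d-e}$, filtered by the submodules $\mathrm{soc}^{(k)}:=D_c\otimes\mathfrak m^{\,d-e+k}$ $(0\le k\le e)$ with $e$ successive quotients each isomorphic to $D_c\otimes\F_q$.

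For the upper bound, Theorem~\ref{thm:upperbound} reduces the inequality $\mf(\GG_R)\le(c-1)\binom{n+c-2}{c}f\sum_{\ell=0}^{e-1}p^{f(d-\ell)}$ to showing that the field polynomial is $g(T)=(c-1)\binom{n+c-2}{c}T$ on every Frobenius piece, i.e.\ $\mf(\GG_{\F_q})=(c-1)\binom{n+c-2}{c}fq$ for all large $p$ and all $f$. The nontrivial half of this is a construction: for each basic commutator $u_J$ of weight $c$ one produces, via the Kirillov correspondence, an irreducible representation of $\GG_{\F_q}$ of dimension $q$ whose central character restricts on $D_c\otimes\F_q$ to a functional unitriangular with respect to the basic-commutator dual basis; letting $J$ run over all such commutators and over an $\F_p$-basis of $\F_q$ yields a family whose central characters span $\mathrm{soc}(\GG_{\F_q})^*$, so the direct sum is faithful of the right dimension. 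Each building block comes from a linear functional $\lambda$ on $\g_{\F_q}$ whose $D_c$-part is a multiple of $u_J^*$ together with correction terms on $D_2,\dots,D_{c-1}$ chosen so that the alternating form $B_\lambda(x,y)=\lambda([x,y])$ has corank exactly $2$; the corrections are forced because the pull-back of $u_J^*$ along $V\otimes D_{c-1}\to D_c$ is in general not a rank-one tensor, and the Koszul presentation of $D$ is what lets one cancel the surplus. Equivalently one may produce, for each $u_J$, a ``thin'' class-$c$ nilpotent quotient $\g\twoheadrightarrow\g_J$ through which $u_J$ survives while all other basic commutators of weight $c$ die, and apply Theorem~\ref{thm:upperbound} to $\g_J$.

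The lower bound is proved directly. If $\bigoplus_i\rho_i$ is faithful then, since $D_c\otimes\mathfrak m^{\,d-e}$ is central and elementary abelian (hence inside the socle), the central characters $\chi_{\rho_i}$ restrict to a spanning set of $\big(D_c\otimes\mathfrak m^{\,d-e}\big)^*$. The crucial estimate is: if the central character of an irreducible $\rho=\rho_\lambda$ is nontrivial on $D_c\otimes\mathfrak m^{\,d-j}$ for some $1\le j\le e$, then $\dim\rho\ge p^{\,f(d-j+1)}$. Indeed, nontriviality forces $\lambda$ to be nontrivial on $u_J\otimes\mathfrak m^{\,d-j}$ for some basic commutator $u_J=\epsilon[x_m,w]$ of weight $c$, so the character $\psi_J(t):=\lambda(u_J\otimes t)$ of $R^{+}$ is nontrivial on $\mathfrak m^{\,d-j}$; since $\Ann_R(\psi_J)$ is an ideal $\mathfrak m^{a}$ with necessarily $a\ge d-j+1$, the pairing $(r,s)\mapsto\psi_J(rs)$ on $R$ has $\F_p$-rank $\dim_{\F_p}(R/\mathfrak m^{a})=fa\ge f(d-j+1)$; by $R$-bilinearity of the bracket this pairing sits inside $B_\lambda$ between the free rank-one $R$-submodules $R(x_m\otimes1)$ and $R(w\otimes1)$, on each of which $B_\lambda$ vanishes, whence $\mathrm{rank}_{\F_p}B_\lambda\ge 2f(d-j+1)$ and $\dim\rho_\lambda=[\g_R:\mathrm{rad}_\lambda]^{1/2}=p^{\,\mathrm{rank}_{\F_p}B_\lambda/2}\ge p^{\,f(d-j+1)}$. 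Partitioning the $\rho_i$ by the largest $k$ for which $\chi_{\rho_i}$ is nontrivial on $\mathrm{soc}^{(k)}$, the spanning condition forces, for each $k$, at least $(c-1)\binom{n+c-2}{c}f(e-k)$ of the $\chi_{\rho_i}$ to be nontrivial on $\mathrm{soc}^{(k)}$, each such $\rho_i$ contributing at least $p^{\,f(d-e+k+1)}$ to $\sum_i\dim\rho_i$; minimizing subject to these constraints — an Abel summation along the filtration — gives exactly $(c-1)\binom{n+c-2}{c}f\sum_{\ell=0}^{e-1}p^{f(d-\ell)}$, which matches the upper bound and proves the theorem.

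The main obstacle is the field-case construction of the second paragraph: forcing the central character to hit a prescribed basic-commutator direction while holding the representation at the minimal dimension $q$ requires functionals spread over several graded components of $\g$, and controlling the ranks of the associated forms means dealing with the Koszul relations among basic commutators of the free metabelian Lie algebra; the parity of $n$ (whether the alternating form on $V$ can be taken nondegenerate) needs a separate treatment. Everything else — the structural facts about $\m_{n,c}$, the identification of the socle, the rank estimate, and the optimization — is routine once that combinatorics is set up.
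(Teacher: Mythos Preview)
Your overall architecture matches the paper's: reduce the upper bound to the finite-field case via Theorem~\ref{thm:upperbound}, and prove the lower bound over a general $R$ by a level/rank estimate combined with a summation inequality along the $\mathfrak m$-adic filtration of the socle. Your lower-bound argument is correct and is essentially the paper's, repackaged: the paper partitions the optimal tuple from Proposition~\ref{Algorithm-R} across the Hall-basis directions of $D_c$ using the Laplace-expansion Lemma~\ref{Lablace-identity}, bounds each summand via the single-entry kernel bound of Lemma~\ref{skew}, and finishes with the Abel-summation Lemma~\ref{ineq}. Your version instead partitions the irreducibles by the deepest filtration level on which their central character survives and uses the same rank-$\ge 2f(d-j+1)$ estimate on $B_\lambda$ coming from a single basic commutator $u_J=[x_m,w]$; the optimisation is identical.

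The genuine gap, which you correctly flag as the ``main obstacle'', is the field case $\mf(\GG_{\F_q})=(c-1)\binom{n+c-2}{c}\,fq$. You propose two routes---building, for each basic commutator $u_J$ of weight $c$, a functional $\lambda_J$ with $B_{\lambda_J}$ of rank exactly $2$ and central character unitriangularly hitting $u_J^\ast$, or equivalently passing to a ``thin'' one-dimensional-centre quotient $\g_J$---but neither is carried out, and your remark about parity of $n$ suggests you are aiming at a construction in which $B_\lambda\!\mid_V$ plays a role, which would indeed be delicate. The paper sidesteps all of this with a single uniform parametric family. Writing $\widetilde F_\g(\T)$ for the $\Hall(n,1)\times\Hall(n,c-1)$ block of the commutator matrix, one sets
\[
T_{(i_1,\dots,i_c)}:=\Bigl(\prod_{k=1}^{c-2}\alpha_{i_k}\Bigr)\,
\begin{vmatrix}\alpha_{i_{c-1}}&\alpha_{i_c}\\ \lambda_{i_{c-1}-1}&\lambda_{i_c-1}\end{vmatrix}
\]
in free parameters $\alpha_1,\dots,\alpha_n,\lambda_0,\dots,\lambda_{n-1}$ (with $\lambda_0=0$, $\lambda_1=1$). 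Two short computations then finish the job: first, for \emph{every} choice of parameters the matrix $\widetilde F_\g(\T)$ is a rank-one outer product $(\alpha_{i})_i\cdot(\beta_\j)_\j^{\,t}$ (this is a direct calculation from the explicit form of $\Lambda_{\i\j}$ in Lemma~\ref{comcal} together with a $2\times 2$ determinant identity), so $F_\g(\T)$ has rank exactly $2$; second, the $T_\i$ are linearly independent as polynomials in the $\alpha$'s and $\lambda$'s. A Schwartz--Zippel-type argument (Lemma~\ref{combi}) then produces, over any $\F_q$ with $q\ge c$, a full set of $\#\Hall(n,c)$ linearly independent specialisations $\T\in\F_q^{\#\Hall(n,c)}$ with $\mathrm{rk}_{\F_q}F_\g(\T)=2$, and Theorem~\ref{Algorithm} yields $\mf(\GG_{\F_q})=\#\Hall(n,c)\cdot fq$. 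No case distinction on $u_J$, no Koszul bookkeeping, no parity issue.

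In short: your lower bound is fine; your field-case upper bound is a plan without the key construction, and the missing idea is to exhibit a \emph{parametrised} rank-one family for $\widetilde F_\g$ whose coordinate functions $T_\i$ are linearly independent, rather than targeting each $u_J$ individually.
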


Let us now explain the key ideas that are used in the present paper. 
Similar to our previous work \cite{BMS19}, we heavily use the Kirillov orbit method for finite $p$-groups, as well as  the notion of the commutator matrix associated to a nilpotent Lie algebra, defined originally by Grunewald and Segal \cite{Grunwald-Segal}. Another important tool is a theorem from \cite{BMS19} which relates the faithful dimension to the question of existence of rational points in general position on certain rank varieties. However, the proof of  Theorem~\ref{thm:mixed} requires two new ideas: first, a theorem of Ax from his seminal work \cite{Ax2} (see also \cite{vandenDries}) which roughly states that any elementary statement in the language of finite fields is equivalent to one about single-variable polynomials; second, a theorem of Dedekind about factorization of the reduction modulo $p$ of a monic polynomial with integer coefficients.
In addition, in the proof of Theorem~\ref{thm:upperbound} we need
a (not so well known) variant  of the multivariate Hensel's lemma
that does not require  any smoothness or genericity assumption (see Proposition~\ref{prop:filterhensel}).
This variant was first established by Ax and Kochen~\cite{AxKochenI} using ultrafilter methods (for further references, see Section~\ref{Sec:up}).

\subsection*{Acknowledgement}
The authors thank Emmanuel Breuillard, Jamshid Derakhshan, Julia Gordon, Zinovy Reichstein, and Christopher Voll
for stimulating conversations, and the anonymous referees for reading the paper carefully. The research of Hadi Salmasian
was partially supported 
by an NSERC Discovery Grant (RGPIN-2018-04044).

\section{Polynomiality along all finite fields}
\label{Sec:polyfinfil}
In this section we will prove Theorem \ref{thm:mixed}. 
 We begin with recalling a result of  Ax from model theory of finite fields.
Let ${\mathcal{Q}}:=\{p^f\,:\,p\text{ is prime and }f\in\mathbb N\}$ be the set of all prime powers. 
For $n \ge 1$, denote by ${\mathcal{B}_n}$ the Boolean algebra on ${\mathcal{Q}}$ generated by the 
subsets  
\[Z_{h_1,\ldots,h_k}:= \{ q\in\mathcal Q:  
\text{ the equations }h_i(T_1,\ldots,T_n)=0, 1\leq i\leq k, \text{ have a solution in }\F_q \},  \]
where $h_1,\ldots,h_k$ range over all polynomials in $\Z[T_1, \dots, T_n]$.
Also, let $\mathcal B$ be the Boolean algebra generated by 
finite subsets of $\mathcal Q$
and 
the sets $Z_h$ where $h\in\Z[T_1]$. 
In \cite[Theorem 11]{Ax2}, Ax proved that for all $n \ge 1$, we have ${\mathcal{B}_n}\subseteq \mathcal{B}$. This statement generalizes~\cite[Theorem 1]{Ax}, which proves a similar assertion for primes. 
It also follows from a result of van den Dries 
in~\cite[Subsec. (2.3)]{vandenDries}. 

In order to prove 
Theorem~\ref{thm:mixed} we need
 a refinement of the aforementioned result of Ax (see Proposition~\ref{lemma:mixed}). This refinement may be well known to the experts in the field (see e.g.~ \cite[Theorem 11']{Ax2} for a somewhat similar statement), but we were unable to locate the required formulation  in the literature. For this reason and for convenience of the reader we will supply a proof. We would like to thank  J. Derakhshan for communicating the main steps of this proof to us.

 Let us recall some basic facts from algebraic number theory. 
Let $h(T)\in\Z[T]$ be a monic  polynomial. 
The discriminant of $h(T)$ is defined to be
$$
\mathrm{Disc}_h:=\prod_{1\leq i<j\leq n}(\alpha_i-\alpha_j)^2,$$
where $\alpha_1,\dots,\alpha_n$ are the roots of $h(T)$. Note that since $h(T)$ is monic, $\mathrm{Disc}_h \in \Z$. 
We denote the splitting field of $h(T)$ by $E$, that is, $E:=\Q(\alpha_1,\dots,\alpha_n)$, and  the ring of algebraic integers of $E$ by $\mathcal O_E$.

Let $p$ be a prime number that does not divide $\mathrm{Disc}_h$.  Then $p$ is unramified in every simple extension $\Q(\alpha_i)$ (see \cite[Theorem 3.24]{Marcus}), hence by \cite[Theorem 4.31]{Marcus} it is also unramified in the compositum of the $\Q(\alpha_i)$, that is, in $E$. 
Therefore to every prime ideal $\mathfrak q$ of $\mathcal O_E$ that lies over $p\Z$ (i.e.,~$\mathfrak q\cap \Z=p\Z$) one can associate a Frobenius automorphism 
\cite[Theorem 4.32]{Marcus}, 
 which is the unique element $\sigma_{\mathfrak q}\in \mathrm{Gal}(E/\Q)$
that satisfies
\[
\sigma_\mathfrak{q} (\alpha)\equiv \alpha^p \mod\mathfrak{q}\quad\text{ for all }\alpha\in \mathcal O_E.
\]
The $\sigma_\mathfrak{q}$  form a conjugacy class of $\mathrm{Gal}(E/\Q)$, 
denoted by the Artin symbol $\left( \frac{E/\Q}{p} \right)$. 

Denote the reduction of $h(T)$ modulo $p$ by $\bar{h}(T)$. Since $p\nmid\mathrm{Disc}_h$,
the roots of $\bar{h}(T)$ are also simple. 
For the proof of the next theorem, 
see~\cite[Theorems 4.37 and 4.38]{Jacobson}.
\begin{theorem}[Dedekind]\label{Dedekind}
Let $h(T)$ and $p$ be as above. Suppose 
\begin{equation}\label{dedekind}
\bar{h}(T)=h_1(T)\cdots h_r(T)
\end{equation}
is the factorization of $\bar{h}(T)$ into irreducible polynomials over $\mathbb{F}_p$. Set $n_i:=\deg h_i$ for $1\leq i\leq r$. Let  $\mathfrak{q}$ be a prime ideal of $\mathcal O_E$ that lies over $p\Z$.
Then the cycle decomposition of $\sigma_\mathfrak{q}$, when viewed as a permutation of the roots of $h(T)$, is of the form $\varsigma_1\cdots \varsigma_r$ with each $\varsigma_i$ a cycle of length $n_i$.
\end{theorem}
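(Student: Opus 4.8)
The plan is to transport the whole question into the residue field $k:=\mathcal O_E/\mathfrak q$, where it becomes elementary Galois theory of finite fields. Write $n:=\deg h$, so that $h(T)=\prod_{i=1}^n(T-\alpha_i)$ because $h$ is monic and $E$ is its splitting field, and let $\pi\colon\mathcal O_E\to k$ be the reduction map, with $\bar\alpha_i:=\pi(\alpha_i)$. Reducing the factorization of $h$ gives $\bar h(T)=\prod_{i=1}^n(T-\bar\alpha_i)$ in $k[T]$. The first step is to check that the $\bar\alpha_i$ are pairwise distinct: one has $\pi(\mathrm{Disc}_h)=\prod_{1\le i<j\le n}(\bar\alpha_i-\bar\alpha_j)^2$, and $\pi(\mathrm{Disc}_h)\ne 0$ because $\mathrm{Disc}_h\in\Z$ is prime to $p$ while $\pi$ restricts on $\Z$ to the surjection $\Z\to\F_p\hookrightarrow k$. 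Hence $\{\bar\alpha_1,\ldots,\bar\alpha_n\}$ is exactly the set of roots of $\bar h$ in $k$, without repetition, so $i\mapsto\bar\alpha_i$ is a bijection from the roots of $h$ onto the roots of $\bar h$; in particular $\bar h$ is separable, a fact used below.

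Next I would record how $\sigma_\mathfrak q$ interacts with $\pi$. As a $\Q$-automorphism of $E$ it permutes the roots of $h$, say $\sigma_\mathfrak q(\alpha_i)=\alpha_{\tau(i)}$ for a permutation $\tau$ of $[n]$. Applying the defining congruence $\sigma_\mathfrak q(\alpha)\equiv\alpha^p\ (\mathrm{mod}\ \mathfrak q)$ to $\alpha\in\mathfrak q$ shows $\sigma_\mathfrak q(\mathfrak q)\subseteq\mathfrak q$, hence $\sigma_\mathfrak q(\mathfrak q)=\mathfrak q$ since both are maximal, so $\sigma_\mathfrak q$ descends to an automorphism $\bar\sigma_\mathfrak q$ of $k$ fixing $\F_p$; and the congruence says precisely that $\bar\sigma_\mathfrak q$ is the $p$-power Frobenius $\Frob_p$ of $k$. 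Applying $\pi$ to $\sigma_\mathfrak q(\alpha_i)=\alpha_{\tau(i)}$ then yields $\bar\alpha_{\tau(i)}=\Frob_p(\bar\alpha_i)$. Thus, under the bijection of the first step, the permutation $\tau$ induced by $\sigma_\mathfrak q$ on the roots of $h$ corresponds to the permutation induced by $\Frob_p$ on the roots of $\bar h$, and it remains only to compute the cycle type of the latter.

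The last step is the standard correspondence between the irreducible factors of $\bar h$ over $\F_p$ and the $\Frob_p$-orbits on its roots. Since $\bar h$ is separable, each factor $h_i$ is separable; its roots form a subset $S_i\subseteq k$ with $|S_i|=n_i$, and the $S_i$ partition the set of roots of $\bar h$. For $\beta\in S_i$ we have $\F_p(\beta)\cong\F_{p^{n_i}}$, whose Galois group over $\F_p$ is cyclic and generated by the restriction of $\Frob_p$; this group acts transitively on the roots of the irreducible polynomial $h_i$, so $S_i$ is a single $\Frob_p$-orbit. Conversely, for any root $\beta$ of $\bar h$, if $O$ denotes its $\Frob_p$-orbit then $\prod_{\gamma\in O}(T-\gamma)$ is fixed by $\Frob_p$, hence lies in $\F_p[T]$, is irreducible, and divides $\bar h$; so $O=S_i$ for the index $i$ with $\beta\in S_i$, whence $|O|=n_i$. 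Therefore the $\Frob_p$-orbits on the roots of $\bar h$ are exactly $S_1,\ldots,S_r$, of sizes $n_1,\ldots,n_r$; transporting back along the bijection, the cycle decomposition of $\sigma_\mathfrak q$ acting on the roots of $h$ is $\varsigma_1\cdots\varsigma_r$ with each $\varsigma_i$ a cycle of length $n_i$, as claimed.

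There is no serious obstacle here: the argument is a chain of routine reductions. The only points that genuinely require the hypothesis $p\nmid\mathrm{Disc}_h$ are the injectivity of $\pi$ on the roots of $h$, which makes the bijection of the first step possible, and the separability of $\bar h$, which makes the correspondence between orbits and factors in the last step exact; both can fail when $p\mid\mathrm{Disc}_h$. Alternatively, one could invoke the decomposition-group description of $\sigma_\mathfrak q$ from \cite[Theorem 4.32]{Marcus} in place of the short ideal-theoretic argument in the second step.
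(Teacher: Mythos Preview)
Your argument is correct. The paper does not actually supply its own proof of this theorem; it refers to \cite[Theorems 4.37 and 4.38]{Jacobson} and adds a short Remark explaining that Jacobson (following Tate) works instead in the order $\mathcal O_h=\Z[\alpha_1,\ldots,\alpha_n]$ and with a prime $\mathfrak p\subset\mathcal O_h$, and that the statement for $\mathcal O_E$ and $\mathfrak q$ follows by setting $\mathfrak p=\mathfrak q\cap\mathcal O_h$ and invoking uniqueness of the Frobenius. Your write-up carries out the same standard reduction to Frobenius orbits on the roots of $\bar h$ directly in $\mathcal O_E/\mathfrak q$, which is slightly cleaner: you never need the auxiliary ring $\mathcal O_h$, and your short ideal-theoretic check that $\sigma_\mathfrak q(\mathfrak q)=\mathfrak q$ replaces the appeal to the uniqueness statement in the paper's Remark. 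The mathematical content is the same in both routes.
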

\begin{remark}
Theorem~\ref{Dedekind} is not stated in~\cite{Jacobson} exactly in the given form. 
Jacobson proves (by an argument originally due to Tate) 
that for every prime ideal $\mathfrak{p}$ of $\mathcal O_h:=\Z[\alpha_1,\ldots,\alpha_n]$ that lies over $p\Z$, there exists a unique $\sigma_{\mathfrak{p}}\in\mathrm{Gal}(E/\Q)$ such that for all $\alpha\in \mathcal O_h$ one 
has $
\sigma_\mathfrak p(\alpha)\equiv\alpha^p\mod\mathfrak{p}
$, 
and furthermore the restriction of $\sigma_\mathfrak{p}$ to the roots of $h(T)$ has the desired cycle decomposition.  Now if $\mathfrak q\subset \mathcal O_E$ is a prime ideal that lies over $p\Z$, then 
$\sigma_\mathfrak{q}
$ also satisfies
$
\sigma_\mathfrak q(\alpha)\equiv\alpha^p\mod\mathfrak{p}
$ for $\alpha\in \mathcal O_h$, where $\mathfrak p:=\mathfrak q\cap \mathcal O_h$. Thus by the uniqueness of $\sigma_\mathfrak{p}$ we obtain $\sigma_{\mathfrak p}=\sigma_{\mathfrak q}$.

\end{remark}

\begin{lemma}
\label{lem:Sp(h)=...}Let $h(T)$, $p$, $\sigma_\mathfrak q$, and $n_i$ for $1\leq i\leq r$ be as in Theorem~\ref{Dedekind}. 
Denote by $S_p(h)$ the set of integers $f \ge 1$ such that $ \bar{h}(T)$ has a root in $ \mathbb{F}_{p^f}$. 
Then
\[
S_p(h)= 
\bigcup_{ i=1}^{  r } n_i \mathbb N=
\{ k \ge 1: \fix( \sigma_\mathfrak{q}^k) \neq \varnothing \},
\] 
where $\fix( \sigma_\mathfrak{q}^k)$ is the set of fixed points of 
$\sigma_\mathfrak{q}^k$, viewed as
a permutation  of the roots of $h(T)$. 
\end{lemma}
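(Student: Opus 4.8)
The plan is to prove the two equalities separately, both of which are essentially unwinding definitions combined with the classical description of roots of polynomials over finite fields.

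\medskip

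\noindent\textbf{First equality.} To show $S_p(h)=\bigcup_{i=1}^{r} n_i\Z$, I would argue as follows. Since $p\nmid \mathrm{Disc}_h$, the factorization $\bar h(T)=h_1(T)\cdots h_r(T)$ into irreducibles over $\F_p$ has distinct factors, and the roots of $\bar h$ in $\overline{\F_p}$ are exactly the roots of the $h_i$. A root of an irreducible polynomial $h_i$ of degree $n_i$ over $\F_p$ generates the field $\F_{p^{n_i}}$ over $\F_p$, so it lies in $\F_{p^n}$ if and only if $\F_{p^{n_i}}\subseteq \F_{p^n}$, i.e. if and only if $n_i\mid n$. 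Hence $\bar h$ has a root in $\F_{p^n}$ precisely when $n_i\mid n$ for at least one $i$, which says exactly $n\in\bigcup_{i=1}^r n_i\Z$. (Here I would silently fix the typo in the displayed formula, writing $\bigcup_{i=1}^{r} n_i\Z$ in place of $\bigcup_{n=1}^{r} n_i\Z$, and likewise clean up the self-referential phrase ``$\fix(\sigma_\mathfrak q^n)$ denotes the set of fixed points of $\fix(\sigma_\mathfrak q^n)$''.)

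\medskip

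\noindent\textbf{Second equality.} To show $\bigcup_{i=1}^r n_i\Z = \{n\ge 1 : \fix(\sigma_\mathfrak q^n)\ne\varnothing\}$, I would invoke Theorem~\ref{Dedekind}: the permutation $\sigma_\mathfrak q$ of the $n=\deg h$ roots of $h(T)$ decomposes into disjoint cycles $\varsigma_1\cdots\varsigma_r$ with $\varsigma_i$ of length $n_i$. A general fact about permutations in disjoint-cycle form is that the $n$-th power of a cycle of length $\ell$ has a fixed point (in fact fixes the whole cycle) precisely when $\ell\mid n$, and otherwise it acts as a product of cycles of length $\ell/\gcd(\ell,n)>1$ on that orbit, hence with no fixed point there. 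Therefore $\sigma_\mathfrak q^n$ has a fixed point among the roots of $h$ if and only if $n_i\mid n$ for some $i$, which is again the condition $n\in\bigcup_{i=1}^r n_i\Z$. Combining the two equivalences yields the chain of equalities in the statement.

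\medskip

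\noindent I do not anticipate a serious obstacle here: the lemma is a bookkeeping step that packages Dedekind's theorem into a statement about the arithmetic progressions $n_i\Z$, which is the form needed later when intersecting with the arithmetic-progression partition $\{\mathscr F_j\}$ of $\mathbb N$. The only mild care required is to make sure the roots are genuinely simple (so that ``root of $\bar h$'' and ``root of some $h_i$'' coincide and the permutation action of $\mathrm{Gal}(E/\Q)$ on the $n$ roots is well-defined), which is exactly guaranteed by the hypothesis $p\nmid\mathrm{Disc}_h$ recorded just before the statement.
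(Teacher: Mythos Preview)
Your proposal is correct and matches the paper's own proof essentially line for line: the paper also derives the second equality directly from the cycle decomposition in Dedekind's theorem (a power of a product of disjoint cycles of lengths $n_i$ has a fixed point iff some $n_i$ divides the exponent), and proves the first equality by the same subfield argument, namely that a root of an irreducible $h_i$ of degree $n_i$ lies in $\F_{p^n}$ iff $\F_{p^{n_i}}\subseteq\F_{p^n}$ iff $n_i\mid n$. The only cosmetic difference is the order in which the two equalities are treated.
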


\begin{proof}
From the cycle decomposition of $\sigma_\mathfrak{q}$ in Theorem~\ref{Dedekind} it follows immediately that  
$\sigma_\mathfrak{q}^k$ has a fixed point if and only if $n_i|k$ for some $1\leq i\leq r$. This proves the second equality. 
Thus it remains to verify  that $S_p(h)=\bigcup_{i=1}^r n_i\mathbb N$. 

First observe that for each $ 1 \le i \le r$, 
the quotient  $ \mathbb{F}_p[x]/ ( h_i(x) )$ is a field of order $p^{n_i}$ in which  $\bar h(x)$ has a root. 
Recall that  $k|l$ if and only if  
$\mathbb{F}_{p^l}$ has a subfield  isomorphic to 
$\mathbb{F}_{p^k}$. Thus if $k \in n_i \mathbb N$ for some $1 \le i \le r$, then $\bar h(x)$ has a root in $\mathbb{F}_{p^k}$. This proves $S_p(h)\supseteq\bigcup_{i=1}^r n_i\mathbb N$. 
For the reverse inclusion, assume that $ \bar h(x)$ has a root in $\mathbb{F}_{p^k}$. It follows that there exists $ 1 \le i \le r$ such that $h_i(x)$ has a root in
$\mathbb{F}_{p^k}$. Call this root $ \alpha$. Since $h_i(x)$ is irreducible over $ \mathbb{F}_p$, it follows that the subfield $\mathbb{F}_p(  \alpha)$ has $p^{n_i}$ elements. Thus $\F_{p^{n_i}}$ is isomorphic to a subfield of $\mathbb{F}_{p^k}$, and consequently $n_i$ divides $k$. 
\end{proof}

Define an equivalence relation $\sim$ on $\mathrm{Gal}(E/\Q)$ by
\[
\tau_1\ \sim\ \tau_2\text{ iff }\tau_1=\sigma\tau_2^j\sigma^{-1}
\text{ for some $\sigma\in\mathrm{Gal}(E/\Q)$,}
\]
where $\mathrm{gcd}(j,\mathrm{ord}(\tau_2))=1$. An equivalence class of $\sim$ is called a division, or an {\it Abteilung}.  
Following \cite{Lagarias}, for any division $C\subseteq\mathrm{Gal}(E/\Q)$ we consider the set
\[ \E(C) := \left\{  p \ :\    \left( \frac{E/\Q}{p} \right)  \subseteq C \right\}.
\]
In the following proposition, 
let $S(h;M)$ for $M>0$ and a monic polynomial $h\in\Z[T]$  denote the set of pairs $(p,f)$ where $p>M$ is prime, $f\geq 1$, and 
the reduction modulo $p$ of  $h(T)$ has a root 
in  $\F_{p^f}$.

\begin{proposition}\label{lemma:mixed}
Let $h(T)\in\Z[T]$ be monic. 
Then there exists $M_\circ>0$, only depending on $h(T)$, such that  $S(h;M_\circ)=\bigcup_{i=1}^r\mathcal P_i\times \mathcal N_i$ for some $r\in\mathbb N$, where the $\mathcal P_i$ and the $\mathcal N_i$ satisfy the following properties:
\begin{itemize}
\item[(a)]
The $\mathcal P_i$ form a partition of the set of primes bigger than $M_\circ$ into Frobenius sets, and  
\item[(b)]
Each $\mathcal N_i$  is a finite union of sets of the form $k\mathbb N$ for $k\in\mathbb N$. 
\end{itemize} 

\end{proposition}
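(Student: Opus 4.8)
The plan is to reduce the statement about pairs $(p,f)$ to the single-variable classification provided by Dedekind's theorem and the theory of Frobenius automorphisms of the splitting field $E$ of $h$. First I would set $M_\circ$ to be large enough that $p>M_\circ$ guarantees $p\nmid\mathrm{Disc}_h$; then by the discussion preceding Theorem~\ref{Dedekind} every such $p$ is unramified in $E$ and has a well-defined Artin symbol $\left(\frac{E/\Q}{p}\right)$, a conjugacy class in $G:=\mathrm{Gal}(E/\Q)$. By Lemma~\ref{lem:Sp(h)=...}, for a fixed such $p$ the set of $f$ with $(p,f)\in S(h;M_\circ)$ equals $\{f\ge 1:\fix(\sigma_\mathfrak q^f)\ne\varnothing\}$, where $\sigma_\mathfrak q$ is any element of $\left(\frac{E/\Q}{p}\right)$; moreover this set is $\bigcup_{i=1}^r n_i\Z$, a finite union of sets of the form $k\mathbb N$, as required for property (b). The crucial observation is that this set of $f$ depends only on the \emph{division} (Abteilung) of $\sigma_\mathfrak q$: if $\tau_1=\sigma\tau_2^j\sigma^{-1}$ with $\gcd(j,\mathrm{ord}(\tau_2))=1$, then $\tau_1$ and $\tau_2$ generate the same cyclic subgroup up to conjugacy, so $\fix(\tau_1^f)\ne\varnothing \iff \fix(\tau_2^f)\ne\varnothing$ for every $f$ (the cycle-type of a power of a permutation is determined by the cyclic group it generates together with how many cycles of each length there are, and passing from $\tau_2$ to $\tau_2^j$ with $j$ coprime to the order permutes cycles of equal length among themselves).

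Next I would enumerate the finitely many divisions $C_1,\dots,C_r$ of $G$ and, for each $i$, define
\[
\mathcal P_i:=\E(C_i)\setminus\{p\le M_\circ\}=\left\{p>M_\circ:\left(\tfrac{E/\Q}{p}\right)\subseteq C_i\right\},
\qquad
\mathcal N_i:=\{f\ge 1:\fix(\sigma^f)\ne\varnothing\text{ for }\sigma\in C_i\},
\]
the latter being well-defined by the division-invariance just noted. Since every $p>M_\circ$ is unramified and its Artin symbol is a single conjugacy class contained in exactly one division, the $\mathcal P_i$ partition the primes above $M_\circ$; discarding any $i$ with $\mathcal P_i=\varnothing$ we get a genuine partition. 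By construction $S(h;M_\circ)=\bigcup_{i=1}^r\mathcal P_i\times\mathcal N_i$: if $(p,f)\in S(h;M_\circ)$ then $p$ lies in the unique $\mathcal P_i$ with $\left(\frac{E/\Q}{p}\right)\subseteq C_i$, and $f\in\mathcal N_i$ by Lemma~\ref{lem:Sp(h)=...}; conversely if $p\in\mathcal P_i$ and $f\in\mathcal N_i$ then $\sigma_\mathfrak q^f$ has a fixed point among the roots of $h$, hence $\bar h$ has a root in $\F_{p^f}$.

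It remains to check property (a), namely that each $\mathcal P_i$ is a Frobenius set, i.e.\ lies in the Boolean algebra generated by the sets $V_g$. This is where I expect the only real subtlety. The point is that for $p>M_\circ$, Dedekind's theorem translates the cycle type of $\left(\frac{E/\Q}{p}\right)$ into the degree sequence of the irreducible factorization of $\bar h$ over $\F_p$, and the condition ``$\bar h$ has an irreducible factor of degree exactly $m$'' is a Boolean combination of the conditions ``$\bar h$ has a root in $\F_{p^m}$'' (equivalently $p\in V_{g_m}$ where $g_m(T)$ is a single-variable polynomial whose splitting field over $\F_p$ being $\F_{p^m}$ encodes this — concretely one can take a resultant/norm polynomial built from $h$ and the minimal polynomial of a generator of $\F_{p^m}$, or argue via Ax's theorem $\mathcal B_n\subseteq\mathcal B$ that the set of $p$ with a prescribed factorization type is in $\mathcal B$ and contains no infinite set of prime powers with $f>1$ beyond what is forced, hence is determined by its prime part which is Frobenius). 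Since a division of $G$ is determined by the multiset of cycle lengths of its elements (this is exactly the data ``$\mathcal N_i$'' plus multiplicities), membership $p\in\mathcal P_i$ is equivalent to $\bar h$ having a prescribed factorization type, which is a finite Boolean combination of the sets $V_{g_m}$, hence $\mathcal P_i$ is a Frobenius set. The main obstacle is making precise that divisions of $G$ correspond bijectively, via Dedekind, to factorization types of $\bar h$ that are ``Boolean-definable from roots'', and that the residual finitely many small primes $p\le M_\circ$ can be absorbed since finite sets are already in the Boolean algebra $\mathcal B$; once that dictionary is set up the rest is bookkeeping.
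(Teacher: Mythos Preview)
Your overall architecture matches the paper's: partition the primes by the division containing their Artin symbol, let $\mathcal N_i$ be the common value of $S_p(h)$ on each piece, and invoke Lemma~\ref{lem:Sp(h)=...}. The paper does exactly this, and for the remaining step --- that each $\E(C_i)$ is a Frobenius set --- simply cites \cite[Theorem 1.1]{Lagarias}.

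Your attempted direct proof of that last step has a genuine gap. The claim ``a division of $G$ is determined by the multiset of cycle lengths of its elements'' is false. Take $h$ with splitting field $E=\Q(\sqrt{2},\sqrt{3})$, so $G\cong(\Z/2\Z)^2$ acts on the four roots of the minimal polynomial of $\sqrt{2}+\sqrt{3}$. All three nontrivial elements have cycle type $(2,2)$, yet they lie in three distinct divisions (the group is abelian, and no two generate the same cyclic subgroup). Hence Dedekind's theorem alone, applied to $h$, cannot separate the $\E(C_i)$ from one another by factorization type of $\bar h$; Lagarias's argument requires auxiliary polynomials beyond $h$ itself. Your parenthetical suggestion to build a single $g_m$ ``from $h$ and the minimal polynomial of a generator of $\F_{p^m}$'' is not well-posed either, since that minimal polynomial depends on $p$.

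There is an easy repair that stays within your framework: do \emph{not} partition by divisions. Partition instead by the value of $S_p(h)$ itself (equivalently, by the antichain of divisibility-minimal cycle lengths of $\sigma_{\mathfrak q}$). This is coarser than the division partition, each part still has a well-defined $\mathcal N_i$ by construction, and membership of $p$ in a part is now a first-order property of $\F_p$ (``for each $m\le\deg h$, $\bar h$ does/does not have an irreducible factor of degree dividing $m$''), hence a Frobenius set by Ax's theorem --- precisely the alternative you allude to. With that modification your argument goes through; as written, the division-to-cycle-type dictionary you rely on does not exist.
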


\begin{proof}
Without loss of generality we can assume that $h(T)$ is irreducible over $\Q$.
As before let $E$ denote the splitting field of $h(T)$. We denote  the divisions of $\mathrm{Gal}(E/\Q)$ by  $C_1, \dots, C_r$. 
We set $M_\circ:=1+|\mathrm{Disc}_h|$. First we show that 
if $ p, p'>M_\circ $ are primes such that \[
\left( \frac{E/\Q}{p} \right)\cup\left( \frac{E/\Q}{p'} \right) 
\subseteq C_i
\text{ for some $ 1 \le i \le r$,}
\]
 then  $S_p(h)= S_{p'}(h)$. 
Choose $\sigma_\mathfrak q\in \left( \frac{E/\Q}{p} \right)$ and $\sigma_{\mathfrak{q}'}\in \left( \frac{E/\Q}{p'} \right)$. 
Then  
$\sigma_{\mathfrak{q}'}= \tau \sigma_{\mathfrak{q}}^j \tau^{-1}$, where
$\mathrm{gcd}(j,\mathrm{ord}(\sigma_{\mathfrak{q} }))=1$. 
By Lemma~\ref{lem:Sp(h)=...}, if
 $ n \in S_{p}(h)$ then 
$\fix(\sigma_{{\mathfrak{q}} }^n)\neq\varnothing$, hence also $\fix(\sigma_{{\mathfrak{q}} }^{nj})\neq\varnothing$. Since $\sigma_{\mathfrak{q}'}^n$ is conjugate to $\sigma_{{\mathfrak{q}} }^{nj}$, it follows that $\fix(\sigma_{\mathfrak{q}'}^n)\neq \varnothing$, so that $n\in S_{p'}(h)$. 
This proves $S_p(h)\subseteq S_{p'}(h)$, and by symmetry we have the reverse inclusion, hence $S_p(h)=S_{p'}(h)$.

From what we just proved it follows that $S_p(h)$ depends only on the index $1 \le i\le r$ for which $\left( \frac{E/\Q}{p} \right) \subseteq C_i$. 
It is shown in \cite[Theorem 1.1]{Lagarias} that every set of primes that
differs by only a finite set from
 $\E(C)$ for some division $C$ is a Frobenius set.\footnote{We remark that Lagarias uses different terminology. In particular, 
in \cite{Lagarias}
our Frobenius sets are called SPC sets, the sets $\E(C)$ are called elementary Frobenius sets, and the sets in the Boolean algebra generated by the $\E(C)$ for divisions $C$ are called Frobenius sets.} Thus we can set $\mathcal P_i:=\E(C_i)$ for $1\leq i\leq r$. 
The existence of the $\mathcal N_i$ now follows from  Lemma~\ref{lem:Sp(h)=...}.
\end{proof}
\begin{remark}
\label{rmk:QMbij}
Let $\mathcal Q_1\subseteq \mathcal Q$ denote the set of prime numbers. Then $S(h,M)$ is the inverse image of $Z_h\cap \mathcal Q^M$, where $Q^M:=\{p^f\in\mathcal Q\,:\,p>M\}$, under the bijection $\mathcal Q_1\times \mathbb N\to \mathcal Q$ given by $(p,f)\mapsto p^f$.
\end{remark}

For the proof of Theorem~\ref{thm:mixed} we need one more ingredient: a result from our previous work~\cite{BMS19} which relies on the Kirillov orbit method for finite 
$p$-groups. To state this result, we need to recall the notion of the \emph{commutator matrix} of a nilpotent $\Z$-Lie algebra. 

Given
a finitely generated abelian group $\Gamma$,
we call a subset $S\subseteq \Gamma$  a \emph{semibasis} if it represents a basis over $\Z$ of the free abelian group 
$\Gamma/\Gamma_\mathrm{tor}$, where $\Gamma_\mathrm{tor}$ denotes the subgroup of torsion elements of $\Gamma$. Clearly $\#S=\rank_\Z\Gamma$, where $\rank_\Z\Gamma$ is the rank of $\Gamma$ as a $\Z$-module. 

Now let $\g$ be a nilpotent $\Z$-Lie algebra which is finitely generated as an abelian group. Let $\ww_1,\ldots,\ww_{l_1}$ be a semibasis of $\ZZ(\g)\cap[\g ,\g]$. Let  $\ww_{l_1+1},\ldots,\ww_{m}$ be
 elements of 
$\g $ that represent a semibasis of 
$[\g,\g]/(\ZZ(\g)\cap [\g,\g])$. Finally, let $\zz_1,\ldots,\zz_{l_2}$ be elements of $\g$ that represent a semibasis of $\ZZ(\g)/(\ZZ(\g)\cap [\g,\g])$.
Then the vectors
$\{\ww_1,\ldots,\ww_m,\zz_1,\ldots,\zz_{l_2}\}$ are $\Z$-linearly independent and  $\rank_\Z(\ZZ(\g)+[\g,\g])=l_2+m$. 

Let $\vv'_1,\ldots,\vv'_n$ be elements of $\g$ that represent a semibasis of $\g/\ZZ(\g)$. 
For $1\leq i<j\leq n$, there exist integers $\eta_{ij}^k$, $l_1+1\leq k\leq m$, such that
if we set \[
\vv'_{i,j}:=\left(
[\vv'_i, \vv'_j]-\sum_{k=l_1+1}^m\eta^k_{ij} \ww_k
\right),
\]
then 
$\vv'_{i,j}+
([\g,\g]\cap \ZZ(\g))
\in([\g,\g]/[\g,\g]\cap \ZZ(\g))_\mathrm{tor}$. 
Set $K$ 
equal to 
 the exponent of the group
$\left(
[\g,\g]/[\g,\g]\cap\ZZ(\g)\right)_\mathrm{tor}$.  Then
$K\vv'_{i,j}\in[\g,\g]\cap\ZZ(\g)$ for  $1\leq i<j\leq n$. Now set
$\vv_i:=K\vv'_i$ for $1\leq i\leq n$. Then 
there exist integers $\lambda_{ij}^k$ such that
\begin{equation*}
[\vv_i, \vv_j]-\sum_{k=1}^m\lambda^k_{ij} \ww_k\in
\big([\g,\g]\cap\ZZ(\g)\big)_\mathrm{tor}
\text{ for  $1\leq i<j\leq n$}.
\end{equation*}
We remark that $\lambda_{ij}^k=K^2\eta_{ij}^k$ for
$l_1+1\leq k\leq m$. We set $\lambda_{ii}^k:=0$, and for $i>j$ we define $\lambda_{ij}^k:=-\lambda_{ji}^k$. 
Finally, for $1\leq i, j\leq n$ we define 
the linear forms
\begin{equation*}
\Lambda_{ij}(T_1,\dots,T_m):= \sum_{k=1}^m \lambda_{ij}^k T_k\in \Z[T_1,\dots,T_m]. 
\end{equation*}
It is clear that $\Lambda_{ii}=0$ and $\Lambda_{ij}=-\Lambda_{ji}$ for $1\leq i,j \le n$. The {\it commutator matrix of $\g$}
(relative to the chosen ordered basis) is the $n\times n$ skew-symmetric matrix of linear forms defined by 
\begin{equation}\label{commutating-matrix}
F_\g(T_1,\dots, T_m):=[ \Lambda_{ij}(T_1, \dots, T_m)]_{1 \le i,j \le n }
. 
\end{equation}
See~\cite{Grunwald-Segal,Voll04,Voll05,Voll,Avni,Stasinski-Voll} as other references for $F_\g$.
In the following theorem, $\rank_{\F_q}(\cdot)$ denotes the rank of a matrix with entries in the field $\F_q$.

For a given $\g$ we set $\mathcal{SB}_\g:=(\mathbf w_1,\ldots,\mathbf w_m,\mathbf z_1,\ldots,\mathbf z_{l_2},\mathbf v'_1,\ldots,\mathbf v'_n)$. This $(m+l_2+n)$-tuple of vectors is the semibasis data that uniquely determines $F_\g$. For an isomorphism class of $\g$ we can fix this semibasis data, thereby fixing the commutator matrix $F_\g$ whenever needed.

\begin{theorem}
\label{Algorithm}
Let $\g$ be a nilpotent $\Z$-Lie algebra which is finitely generated as an abelian group.  
Then there exists a constant $M>0$, depending only on $\g$  and the choice of $\mathcal{SB}_\g$, such that for every prime power $q:=p^f$ with  $p>M$ and $f\ge 1$ we have 
\begin{equation}
\label{eq:mfaithAlg}
\mf(\GG_q)=\min\left\{ 
\sum_{\ell=1}^{l_1} fq^{\frac{\rank_{\F_q}(F_\g(x_{\ell 1},\dots,x_{\ell m}))}{2}}: \begin{pmatrix}
x_{11} & \dots & x_{1l_1}\\
\vdots  & \ddots & \vdots\\
 x_{l_1 1} & \dots & x_{l_1 l_1}
\end{pmatrix}\in\mathrm{GL}_{l_1}(\F_q)\right\}+fl_2,
\end{equation} 
where $m:=\rank_\Z([\g,\g])$, 
$l_1:=\rank_\Z([\g,\g]\cap\ZZ(\g))$ and $l_2:=\rank_\Z(\ZZ(\g)/\ZZ(\g)\cap[\g,\g])$. 
\end{theorem}
\begin{proof}
This is proved in~\cite[Theorem 5.3]{BMS19}.
   \end{proof}

\begin{remark}
\label{rmk-boolean-prop}
(a)  Let $X\subseteq \mathbb N$ belong to the Boolean algebra generated by arithmetic progressions. Then $X$ can be expressed as a disjoint union of arithmetic progressions (note that any singleton $\{a\}\subset\mathbb N$ is also considered an arithmetic progression). This can be seen as follows. Set $A_{a,b}:=\{a+kb\,:\,k\in\Z^{\geq 0}\}$ where $a\in\mathbb N$ and $b\in\mathbb Z^{\geq 0}$. Then
 $A_{a,b}\cap A_{a',b'}$ is either the empty set, or a singleton, or a set of the form $A_{a'',b''}$ where $b''$ is the least common multiple of $b$ and $b'$. 
Furthermore, \[
 \mathbb N\setminus A_{a,b}=\bigcup_{1\leq a'\neq a\leq b}A_{a',b}\cup\bigcup_{\tiny\begin{array}{c}a''\equiv a\text{ mod }b\\
1\leq a''<a \end{array}}\{a''\}.\] Finally, by De Morgan's Law the case of  $A_{a,b}\cup A_{a',b'}$ can be reduced to the cases of intersection and complement. From these observations it follows that every subset of $\mathbb N$ that is obtained by finite intersection, complements, and finite unions is expressible as a disjoint union of the sets $A_{a,b}$.

(b) 
Recall that $\mathcal Q_1\subseteq \mathcal Q$ denotes the set of prime numbers. Suppose that  $X\subseteq \mathcal Q_1\times \mathbb N$ belongs to the Boolean algebra generated by product sets  $\mathcal P\times \mathcal F$ where $\mathcal P\subseteq \mathcal Q_1$ is a Frobenius set and $\mathcal F\subseteq\mathbb N$ is in the Boolean algebra generated by arithmetic progressions.
Then $X$ can be expressed as a union of the generators $\mathcal P\times \mathcal F$. This is a consequence of part (a) and the following relations:
\begin{itemize}
\item[(i)]
$(\mathcal P\times\mathcal F)\cap (\mathcal P'\times \mathcal F')=(\mathcal P\cap\mathcal P')\times (\mathcal F\cap \mathcal F')$.
\item[(ii)] $(\mathcal Q_1\times\mathbb N)\setminus(\mathcal P\times \mathcal F)=((\mathcal Q_1\setminus \mathcal P)\times \mathcal F)\cup
(\mathcal Q_1\times (\mathbb N\setminus \mathcal F))\cup ((\mathcal Q_1\setminus \mathcal P)\times (\mathbb N\setminus \mathcal F))$.  
\end{itemize}
Note that as in (a), the union $(\mathcal P\times \mathcal F)\cup (\mathcal P'\times \mathcal F')$
can be expressed by intersection and complement.
\end{remark}

\begin{lemma}
\label{lem:XxY}
Let $\mathscr X$ and $\mathscr Y$ be two arbitrary sets, and let
$\{\mathscr Z_i\}_{i\in \mathcal I}$ be a partition of $\mathscr X\times \mathscr Y$ into 
finitely many sets such that $\mathscr Z_i=\bigcup_{j=1}^{N_i} X_{i,j}\times Y_{i,j}$ for $i\in\mathcal I$, where $N_i\in \mathbb N$. Then there exist partitions 
$\{X_k\}_{k\in \mathcal I_X}$ of $\mathscr X$ and 
$\{Y_l\}_{l\in \mathcal I_Y}$ of $\mathscr Y$, where $|\mathcal I_X|,|\mathcal I_Y|<\infty$,  such that the following hold.
\begin{itemize}
\item[\rm (a)] The $X_k$ belong to the Boolean algebra generated by the $X_{i,j}$. 
\item[\rm (b)] The $Y_l$ belong to the Boolean algebra generated by the $Y_{i,j}$.
\item[\rm (c)]
Each $\mathscr Z_i$ is a union of Cartesian product sets of the form  $X_k\times Y_l$.
\end{itemize}
\end{lemma}

\begin{proof}
Every finite Boolean algebra is isomorphic to the power set of a finite set. From this statement it follows that   there exists a finite family of  disjoint sets 
$\mathscr F_\mathscr X:=\{X_k\}_{k\in\mathcal I_X}$  in 
 the Boolean algebra generated by the $X_{i,j}$ such that each $X_{i,j}$ can be expressed as the union of elements of a subset of  
$\mathscr F_{\mathscr X}$. Similarly, there   
exists a finite family of  disjoint sets 
$\mathscr F_\mathscr Y:=\{Y_l\}_{l\in\mathcal I_Y}$  in the  Boolean algebra generated by the $Y_{i,j}$ such that each $Y_{i,j}$ can be expressed as the union of elements of a subset of  
$\mathscr F_{\mathscr Y}$. It is clear that each $\mathscr Z_i$ is a union of the sets $X_k\times Y_l$. Since $\bigcup_{i\in\mathcal I}\mathscr Z_i=\mathscr X\times \mathscr Y$, we have $\bigcup_{i\in\mathcal I}\bigcup_{1\leq j\leq N_i}X_{i,j}=\mathscr X$ and $\bigcup_{i\in\mathcal I}\bigcup_{1\leq j\leq N_i}Y_{i,j}=\mathscr Y$, hence $\bigcup_{k\in\mathcal I_X}X_k=\mathscr X$ and $\bigcup_{l\in \mathcal I_Y}Y_k=\mathscr Y$. 
\end{proof}

\begin{proof}[Proof of Theorem \ref{thm:mixed}]
Let $p$ be a prime number and set $q:=p^f$ for some $f\geq 1$.  Since $F_\g(T_1,\dots,T_m)$ is a skew-symmetric $n\times n$ matrix where $n:=\rank_\Z(\g/\ZZ(\g))$,  the rank of $F_\g(x_1,\dots,x_m)$ for $x_1,\ldots,x_m\in\F_q$ is an even number no larger than $n$. Let $\mathcal M$ be the set of all integer vectors $\mu=(a_1,\dots,a_{l_1})\in\Z^{l_1}$ satisfying $0\leq a_1\leq \cdots \leq a_{l_1}\leq n/2$. To each $\mu \in\mathcal  M$ we assign a polynomial 
$g_\mu(T)$
defined by
$$
g_\mu(T):=T^{a_1}+\dots+T^{a_{l_1}}+l_2.
$$
We order the vectors $\mu$ according to  the {\it reverse lexicographical order}  $\lhd$, defined by $\mu \lhd \mu'$ if the rightmost non-zero component of the vector $\mu'-\mu$ is positive.  If $\mu\lhd\mu'$ and $q>l_1$, then we can easily see that
\begin{equation}\label{ine-g}
g_\mu(q)<g_{\mu'}(q).
\end{equation}
We sort elements of $\mathcal M$ as 
$\mu_1\lhd\dots\lhd\mu_N$, where $N:= \# \mathcal M$.
For   $\mu=(a_1,\ldots,a_{l_1})\in\mathcal M$, we define an affine variety \begin{equation*}
{\bf X}_\mu:=\left\{(x_{ij})\in \mathrm{Mat}_{l_1\times m}(\C):  \rank_\C F_\g(x_{i1},\dots, x_{im}) =2 a_i\text{ for }
1\leq i\leq l_1,\ 
\det\left(x_{ij}\right)_{1\leq i,j\leq l_1}\neq 0
\right\}.
\end{equation*}
Note that the non-vanishing condition on the determinant can be turned into an equation by introducing a new variable, standing for the inverse of the determinant. We also remark that ${\bf X}_\mu$ is defined over $\Z$ because $F_\g(T_1,\dots,T_m)$ is an integer matrix. 
For $\mu\in \mathcal M$ set
$$
\Sigma_\mu:=\left\{p^f\,:\,p\text{ is prime},\
p>\max\{l_1,M_1\},\ f\geq1,\ \text{ and }{\bf X}_\mu(\F_{p^f})\neq \varnothing\right\},
$$
where  $M_1$ is the lower bound constant of   Theorem~\ref{Algorithm}.  
Theorem~\ref{Algorithm} and~\eqref{ine-g} imply that
$\mf(\GG_{p^f})=fg_{\mu_k}(p^f)$
for the value of $k$ that satisfies 
$$p^f\in\mathscr{Q}_k:=\Sigma_{\mu_k}\setminus \bigcup_{1\leq i<k}\Sigma_{\mu_i}.$$

Recall that
$\mathcal Q$ denotes the set of prime powers.
By Ax's theorem~\cite[Theorem 11]{Ax2} stated in the beginning of this section, 
the sets $\mathscr{Q}_k$ can be expressed in terms of unions, intersections, and complements of finitely many  of the generators of the Boolean algebra $\mathcal B$, that is,  finite subsets of $\mathcal Q$ and
the sets $Z_h$ for $h(T)\in\Z[T]$.  
Recall that for $M>0$ we set \[
\mathcal Q^{M}:=\{p^f\in\mathcal Q\,:\,p>M\}.
\] For a non-constant polynomial $h(T)\in\Z[T]$, 
there exists a monic polynomial $h_1(T)\in\Z[x]$ and a constant $M_h>0$ such that $Z_h\cap \mathcal Q^{M_h}=Z_{h_1}\cap \mathcal Q^{M_h}$ 
(we obtain $h_1$ from $h$ by a suitable change of variable $T\mapsto T/T_0$ and then clearing the denominators, so that  it suffices to choose $M_h>|T_0|$).
It follows that for 
$M'$ sufficiently large, the sets $\mathscr Q_k\cap \mathcal Q^{M'}$ are in the Boolean algebra of subsets of $\mathcal Q^{M'}$
that is generated by the sets $Z_h\cap \mathcal Q^{M'}$ for monic $h(T)\in\Z[T]$. 
Proposition~\ref{lemma:mixed}, Remark~\ref{rmk:QMbij}
and Remark~\ref{rmk-boolean-prop}(b) imply that there exists
a constant $M(\g)$ such that
each of the sets $\tilde{\mathscr{Q}}_k:=\left\{(p,f)\,:\,p^f\in\mathscr Q_k\text{ and }p>M(\g)\right\}$
for $1\leq k\leq N$ is a finite union of Cartesian products $\mathcal P\times \mathcal N$ where $\mathcal P$  is a Frobenius set and $\mathcal N$ is  an arithmetic progression.
The existence of the desired partitions $\mathscr P_1,\ldots,\mathscr P_r$ and $\mathscr F_1,\ldots,\mathscr F_s$ now follows from 
Lemma~\ref{lem:XxY} where we set $\mathscr X:=\mathcal Q^{M(\g)}$, $\mathscr Y:=\mathbb N$, and $\mathscr Z_k:=\tilde{\mathscr D}_k$ for $1\leq k\leq N$.
\end{proof}
The proof of Theorem~\ref{thm:mixed} has the following consequence, which we will need when considering $\mf(\GG_R)$ for finite truncated valuation rings $R$.

\begin{corollary}
\label{cor:mixed}
Let $\g$, $\{\mathscr P_i\}_{i=1}^r$,  
$\{\mathscr F_i\}_{i=1}^s$, 
and the $g_{ij}(T)$ 
be as in 
Theorem~\ref{thm:mixed}. Fix $1\leq i_\circ\leq r$ and $1\leq j_\circ\leq s$. Set $\mathscr P:=\mathscr P_{i_\circ}$, 
$\mathscr F:=\mathscr F_{j_\circ}$, and $g(T):=g_{i_\circ j_\circ}(T)$. 
Then $g(T)=l_2+\hat{g}(T)$, where $\hat{g}(T)\in\Z[T]$, such that the following statements hold.
\begin{itemize}
\item[(a)]

$\hat{g}(T)=\sum_{i=0}^{\deg(\hat{g})}\hat{g}_iT^i$ where the $\hat{g}_i$ are non-negative integers and
 $\sum_{i=0}^{\deg(\hat{g})}\hat{g}_i=l_1$.\item[(b)]
For any $q:=p^f$ where $(p,f)\in\mathscr P\times \mathscr F$, there exist $l_1$ vectors  $\mathbf a_i\in\F_q^{\oplus m}$, $1\leq i\leq l_1$, such that their projections on the first $l_1$ components form an $\F_q$-basis of $\F_q^{\oplus l_1}$ and 
\[
\#\left\{i\,:\,
1\leq i\leq l_1\text{ and }\mathrm{rk}_{\F_q}(F_\mathfrak g(\mathbf a_i))=2j
\right\}=\hat{g}_j\quad 
\text{for }0\leq j\leq \deg(\hat{g}).
\]  
\end{itemize}
\end{corollary}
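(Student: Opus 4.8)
The plan is to extract Corollary~\ref{cor:mixed} directly from the machinery built in the proof of Theorem~\ref{thm:mixed}, tracking which polynomial $g_{i_\circ j_\circ}$ arises and from which vector $\mu\in\mathcal M$. Recall that in that proof each relevant polynomial has the shape $g_{\mu}(T)=T^{a_1}+\cdots+T^{a_{l_1}}+l_2$ for some $\mu=(a_1,\ldots,a_{l_1})\in\mathcal M$ with $0\le a_1\le\cdots\le a_{l_1}\le n/2$, and the identity $\mf(\GG_{p^f})=fg_{\mu_k}(p^f)$ holds precisely when $p^f\in\mathscr Q_k$. First I would observe that the partition $\{\tilde{\mathscr Q}_k\}$ of (large-$p$) pairs $(p,f)$ constructed there refines — or can be taken to refine — the product partition $\{\mathscr P_i\times\mathscr F_j\}$: this is exactly the content of the ``general remark'' at the end of that proof, which produces the $\mathscr P_i$ and $\mathscr F_j$ as a common refinement. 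Hence to the fixed pair $(i_\circ,j_\circ)$ there corresponds a single index $k$ with $\mathscr P_{i_\circ}\times\mathscr F_{j_\circ}\subseteq\tilde{\mathscr Q}_k$, and one sets $g:=g_{\mu_k}$, so that $\hat g(T):=T^{a_1}+\cdots+T^{a_{l_1}}$ with $\mu_k=(a_1,\ldots,a_{l_1})$.

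With this identification, part~(a) is immediate: $\hat g$ is a sum of $l_1$ monomials $T^{a_i}$ with $a_i\ge 0$, so writing $\hat g(T)=\sum_{i\ge 0}\hat g_iT^i$ the coefficients $\hat g_i=\#\{i: a_i=j\}$ are non-negative integers summing to $l_1$, and $\deg\hat g=a_{l_1}\le n/2$. For part~(b), the point is simply to unwind the definition of $\mathscr Q_k$ and of the variety $\mathbf X_{\mu_k}$. If $(p,f)\in\mathscr P_{i_\circ}\times\mathscr F_{j_\circ}$ with $p$ large, then $p^f\in\mathscr Q_k\subseteq\Sigma_{\mu_k}$, which by definition means $\mathbf X_{\mu_k}(\F_{p^f})\neq\varnothing$. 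A point of $\mathbf X_{\mu_k}(\F_q)$ is exactly a matrix $(x_{ij})\in\Mat_{l_1\times m}(\F_q)$ whose $i$-th row $\mathbf a_i:=(x_{i1},\ldots,x_{im})$ satisfies $\rank_{\F_q}F_\g(\mathbf a_i)=2a_i$ for each $i$, and whose leading $l_1\times l_1$ minor $\det(x_{ij})_{1\le i,j\le l_1}$ is nonzero — i.e. the projections of the $\mathbf a_i$ to the first $l_1$ coordinates form an $\F_q$-basis of $\F_q^{\oplus l_1}$. Counting how many $i$ give $\rank_{\F_q}F_\g(\mathbf a_i)=2j$ yields exactly $\#\{i:a_i=j\}=\hat g_j$, which is the assertion of~(b).

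One technical wrinkle to address: the variety $\mathbf X_{\mu_k}$ was defined over $\C$ (via conditions $\rank_\C F_\g=2a_i$), whereas we need $\F_q$-points, and the passage between ``$\C$-points exist in the right configuration'' and ``$\F_q$-points exist'' is precisely what Ax's theorem and Proposition~\ref{lemma:mixed} handle for $p$ large — so I would simply invoke that the construction of $\Sigma_{\mu_k}$ and $\mathscr Q_k$ in the proof of Theorem~\ref{thm:mixed} is stated in terms of $\mathbf X_{\mu_k}(\F_{p^f})\neq\varnothing$, making the $\F_q$-statement the native one and requiring no further translation. I expect this to be the only place needing care; everything else is bookkeeping. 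The main (minor) obstacle is making sure the $g_{ij}$ in Theorem~\ref{thm:mixed} can be taken to be literally the $g_{\mu_k}$ rather than some further-manipulated polynomial — this holds because in that proof the value $fg_{\mu_k}(p^f)$ is constant on each $\tilde{\mathscr Q}_k$ and the final partition is a common refinement, so no polynomial is ever modified, only the index sets are regrouped. I would state this explicitly as the first sentence of the proof and then carry out the unwinding above.
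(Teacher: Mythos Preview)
Your proposal is correct and is exactly the approach the paper has in mind: the corollary is stated without proof, preceded only by the sentence ``The proof of Theorem~\ref{thm:mixed} has the following consequence,'' so the intended argument is precisely the unwinding you describe --- identify $g_{i_\circ j_\circ}$ with some $g_{\mu_k}$ via the common-refinement remark, read off part~(a) from the shape $g_{\mu_k}(T)=T^{a_1}+\cdots+T^{a_{l_1}}+l_2$, and for part~(b) use that $\mathscr P_{i_\circ}\times\mathscr F_{j_\circ}\subseteq\tilde{\mathscr Q}_k\subseteq\{(p,f):p^f\in\Sigma_{\mu_k}\}$ to obtain an $\F_q$-point of $\mathbf X_{\mu_k}$. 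One terminological quibble: you write that $\{\tilde{\mathscr Q}_k\}$ refines $\{\mathscr P_i\times\mathscr F_j\}$, but the refinement goes the other way (each $\tilde{\mathscr Q}_k$ is a \emph{union} of products $\mathscr P_i\times\mathscr F_j$); your subsequent inclusion $\mathscr P_{i_\circ}\times\mathscr F_{j_\circ}\subseteq\tilde{\mathscr Q}_k$ is the right one, so this is only a slip in wording.
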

\begin{proof}
From the right hand side of~\eqref{eq:mfaithAlg}
it is clear that $g(T)-l_2$ is expressible as a sum of $l_1$ powers of $q$ with non-negative coefficients. Indeed the coefficient $\hat{g}_i$ is equal to the number of summands $q^i$. This proves (a). The minimum value on the right hand side of~\eqref{eq:mfaithAlg} corresponds to a choice of 
$l_1$ vectors $\mathbf a_1,\ldots,\mathbf a_{l_1}$ that satisfy the assumptions of (b).   
\end{proof}

\section{Bounds for $\mf(\GG_R)$ }
\label{Sec:up}
In this section we turn our attention to $\mf(\GG_R)$, where $R$ is a finite truncated valuation ring. It is known (see for example \cite{McLean}) that any such   $R$ is isomorphic to $\mathcal O/\mathfrak p^d$ for some $d\geq 1$, where 
$\mathcal O$ is the ring of integers of a $p$-adic field and $\mathfrak p$ is the maximal ideal of $\mathcal O$
(thus, in the notation of Section~\ref{sec:intro} we have $\mathfrak{m}=\mathfrak p/\mathfrak p^d$). Henceforth we assume that $R=\mathcal O/\mathfrak p^d$. 

 By analogy with the special case where $R$ is a finite field, one  expects that a suitable variation of  Theorem~\ref{Algorithm} holds for $\mf(\GG_R)$. Indeed the method of proof of Theorem~\ref{Algorithm} can be adapted to prove Proposition~\ref{Algorithm-R} below.
Recall that $p=\mathrm{char}(\mathcal O/\mathfrak p)$, 
$\#\mathcal O/\mathfrak p=p^f$, and  
{$p(\OO{d})=\mathfrak p^e/\mathfrak p^d$}. It follows that $\mathcal O/\mathfrak p^e$ is an $\F_p$-vector space.
We define the map 
\begin{equation}
\label{eq:dfnofprj}
\mathsf{proj}:(\mathcal O/\mathfrak p^d)^{\oplus m}\to (\mathcal O/\mathfrak p^e)^{\oplus l_1}
\end{equation} 
to be the natural projection on the first $l_1$ components, i.e.,  \[\mathsf{proj}(a_1,\ldots,a_m):=(\bar{a}_1,\ldots,\bar{a}_{l_1}),
\] where $a\mapsto \bar a$ denotes the natural projection $\mathcal O/\mathfrak p^d\to \mathcal O/\mathfrak p^e$.

\begin{remark}
\label{rmk:exactseq}
Any finitely generated abelian group $A$ can be expressed as a direct sum $A\cong A_F\oplus A_T$ where $A_F$ is free abelian and $A_T$ is torsion. 
Tensoring $A$ (over  $\mathbb Z$) with a commutative ring $R$ whose characteristic is a prime number larger than the exponent of $A_T$ annihilates $A_T$. Furthermore, if $A$ and $B$ are finitely generated abelian groups and $A\to B$ is an injective group homomorphism, then the induced map $
A_F\to B/B_T\cong B_F
$ is also an injection. In particular a basis of $A_F$ is mapped to a linearly independent set of vectors. After tensoring with $R$, if the charatecteristic of $R$ is a sufficiently large prime, the image of the basis still remains linearly independent (because the determinant of a square minor of maximum size will remain nonzero).  It follows that  a short exact sequence of finitely generated abelian groups will remain exact upon tensoring with a finite truncated valuation ring $R$ as long as $p$ is sufficiently large (depending on the exact sequence).
\end{remark}

In the proof of Theorem \ref{thm:upperbound} we need the following extension of Theorem \ref{Algorithm}.
\begin{proposition}
\label{Algorithm-R}
Let $\g$ be as in Theorem~\ref{Algorithm} and let $R$ be a finite truncated local ring.  Let
$
\mathcal O(d,m,e,l_1)
$
denote the set of $fe l_1$-tuples $(\mathbf a_1,\ldots,\mathbf a_{fe l_1})$ 
such that $\mathbf a_i\in (\mathcal O/\mathfrak p^d)^{\oplus m}$ for all $1\leq i\leq fe l_1$, and the projections $\{\mathsf{proj}(\mathbf a_i)\}_{i=1}^{fe l_1}$ form a basis of the $\F_p$-vector space 
$(\mathcal O/\mathfrak p^e)^{\oplus l_1}$.
Then there exists a constant $M>0$, depending only on $\g$ and the choice of the semibasis data $\mathcal{SB}_\g$ that is used to define $F_\g$, such that for $p>M$ we have
\[
\mf(\GG_R)=fl_2e+
\min
\left\{
\sum_{i=1}^{fe l_1}
\left(
\frac{p^{fd(\mathrm{rk}_\Z(\g) -l_1-l_2)}}{\#\ker_{\mathcal O/\mathfrak p^d}(F_{\g}(\mathbf a_i))}
\right)^\frac12
\ :\ 
(\mathbf a_1,\ldots,\mathbf a_{fe l_1})\in \mathcal O(d,m,e,l_1)\right\}.
\]
\end{proposition}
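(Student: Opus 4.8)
The plan is to mimic the proof of Theorem~\ref{Algorithm} (i.e.~\cite[Theorem 5.3]{BMS19}) step by step, replacing the finite field $\F_q$ throughout by the finite truncated valuation ring $R=\mathcal O/\mathfrak p^d$ and keeping careful track of how cardinalities and ranks of matrices over $R$ behave. The underlying engine is the Kirillov orbit method for the finite $p$-group $\GG_R=\exp(\g_R)$: when $p>M$ is larger than the nilpotency class, the irreducible characters of $\GG_R$ are parametrized by coadjoint orbits in $\g_R^\ast$, and the degree of the character attached to an orbit $\mathcal O$ is $|\mathcal O|^{1/2}$. The faithful dimension is then the minimum, over families of orbits whose joint stabilizers intersect trivially in the relevant central quotient, of the sum of the corresponding degrees. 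So first I would set up the dual space $\g_R^\ast$ with the filtration induced by $\mathfrak p^i$, identify the radical/stabilizer of a linear functional $\psi$ with the kernel of the alternating form $B_\psi(x,y)=\psi([x,y])$ on $\g_R/\ZZ(\g_R)$, and express $|\mathcal O_\psi|$ in terms of $\#\ker_{R}$ of that form. The commutator matrix $F_\g(T_1,\dots,T_m)$ of Section~\ref{Sec:polyfinfil} evaluated at a point $\mathbf a\in R^{\oplus m}$ is exactly this alternating form (restricted to the chosen complement of the center), whence the factor $\bigl(p^{fd(\rank_\Z(\g)-l_1-l_2)}/\#\ker_{\mathcal O/\mathfrak p^d}(F(\mathbf a_i))\bigr)^{1/2}$: the numerator is $\#(\g_R/\ZZ(\g_R))$ written out as $p^{fd\cdot(\text{rank})}$ and the denominator is the stabilizer.

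Next I would handle the faithfulness constraint. A character is faithful on $\GG_R$ iff the associated functional (or rather the family of functionals, since one takes a sum of characters) is non-degenerate on the socle-type piece coming from $[\g_R,\g_R]\cap\ZZ(\g_R)$ and on the complementary central piece $\ZZ(\g_R)/(\ZZ(\g_R)\cap[\g_R,\g_R])$. The second piece is a free $R$-module of rank $l_2$; it must be hit faithfully by characters of a minimal abelian-type subquotient, contributing the clean term $fl_2e$ — here $e$ enters because $\mathcal O/\mathfrak p^e$ is the $\F_p$-span relevant to $p$-torsion and $f$ is the residue degree, so $\dim_{\F_p}(\ZZ(\g_R)/\ldots)$-type counts produce $fe$ per rank-one summand (for $R=\F_q$, i.e.~$e=1$, this recovers the $fl_2$ of Theorem~\ref{Algorithm}). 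For the first piece, which is free of rank $l_1$ over $R$ and carries $p$-torsion structure $(\mathcal O/\mathfrak p^e)^{\oplus l_1}$ as an $\F_p$-space of dimension $fel_1$, I would show that choosing a faithful system of characters amounts to choosing $fel_1$ functionals $\psi_1,\dots,\psi_{fel_1}$ whose restrictions to this piece, after projecting to $\mathcal O/\mathfrak p^e$ via $\mathsf{proj}$, form an $\F_p$-basis of $(\mathcal O/\mathfrak p^e)^{\oplus l_1}$; this is precisely the condition $(\mathbf a_1,\dots,\mathbf a_{fel_1})\in\mathcal O(d,m,e,l_1)$. Then the faithful dimension is $fl_2e$ plus the minimum of $\sum_i |\mathcal O_{\psi_i}|^{1/2}$ over such systems, which is the asserted formula.

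The step I expect to be the main obstacle is the faithfulness bookkeeping over $\mathcal O/\mathfrak p^d$: over a field one only needs a single "layer" of functionals and the linear-independence condition is a genuine $\GL$-condition, but over $R$ the module $[\g_R,\g_R]\cap\ZZ(\g_R)\cong R^{\oplus l_1}$ has a whole flag of submodules $\mathfrak p^i R^{\oplus l_1}$, and a set of characters is faithful iff it detects each layer $\mathfrak p^{i-1}/\mathfrak p^i$. One must argue that it is optimal (never wasteful) to realize faithfulness using characters whose functionals are supported "at the bottom layer" in a way that reduces exactly to an $\F_p$-basis condition on $(\mathcal O/\mathfrak p^e)^{\oplus l_1}$ after the projection $\mathsf{proj}$, rather than needing $d$ times as many characters; the point is that a functional non-degenerate modulo $\mathfrak p$ on a cyclic $R$-module already generates it, so one needs $\dim_{\F_p}$ of the $p$-torsion quotient, which is $fel_1$, not $fdl_1$. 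I would also need to check that the minimum is actually attained and that the lower bound matches — the lower bound comes from the fact that any faithful representation restricted to the center must be faithful there, forcing at least $fel_1+fel_2$ distinct central characters to appear and hence at least that many irreducible constituents with the corresponding minimal degrees. Finally I would invoke $p>M$ (as in Theorem~\ref{Algorithm}) to ensure the Lazard/Kirillov correspondence applies and that denominators like $\#\ker$ of the alternating form are unaffected by small-prime pathologies, and cite \cite{BMS19} for the detailed orbit-counting lemmas that go through verbatim with $\F_q$ replaced by $R$.
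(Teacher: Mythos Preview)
Your proposal is correct and follows essentially the same route as the paper: both adapt the proof of Theorem~\ref{Algorithm} (i.e.\ \cite[Theorem~5.3]{BMS19}) by running the Kirillov orbit method over $R=\mathcal O/\mathfrak p^d$, identifying the degree of the irreducible attached to $\psi_{\mathbf a}$ with $\bigl(p^{fd\,n}/\#\ker_R F_\g(\mathbf a)\bigr)^{1/2}$, and then invoking the general $p$-group fact (Lemma~\ref{central-span-faith}) that a minimal faithful representation decomposes into exactly $\dim_{\F_p}\widehat{\Omega}_1(\ZZ(\GG_R))=fe(l_1+l_2)$ irreducibles whose central characters form an $\F_p$-basis, which yields the term $fel_2$ and the $\F_p$-basis condition encoded by $\mathcal O(d,m,e,l_1)$. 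The only point the paper adds that you do not mention is the remark that the Rado--Horn step used in \cite{BMS19} to pass from $\F_p$-bases to $\F_q$-bases does not extend here, which explains why the minimum in the statement is over $\F_p$-bases rather than over $\GL_{l_1}(R)$; this is commentary on the shape of the result rather than an ingredient of the proof.
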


\begin{proof}
Since the argument is nearly identical to the proof of Theorem~\ref{Algorithm}, we will only explain the points of divergence from~\cite{BMS19}. In what follows, we will refer to the notation of~\cite{BMS19}. 
We will not give an explicit value of $M$, but it  can be  computed  similar to~\cite{BMS19}.
Using Remark~\ref{rmk:exactseq} and some elementary arguments, one can verify that 
$Z(\g_R)=Z(\mathscr G_R)=Z(\g)_R$, $[\g_R,\g_R]=[\g,\g]_R$, and $Z(\g)_R\cap [\g_R,\g_R]=(Z(\g)\cap [\g,\g])_R$. 
In addition, the latter $R$-modules embed as free submodules of $\g_R$. We choose a primitive additive character $\psi:\mathcal O/\mathfrak p^d\to \C^*$  as follows.  
Let $\psi_\circ:\Q_p\to \C^*$ be the additive character defined by $\psi_\circ(x):=e^{2\pi i\{x\}_p}$, where $\{x\}_p$ denotes the $p$-adic fractional part of $x$. Note that  $\ker\psi_\circ=\Z_p$. Now let $\mathbb K$ denote the fraction field of $\mathcal O$, and set \[
\mathfrak D:=\left\{x\in \mathcal O\ :\ \mathrm{Tr}_{\mathbb K/\Q_p}(x\mathcal O)\subseteq \Z_p\right\}
.\] Then $\mathfrak D=\mathfrak p^{-\ell}$ for some $\ell\geq 0$. Fixing a uniformizer $\varpi\in\mathfrak p$, we obtain an isomorphism of the character group  $\widehat{\mathcal O/\mathfrak p^d}$ with $\mathcal O/\mathfrak p^{d}$ via the map
$
b+\mathfrak p^{d}\mapsto \psi_b\in\widehat{\mathcal O/\mathfrak p^d}
$, where 
$\psi_{b}(x):=\psi_\circ(\mathrm{Tr}_{\mathbb K/\Q_p}(\varpi^{-d-\ell}bx))
$.
Note that $\psi_{b}(x)=\psi(bx)$ where $\psi:=\psi_1$. From this  it follows that 
$\ker(\psi)$ does not contain any nontrivial ideal of $\mathcal O/\mathfrak p^d$. 
Choose $\mathbf u_1,\ldots,\mathbf u_{l_3}\in\g$ to be representatives of a semibasis of $\g/(Z(\g)+[\g,\g])$. Then by  the above remarks for $p$ sufficiently large $\g_R$ is a free $R$-module with generators $\mathbf w_1,\ldots,\mathbf w_m,\mathbf z_1,\ldots,\mathbf z_{l_2},\mathbf u_1,\ldots, \mathbf u_{l_3}$. Consequently, additive characters of $\g_R$ are of the form
\[
\psi_{\mathbf a}:=
\psi\left(\sum_{i=1}^m a_i\mathbf w_i+\sum_{i=1}^{l_2}b_i\mathbf z_i+\sum_{i=1}^{l_3}c_i\mathbf u_i\right),
\]
where $\mathbf a:=(a_1,\ldots,a_m,b_1,\ldots,b_{l_2},c_1,\ldots c_{l_3})\in R^{\oplus (m+l_2+l_3)}$.
The dimension of the irreducible representation of $\GG_R$ corresponding to $\psi_{\mathbf a}$ is 
equal to 
\[
\left(
\frac{q^{d(\mathrm{rk}_\Z(\g) -l_1-l_2)}}
{\#\ker_{\mathcal O/\mathfrak p^d}F_\g (a_1,\ldots,a_m)}
\right)^\frac12.
\]
The rest of the argument is essentially the same as the proof of Theorem~\ref{Algorithm}. We remark that in the proof given in~\cite{BMS19} we used Rado-Horn's Theorem to reduce the mimimum from being 
over $\F_p$-bases to over $\F_q$-bases. We are unable to extend this step to the setting of finite truncated valuation rings, which is why in the statement of Proposition~\ref{Algorithm-R} the minimum is taken over a family of $\F_p$-bases.
\end{proof}

\begin{definition}
Let $\{\phi_i\}_{i\in\mathcal I}$ be a finite family of polynomials
in $\Z[T_1,\ldots,T_m]$. A solution $(z_1,\ldots,z_m)\in (\mathcal O/\mathfrak p)^{\oplus m}$ to the system of equations $\phi_i=0$, $i\in\mathcal I$, is called refinable to $\mathcal O$ if there exists a solution $(\tilde z_1,\ldots,\tilde z_m)\in\mathcal O^{\oplus m}$ to this system whose reduction modulo $\mathfrak p$ equals $(z_1,\ldots,z_m)$. 
\end{definition}

In the proof of 
Theorem~\ref{thm:upperbound} 
we need 
a variation of Hensel's Lemma, which has no constraint on the derivative,  but assumes that the residual characteristic is sufficiently large.
This result 
(see Proposition~\ref{prop:filterhensel} below)
has been proved by Ax and Kochen~\cite{AxKochenI}
 over $\Q_p$ using ultrafilters, and by Greenleaf~\cite{Greenleaf} and Greenberg~\cite{Greenberg} over arbitrary $p$-adic fields (with a quantitative lower bound on $p$)
using algebraic geometry. 
An elementary argument for Greenberg's theorem was given by Kneser in~\cite{Kneser}. 
We remark that a minor adaptation of the proof of Ax and Kochen establishes Proposition~\ref{prop:filterhensel}.

  \begin{proposition}
\label{prop:filterhensel}
Let $\{\phi_i\}_{i\in \mathcal I}$ be a finite family of multivariate polynomials with integer coefficients. Then there exists a constant $M>0$
such that  for every $p$-adic field $\mathbb K$ with ring of integers $\mathcal O$, such that the characteristic of the residue field $\mathcal O/\mathfrak p$ is bigger than $M$, every solution 
in $\mathcal O/\mathfrak p$ of the system of equations $\phi_i=0$, $i\in\mathcal I$, is refinable  to a solution in $\mathcal O$.
\end{proposition}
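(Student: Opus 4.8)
The plan is to argue by contradiction via ultraproducts, adapting the method of Ax and Kochen~\cite{AxKochenI}. Suppose Proposition~\ref{prop:filterhensel} fails for the finite family $\{\phi_i\}_{i\in\mathcal I}$. Then for every $n\in\mathbb N$ there is a $p$-adic field $\mathbb K_n$, with valuation $v_n$, valuation ring $\mathcal O_n$, maximal ideal $\mathfrak p_n$, and residue field of characteristic $p_n>n$, together with a tuple $\bar z^{(n)}\in(\mathcal O_n/\mathfrak p_n)^{\oplus m}$ that solves the system $\phi_i=0$ over $\mathcal O_n/\mathfrak p_n$ but admits no lift to a solution in $\mathcal O_n^{\oplus m}$. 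Fix representatives $\hat z^{(n)}\in\mathcal O_n^{\oplus m}$ of $\bar z^{(n)}$, fix a non-principal ultrafilter $\mathcal U$ on $\mathbb N$, and form the valued field $\mathbb K^*:=\prod_n\mathbb K_n/\mathcal U$ with valuation ring $\mathcal O^*:=\prod_n\mathcal O_n/\mathcal U$, maximal ideal $\mathfrak m^*:=\prod_n\mathfrak p_n/\mathcal U$, and residue field $k^*:=\mathcal O^*/\mathfrak m^*$.

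Before using $\mathbb K^*$ I would record two structural facts. First, since each complete discretely valued field is Henselian and Henselianity of a valued field is expressed by a recursive scheme of first-order sentences in the language of valued fields (one sentence per degree), {\L}o\'s's theorem shows that $\mathbb K^*$ is Henselian. Second, because $p_n\to\infty$ along $\mathcal U$, every rational prime becomes a unit of $\mathcal O^*$, so $\mathbb Q\subseteq\mathcal O^*$ and the residue map carries $\mathbb Q$ isomorphically onto the prime field $\mathbb Q\subseteq k^*$; in particular $\mathbb K^*$ is of equicharacteristic $0$.

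The heart of the argument is a lifting step: every $k^*$-point of the affine scheme cut out by the $\phi_i$ lifts to an $\mathcal O^*$-point. Concretely, put $\bar z^*:=[(\bar z^{(n)})]\in(k^*)^{\oplus m}$, so that $\phi_i(\bar z^*)=0$ for all $i$ by {\L}o\'s's theorem, and let $L:=\mathbb Q(\bar z^*_1,\dots,\bar z^*_m)\subseteq k^*$, a finitely generated extension of $\mathbb Q$. I claim the inclusion $L\hookrightarrow k^*$ lifts to a ring embedding $\iota\colon L\hookrightarrow\mathcal O^*$ whose composition with the residue map $\mathcal O^*\to k^*$ is the original inclusion. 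To see this, pick a transcendence basis $t_1,\dots,t_s$ of $L/\mathbb Q$ and arbitrary unit preimages $\tilde t_i\in\mathcal O^{*,\times}$; since the coefficients involved lie in $\mathbb Q$, reduction shows the $\tilde t_i$ remain algebraically independent and $\mathbb Q(\tilde t_1,\dots,\tilde t_s)\subseteq\mathcal O^*$ maps isomorphically onto $\mathbb Q(t_1,\dots,t_s)$. Writing $L=\mathbb Q(t_1,\dots,t_s)(\beta)$ with $\beta$ a simple root of a monic irreducible polynomial $g$ (separability is automatic in characteristic $0$), the transported monic polynomial $\tilde g\in\mathbb Q(\tilde t_1,\dots,\tilde t_s)[Y]\subseteq\mathcal O^*[Y]$ reduces to $g$, so Henselianity of $\mathbb K^*$ provides $\tilde\beta\in\mathcal O^*$ with $\tilde g(\tilde\beta)=0$ and residue $\beta$; then $\mathbb Q(\tilde t_1,\dots,\tilde t_s)(\tilde\beta)\subseteq\mathcal O^*$ is the required lift of $L$. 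Applying $\iota$ coordinatewise, the tuple $\tilde w^*:=(\iota(\bar z^*_1),\dots,\iota(\bar z^*_m))\in(\mathcal O^*)^{\oplus m}$ satisfies $\phi_i(\tilde w^*)=0$ for all $i$ (as $\iota$ is a $\mathbb Z$-algebra homomorphism) and reduces to $\bar z^*$ modulo $\mathfrak m^*$.

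To conclude I would descend back to the fields $\mathbb K_n$: the first-order formula $\exists\, w\in\mathcal O^{\oplus m}\ \big(\bigwedge_{i\in\mathcal I}\phi_i(w)=0\ \wedge\ \bigwedge_{j=1}^m v(w_j-\hat z_j)>0\big)$, with parameters $\hat z_1,\dots,\hat z_m$, holds in $\mathbb K^*$ (witnessed by $\tilde w^*$), hence by {\L}o\'s's theorem it holds in $\mathbb K_n$ for $\mathcal U$-almost every $n$; but this says precisely that $\bar z^{(n)}$ is refinable to $\mathcal O_n$ for some $n$, contradicting the choice of the $\mathbb K_n$. I expect the main obstacle to be the lifting step of the third paragraph — that is, verifying that an equicharacteristic-$0$ Henselian valued field admits a coefficient-field-type section of its residue map over each finitely generated subfield of the residue field; the transfer of Henselianity through the ultraproduct and the bookkeeping with {\L}o\'s's theorem are routine. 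An alternative, ultrafilter-free route would be to invoke Greenberg's theorem~\cite{Greenberg} (or Kneser's elementary refinement~\cite{Kneser}) to extract an explicit bound $M$, at the cost of a less transparent argument.
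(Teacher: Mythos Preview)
Your argument is correct and is precisely the ``minor adaptation of the proof of Ax and Kochen'' that the paper alludes to; the paper itself does not supply a proof but only cites \cite{AxKochenI}, \cite{Greenleaf}, \cite{Greenberg}, and \cite{Kneser} and remarks that the Ax--Kochen ultrafilter method carries over. Your write-up fills in exactly those details---the ultraproduct of Henselian fields is Henselian, the resulting field is of equicharacteristic zero, a finitely generated subfield of the residue field lifts via a transcendence basis plus one application of Hensel's lemma, and {\L}o\'s's theorem transfers the existence of the lift back to almost all $\mathbb K_n$---so there is nothing to add.
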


\begin{proof}[Proof of Theorem~\ref{thm:upperbound}]
Set $\mathscr P:=\mathscr P_{i_\circ}$,  $\mathscr F:=\mathscr F_{j_\circ}$, and $g(T):=g_{i_\circ j_\circ}(T)$.
As in Corollary~\ref{cor:mixed},
we express $g(T)$ as   $g(T)=l_2+\hat{g}(T)$ where $\hat{g}(T):=\sum_{i=0}^{\deg(\hat{g})}\hat{g}_iT^i$.
First we prove the upper bound. 
Set $\tilde g_k:=\sum_{i=0}^k \hat{g}_i$ for $0\leq k\leq \deg(\hat{g})$
and  $\tilde g_{-1}:=0$. 
Recall that $n:=\rank_\Z(\g/\ZZ(\g))$ is the size of the commutator matrix $F_\g$. For $0\leq k\leq \lfloor \frac{n}{2}\rfloor$ let 
$D^{(k)}$ be the $n\times n$ diagonal matrix 
whose diagonal entries are defined by $D^{(k)}_{i,i}=1$ for $i\leq 2k$ and $D^{(k)}_{i,i}=0$ for $i>2k$. 
Consider the system of polynomial equations
with integer coefficients in variables 
$\mathsf  
x_{a,b}
^{(i)}
$,
$\mathsf  
y_{a,b}
^{(i)}
$,
$\mathsf r^{(i)}$,
$\mathsf s^{(i)}$,
and 
$\mathsf a_j^{(i)}$, for
$1\leq a,b\leq n$, $1\leq i\leq fl_1$, and $1\leq j\leq m:=\rank_\Z([\g ,\g ])$, 
that is defined by
\begin{equation}
\label{eq:system}
\begin{cases}
X^{(i)}
F(\mathsf a_1^{(i)},\ldots,\mathsf a_m^{(i)})Y^{(i)}
=D^{(k)}\quad &\text{ for } f\tilde g_{k-1}+1\leq i\leq f\tilde g_k\text{ and }
0\leq k\leq \deg(\hat{g}),\\
\mathsf r^{(i)}\det X^{(i)}=1& 
\text{ for }
1\leq i\leq fl_1,\\
\mathsf s^{(i)}\det Y^{(i)}=1& 
\text{ for }
1\leq i\leq fl_1,\\\end{cases}
\end{equation}
 where  $X^{(i)}$ and $Y^{(i)}$ are $n\times n$ matrices with entries $\mathsf x_{a,b}^{(i)}$ and 
$\mathsf y_{a,b}^{(i)}$, respectively. 
Now fix $(p,f)\in\mathscr P\times \mathscr F$,  
and let  $\mathbf a_1,\ldots,\mathbf a_{l_1}\in (\mathcal O/\mathfrak p)^{\oplus m}$
be chosen as in Corollary~\ref{cor:mixed}.
Next choose any $\F_p$-basis $\beta_1,\ldots,\beta_f$ for $\mathcal O/\mathfrak p$, and consider the vectors 
$\mathbf a'_{i,j}:=\beta_j\mathbf a_i$ for $1\leq i\leq l_1$ and $1\leq j\leq f$.
The projections of the $\mathbf a'_{i,j}$ on the first $l_1$ coordinates form a basis for the $\F_p$-vector space 
$(\mathcal O/\mathfrak p)^{\oplus l_1}$. 
Furthermore, 
\[
\#\left\{(i,j)\,:\,
\mathrm{rk}_{\mathcal O/\mathfrak p}(F(\mathbf a'_{i,j}))={2k}
\right\}=f\hat{g}_k,
\]
because
$\#\ker_{\mathcal O/\mathfrak p}(F(\beta_j\mathbf a_i))$
does not depend on $j$. It follows that for all $(p,f)\in \mathscr P\times \mathscr F$, the polynomial system~\eqref{eq:system} has a solution in $\mathcal O/\mathfrak p$, where
 the vectors $(\mathsf a_1^{(u)},\ldots,\mathsf  a_m^{(u)})$ for 
$f\tilde g_{k-1}+1\leq u\leq f\tilde g_k$ 
are equal to the  $\mathbf a'_{i,j}$ satisfying $\mathrm{rk}_{\mathcal O/\mathfrak p}(F(\mathbf a'_{i,j}))=2k$.  
By Proposition~\ref{prop:filterhensel}, for $p$ sufficiently large (depending only on the system~\eqref{eq:system}, hence only on $\g$) this solution is refinable to a solution in $\mathcal O$. In particular there exist vectors $\tilde{\mathbf a}_i\in\mathcal O^{\oplus m}$ for $1\leq i\leq fl_1$ such that $X_iF(\tilde{ \mathbf a}_i)Y_i=D^{(k)}$ for $f\tilde g_{k-1}+1\leq i\leq f\tilde g_k$ and 
$0\leq k\leq \deg(\hat{g})$, where $X_i$ and $Y_i$ are invertible $n\times n$ matrices with entries in $\mathcal O$. If $\varpi\in\mathfrak p$ is a uniformizer then for $0\leq j\leq e-1$ we have 
$
\#\ker_{\mathcal O/\mathfrak p^d}F(\varpi^j\tilde{\mathbf a}_i)=p^{fd(n-2k)+2kfj}
$. 
Since 
$\mathrm{rk}_\Z(\g) -l_1-l_2=n$, we obtain
\[
\left(
\frac{p^{fd(\mathrm{rk}_\Z(\g) -l_1-l_2)}}{\#\ker_{\mathcal O/\mathfrak p^d}(F(\varpi^j\tilde{\mathbf a}_i))}
\right)^\frac12=p^{kf(d-j)}.
\]
Furthermore, the projections to the first $l_1$ coordinates of the list of vectors \[
\{\varpi^j\tilde{\mathbf a}_i\ :\ 0\leq j\leq e-1\text{ and }1\leq i\leq fl_1\}\] 
forms a basis of the $\F_p$-vector space $(\mathcal O/\mathfrak p^e)^{\oplus l_1}$. The upper bound part of the theorem follows from Proposition~\ref{Algorithm-R}.

For the lower bound, it suffices to  assume that $p>\max\left\{l_1+l_2,l_2+\sum_{i=0}^{\deg(\hat{g})}\hat{g}_i\right\}$.  Suppose that 
the $fel_1$-tuple 
$(\mathbf a_1,\ldots,\mathbf a_{fe l_1})$
of vectors in $(\mathcal O/\mathfrak p^d)^{\oplus m}$ corresponds as in Proposition~\ref{Algorithm-R} to the 
value of $\mf(\GG_R)$. Let $
\overline{\mathsf{proj}}:(\mathcal O/\mathfrak p^d)^{\oplus m}\to (\mathcal O/\mathfrak p)^{\oplus l_1}
$ be the natural projection map defined similar to $\mathsf{proj}$ (see~\eqref{eq:dfnofprj}). The vectors 
$\overline{\mathsf{proj}}(\mathbf a_i)$ form a spanning set of the $\F_p$-vector space $(\mathcal O/\mathfrak p)^{\oplus l_1}$, hence without loss of generality we can assume that 
$\{\overline{\mathsf{proj}}(\mathbf a_i)\}_{i=1}^{fl_1}$ is an $\F_p$-basis of $(\mathcal O/\mathfrak p)^{\oplus l_1}$. 
For $d\geq 1$ and $1\leq i\leq fl_1$ set
\[
x_{i,d}:=\left(
\frac{p^{f(\mathrm{rk}_\ZZ(\g)-l_1-l_2)}}{\#\ker_{\mathcal O/\mathfrak p^d}(F(\mathbf a_i))}
\right)^\frac12.
\]
Note that $x_{i,d}^2$ is the cardinality of the subgroup of $(\mathcal O/\mathfrak p^d)^{\oplus m}$ that is generated by the columns of $F(\mathbf a_i)$, and since
$F(\mathbf a_i)$ is a skew symmetric matrix,  we also have $x_{i,d}=p^{fy_{i,d}}$ for a non-negative integer $y_{i,d}$.
 It follows that $x_{i,d}\geq x_{i,1}^d$. 
By Proposition~\ref{Algorithm-R} (for $R=\F_{p^f}$) and Theorem~\ref{thm:mixed},
\begin{equation}
\label{eq:fl2+}
fl_2+\sum_{i=1}^{fl_1}
p^{fy_{i,1}}\geq fg(p^f).
\end{equation}
Setting $y_{i,d}=0$ for $fl_1+1\leq i\leq f(l_1+l_2)$ and $d\geq 1$, from~\eqref{eq:fl2+} it follows that  
\begin{equation}
\label{eq:ffkjf}
\sum_{i=1}^{f(l_1+l_2)}
p^{fy_{i,1}}\geq fg(p^f).
\end{equation}
Our strategy is to prove that 
\begin{equation}
\label{eq:pfddd}
\sum_{i=1}^{f(l_1+l_2)}
p^{fdy_{i,1}}\geq fg(p^{fd})
.\end{equation}
Without loss of generality we 
assume that $y_{1,1}\geq \cdots\geq y_{f(l_1+l_2),1}$. Set $N:=\deg(\hat{g})$. 
There are two cases to consider:

\emph{Case 1.} $y_{i,1}\leq N$ for all $i$. We claim that  
 $y_{i,1}=N$
for $1\leq i\leq f\hat{g}_N$: otherwise  
since $p\geq 1+l_1+l_2$ and the coefficients of $\hat{g}(T)$ are non-negative, we have
\[
\sum_{i=1}^{f(l_1+l_2)}
p^{fy_{i,1}}\leq (f\hat{g}_N-1)p^{fN}+f(l_1+l_2)p^{f(N-1)}<f\hat{g}_Np^{fN}\leq f \hat{g}(p^f)\leq fg(p^f),
\]
which is a contradiction. 
It follows from above that 
$y_{i,1}=N$ for $1\leq i\leq f\hat{g}_N$. 
If $y_{f\hat{g}_N+1,1}=N$
then from $p\geq l_2+\sum_{i=0}^N\hat{g}_i$ it follows that
\begin{align*}
\mf(\GG_R)&\geq 
\sum_{i=1}^{f(l_1+l_2)}
x_{i,d}
\geq 
\sum_{i=1}^{f(l_1+l_2)}
x_{i,1}^d
=\sum_{i=1}^{f(l_1+l_2)}
p^{fdy_{i,1}}
\geq 
(f\hat{g}_N+1)p^{fdN}\\
&= f\hat{g}_Np^{fdN}+p^{fd(N-1)}p^{fd}
\geq f\hat{g}_Np^{fdN}+
p^{fd(N-1)}
f\left(l_2+\sum_{i=0}^{N-1}
\hat{g}_i\right)
\geq fg(p^{fd}).
\end{align*}
If $y_{f\hat{g}_N+1,1}<N$, then
by cancelling out the summands $p^{fN}$ from both sides of~\eqref{eq:ffkjf} we obtain a similar relation for 
a polynomial of lower degree on the right hand side of~\eqref{eq:pfddd}, and we can repeat the above argument (leading to either Case 1 above, or Case 2 below).

\emph{Case 2.} $y_{1,1}\geq N+1$. Then 
from $p\geq l_2+\sum_{i=0}^N\hat{g}_i$ it follows that
\begin{align}
\mf(\GG_R)\geq p^{fdy_{1,1}}
\geq p^{fd(N+1)}
&\geq (1+l_2+\sum_{i=0}^N\hat{g}_i)^fp^{fdN}
\\
\geq fl_2+f
p^{fdN}\sum_{i=0}^N\hat{g}_i\geq fg(p^{fd}).
&\qedhere
\end{align}
\end{proof}

\section{Polynomiality over rings for pattern groups}

In this section we prove Theorem~\ref{partial-order-Lie}, which in particular establishes Conjecture~\ref{conjj} for pattern groups. Throughout this section we set  $\g:=\g_\prec$.  We begin with recalling some notation and general facts from~\cite{BMS19} and~\cite{BMS}. 
For an abelian $p$-group $\Gamma$,  set  
\[
\Omega_1(\Gamma):=\{g\in \Gamma: g^p=\1\}.
\]
Note that $\Omega_1(\Gamma)$ is an $\F_p$-vector space. For an abelian group $A$ we set
$\widehat{A}:=\Hom(A, \C^{\ast})$. When $A$ is an elementary abelian $p$-group,  $\widehat{A}$ is canonically an $\F_p$-vector space and there exists an isomorphism $\widehat{A}\cong\Hom(A,\FZ)$ obtained by identifying $\FZ$ with the subgroup of $p$-th roots of unity in $\C^\ast$. 

Recall that $R\cong \mathcal O/\mathfrak p^d$ for $d\geq 1$. 
Fix a primitive character $\psi: R\to \C^*$ (for example the character described in the proof of Proposition~\ref{Algorithm-R}), and also  set 
$\psi_b(x):=\psi(bx)$ for 
 $b\in R$
and $x\in R$,
so that the map
\begin{equation}
\label{eq:RRhat}
R\to \widehat{R}\ ,\ b\mapsto\psi_b
\end{equation}
is a group isomorphism.
Now let $\frak{a}$ be an ideal in $R$ and set  $\Ann(\frak{a}):=\{r\in R: r\frak{a}=0\}$. From surjectivity of the  restriction $\widehat{R}\to \widehat{\frak{a}}$ it follows that the map
\[
R/\Ann(\frak{a})\to \widehat{\frak{a}}\ , \ b+\Ann(\frak{a})\mapsto {\psi_b}\big|_{\frak{a}}
\]
%
is a group isomorphism.
Notice that $\Ann(\mathfrak{p}^k/\mathfrak{p}^d)=\mathfrak{p}^{(d-k)}/\mathfrak{p}^d$ for $0\leq k\leq d$, so that we obtain the following lemma.

\begin{lemma}\label{Character-local} Every additive character of the  group $\mathfrak{p}^k/\mathfrak{p}^d$, where
$0\leq k\leq d$, is of the form
\begin{equation*}
\psi_{b}: \mathfrak{p}^k/\mathfrak{p}^d\to \C^* \ ,
\ x\mapsto \psi(bx),
\end{equation*}
for a unique 
$b+\mathfrak{p}^{(d-k)}/\mathfrak{p}^d
\in 
(\OO{d})/(\mathfrak{p}^{(d-k)}/\mathfrak{p}^d)\cong \OO{d-k}$. In particular $\widehat{\mathfrak{p}^k/\mathfrak{p}^d}\cong \OO{(d-k)}$.
\end{lemma}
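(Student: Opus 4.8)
The plan is to establish the two isomorphisms in turn, the first being the description of characters and the second its immediate corollary. First I would recall the setup: $R=\OO{d}$, and the map $R\to\widehat R$, $b\mapsto\psi_b$, is a group isomorphism (this is the content of~\eqref{eq:RRhat}). Since $R$ is self-dual via $\psi$, restricting characters from $R$ to the subgroup $\mathfrak p^k/\mathfrak p^d$ gives a surjection $\widehat R\to\widehat{\mathfrak p^k/\mathfrak p^d}$ (surjectivity of restriction of characters holds for any subgroup of a finite abelian group, by Pontryagin duality / the divisibility of $\C^*$). Composing with $b\mapsto\psi_b$, every character of $\mathfrak p^k/\mathfrak p^d$ is of the form $x\mapsto\psi(bx)$ for some $b\in R$.

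Next I would identify the kernel of this surjection $R\to\widehat{\mathfrak p^k/\mathfrak p^d}$, $b\mapsto\psi_b|_{\mathfrak p^k/\mathfrak p^d}$. By the general remark already made in the excerpt (right before the lemma), the kernel is exactly $\Ann(\mathfrak p^k/\mathfrak p^d)$, the annihilator of $\mathfrak p^k/\mathfrak p^d$ as an ideal of $R$; here one uses that $\psi$ is \emph{primitive}, i.e. $\ker\psi$ contains no nonzero ideal, so that $\psi_b$ vanishes on $\mathfrak p^k/\mathfrak p^d$ precisely when $b\cdot(\mathfrak p^k/\mathfrak p^d)\subseteq\ker\psi$, which forces $b\cdot(\mathfrak p^k/\mathfrak p^d)=0$. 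Then I would compute this annihilator: since the ideals of $\OO{d}$ form the chain $\mathfrak p^j/\mathfrak p^d$ for $0\le j\le d$, and $(\mathfrak p^{d-k}/\mathfrak p^d)(\mathfrak p^k/\mathfrak p^d)=\mathfrak p^d/\mathfrak p^d=0$ while $(\mathfrak p^{d-k-1}/\mathfrak p^d)(\mathfrak p^k/\mathfrak p^d)=\mathfrak p^{d-1}/\mathfrak p^d\ne 0$, we get $\Ann(\mathfrak p^k/\mathfrak p^d)=\mathfrak p^{d-k}/\mathfrak p^d$. Hence the induced map $R/\Ann(\mathfrak p^k/\mathfrak p^d)\to\widehat{\mathfrak p^k/\mathfrak p^d}$ is an isomorphism, and $R/\Ann(\mathfrak p^k/\mathfrak p^d)=(\mathfrak p^0/\mathfrak p^d)/(\mathfrak p^{d-k}/\mathfrak p^d)\cong\mathcal O/\mathfrak p^{d-k}=\OO{d-k}$. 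This gives both the uniqueness of $b$ modulo $\mathfrak p^{d-k}/\mathfrak p^d$ and the final statement $\widehat{\mathfrak p^k/\mathfrak p^d}\cong\OO{d-k}$.

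I do not anticipate a serious obstacle here; the statement is essentially a bookkeeping consequence of Pontryagin duality for finite abelian groups together with the self-duality of $R$ under a primitive character. The one point deserving care is the word \emph{primitive}: one must record precisely that $\ker\psi$ contains no nontrivial ideal of $R$ (as was verified for the explicit $\psi$ constructed in the proof of Proposition~\ref{Algorithm-R}), since this is exactly what makes the kernel of $b\mapsto\psi_b|_{\mathfrak p^k/\mathfrak p^d}$ equal to the algebraic annihilator rather than something larger. Everything else — surjectivity of restriction, the computation of $\Ann(\mathfrak p^k/\mathfrak p^d)$ in the chain of ideals of a truncated valuation ring — is routine. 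I would therefore present the proof as: (1) invoke~\eqref{eq:RRhat} and surjectivity of $\widehat R\to\widehat{\mathfrak p^k/\mathfrak p^d}$; (2) pass to the quotient by the annihilator using primitivity; (3) evaluate $\Ann(\mathfrak p^k/\mathfrak p^d)=\mathfrak p^{d-k}/\mathfrak p^d$ and conclude.
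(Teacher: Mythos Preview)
Your proposal is correct and follows essentially the same approach as the paper: the paper establishes, just before the lemma, the general isomorphism $R/\Ann(\mathfrak a)\to\widehat{\mathfrak a}$ via $b\mapsto\psi_b|_{\mathfrak a}$ (using surjectivity of restriction and primitivity of $\psi$), then specializes to $\mathfrak a=\mathfrak p^k/\mathfrak p^d$ and notes $\Ann(\mathfrak p^k/\mathfrak p^d)=\mathfrak p^{d-k}/\mathfrak p^d$. Your write-up is slightly more detailed in justifying the annihilator computation and the role of primitivity, but the argument is the same.
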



Hereafter, for a finite $p$-group $\G$ the $\FZ$-vector space $\mathrm{Hom}(\Omega_1(\ZZ(\G)),\C^*)$ will be denoted by $\widehat{\Omega}_1(\ZZ(\G))$.
By the central character of an irreducible representation $(\rho,V)$ of $\G$ we mean the group homomorphism $\chi:\ZZ(\G)\to\C^*$ satisfying $\rho(g)=\chi(g)1_V$ for $g\in\ZZ(\G)$. 
\begin{lemma}\label{central-span-faith} Let $\G$ be a finite $p$-group.
\begin{itemize}
\item[\rm (i)]
Let $(\rho_i,V_i)_{1\leq i\leq k}$ be a family of irreducible representations of $\G$ with  central characters $\chi_i$. Suppose  that $\{\restr{{\chi_i}}{\Omega_1(\ZZ(\G))}: 1\leq i\leq k\}$ spans $\widehat{\Omega}_1(\ZZ(\G))$. Then $\oplus_{1\leq i\leq k}\rho_i$ is a faithful representation of $\G$.  

\item[\rm (ii)]
Let $\rho$ be a faithful representation of $\G$ of dimension $\mf(\G)$. Then $\rho$ decomposes as a direct sum of exactly $r:=r_G$ irreducible representations,
where $r_G$ denotes the minimum number of generators of the abelian group $\ZZ(\G)$. Furthermore, the restrictions to $\Omega_1(\ZZ(\G))$ of the central characters of these representations form a $\FZ$-basis for $\widehat{\Omega}_1(\ZZ(\G))$.
\end{itemize}
\end{lemma}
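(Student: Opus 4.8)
The plan is to deduce the whole lemma from two standard facts about finite $p$-groups together with the perfect pairing between an elementary abelian $p$-group and its dual. The first fact I would invoke is that every nontrivial normal subgroup $N$ of a finite $p$-group $\G$ meets $\ZZ(\G)$ nontrivially; since $N\cap\ZZ(\G)$ is then a nontrivial finite abelian $p$-group, it contains an element of order $p$, so in fact $N\cap\Omega_1(\ZZ(\G))\neq\{\1\}$. The second is structure-theoretic: $\Omega_1(\ZZ(\G))$ is an $\FZ$-vector space whose dimension equals $r_\G$, the minimal number of generators of the finite abelian $p$-group $\ZZ(\G)$, and (as recalled before the lemma) the evaluation pairing $\Omega_1(\ZZ(\G))\times\widehat{\Omega}_1(\ZZ(\G))\to\FZ$ is perfect; hence a family $\Phi\subseteq\widehat{\Omega}_1(\ZZ(\G))$ spans $\widehat{\Omega}_1(\ZZ(\G))$ over $\FZ$ exactly when the only $g\in\Omega_1(\ZZ(\G))$ killed by every member of $\Phi$ is $g=\1$.

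For part (i) I would set $\rho:=\bigoplus_{i=1}^{k}\rho_i$, so that $\ker\rho=\bigcap_i\ker\rho_i$ is normal in $\G$, and argue by contradiction. If $\ker\rho\neq\{\1\}$, the first fact produces a nontrivial $g\in\ker\rho\cap\Omega_1(\ZZ(\G))$. For each $i$, combining $\rho_i(g)=\chi_i(g)1_{V_i}$ (since $g$ is central) with $\rho_i(g)=1_{V_i}$ (since $g\in\ker\rho_i$) gives $\chi_i(g)=1$, so $\restr{{\chi_i}}{\Omega_1(\ZZ(\G))}$ annihilates $g$. Since by hypothesis these restrictions span $\widehat{\Omega}_1(\ZZ(\G))$, perfectness of the pairing forces $g=\1$, a contradiction. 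Hence $\ker\rho=\{\1\}$ and $\rho$ is faithful.

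For part (ii), given a faithful $\rho$ with $\dim\rho=\mf(\G)$, I would decompose $\rho=\bigoplus_{i=1}^{k}\rho_i$ into irreducibles with central characters $\chi_i$. Faithfulness gives $\ker\rho\cap\Omega_1(\ZZ(\G))=\{\1\}$, which says that the only element of $\Omega_1(\ZZ(\G))$ annihilated by all $\restr{{\chi_i}}{\Omega_1(\ZZ(\G))}$ is $\1$; by the perfect pairing these restrictions then span $\widehat{\Omega}_1(\ZZ(\G))$, whence $k\geq\dim_{\FZ}\widehat{\Omega}_1(\ZZ(\G))=r_\G$. If $k>r_\G$, I would pick $S\subseteq\{1,\dots,k\}$ of size $r_\G$ with $\{\restr{{\chi_i}}{\Omega_1(\ZZ(\G))}:i\in S\}$ still spanning (a spanning set contains a basis); part (i) then makes $\bigoplus_{i\in S}\rho_i$ faithful of dimension strictly below $\mf(\G)$, contradicting minimality. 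So $k=r_\G$, and an $r_\G$-element spanning family of an $r_\G$-dimensional space is automatically a basis, giving the final assertion.

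I do not expect a serious obstacle: the one step that genuinely uses the $p$-group hypothesis is the implication $\ker\rho\cap\Omega_1(\ZZ(\G))=\{\1\}\Rightarrow\ker\rho=\{\1\}$, which relies on the first fact above and fails for general finite groups. Apart from that, the argument is just the perfect pairing of a finite $\FZ$-vector space with its dual — where the only thing to watch is keeping the $\FZ$-linear structure on $\widehat{\Omega}_1(\ZZ(\G))$ consistent, via the identification of $p$-th roots of unity with $\FZ$ — together with the identity $\dim_{\FZ}\Omega_1(\ZZ(\G))=r_\G$.
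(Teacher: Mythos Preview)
Your proposal is correct and is the standard argument; the paper itself does not supply a proof of this lemma but simply refers to \cite[Lem.~3.4 and 3.5]{BMS19} and \cite{ReichsteinI}, where essentially the same reasoning appears. There is nothing to add.
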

\begin{proof}
For (i), see~\cite[Lem. 3.4]{BMS19} or \cite{ReichsteinI}. For (ii), see~\cite[Lem. 3.5]{BMS19}
\end{proof}


\begin{remark}
\label{rmk:mingen}
For a finite abelian group $A$, the minimum number of generators is equal to the number of invariant factors of $A$ (this is a consequence of elementary divisor theorem for finite abelian groups). By a similar argument, if $A$ is a finite abelian $p$-group, then the minimum number of generators of $A$ is equal to $\dim_{\FZ}(A\otimes_\Z \FZ)$. In particular, the minimum number of generators of the abelian group $\OO{d}$ is equal to $fe$. 
This is because $\OO{d}\otimes_\mathbb Z\FZ\cong 
(\OO{d})/(p(\mathcal O/\mathfrak p^d))
\cong 
\OO{e}$ and
$\#\mathfrak p^i/\mathfrak p^{i+1}=p^f$ for $0\leq i\leq e-1$.
\end{remark}

Next we establish a general combinatorial lemma about bases of direct sums. 

\begin{lemma}\label{Genral-decomposition} Let  $V_1$ and $V_2$ be two vector spaces of dimensions $d_1$ and $d_2$ over an arbitrary field. Let $S=\{(v_i,v'_i): 1\leq i\leq d_1+d_2\}$ be a basis for $V_1\oplus V_2$. Then $S$ can be partitioned into two sets $B_1$ and $B_2$, of sizes $d_1$ and $d_2$ respectively, such that the $V_i$-components of the vectors in $B_i$ form  a basis of $V_i$ for $i=1,2$.
\end{lemma}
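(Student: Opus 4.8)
The statement is essentially a matroid-union / exchange fact, and the cleanest route is via the matroid intersection or matroid union theorem; but since the paper is self-contained, I would give a direct linear-algebra argument using a greedy/exchange procedure. First I would fix the projection maps $\pi_1:V_1\oplus V_2\to V_1$ and $\pi_2:V_1\oplus V_2\to V_2$, so that each element of $S$ is the pair $(v_i,v'_i)=(\pi_1(s_i),\pi_2(s_i))$. The goal is to split the index set $[d_1+d_2]$ into $B_1\sqcup B_2$ with $|B_j|=d_j$ and $\{\pi_j(s_i):i\in B_j\}$ a basis of $V_j$. Since $|B_j|=d_j=\dim V_j$, "basis" is equivalent to "spanning", which is equivalent to "linearly independent"; I would use whichever is convenient at each step.

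The key step is the following exchange claim: if $B_1\subseteq S$ is such that $\{\pi_1(s):s\in B_1\}$ is linearly independent but $|B_1|<d_1$, and the complementary set $S\setminus B_1$ does not have its $\pi_2$-images spanning $V_2$, then one can enlarge $B_1$. To see this, note that since $S$ is a basis of $V_1\oplus V_2$, the images $\{\pi_1(s):s\in S\}$ span $V_1$; hence there exists $s\in S\setminus B_1$ with $\pi_1(s)\notin\operatorname{span}\{\pi_1(s'):s'\in B_1\}$, so $B_1\cup\{s\}$ is still $\pi_1$-independent. The delicate point is to do this while keeping control of the other side, i.e.\ to guarantee at the end that $|B_1|=d_1$ and simultaneously $S\setminus B_1$ is a $\pi_2$-basis of $V_2$. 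The slickest way to force both conditions at once is to run the greedy algorithm: take $B_1$ to be a \emph{maximal} subset of $S$ whose $\pi_1$-images are linearly independent, subject to the extra constraint that the $\pi_2$-images of $S\setminus B_1$ still span $V_2$. I would then argue that maximality forces $|B_1|=d_1$: if $|B_1|<d_1$, pick (as above) any $s_0\in S\setminus B_1$ with $\pi_1(s_0)$ outside the span of $\pi_1(B_1)$; if $\pi_2$-images of $S\setminus(B_1\cup\{s_0\})$ still span $V_2$ we contradict maximality, so $\pi_2(s_0)$ is needed, i.e.\ $\pi_2(s_0)\notin\operatorname{span}\{\pi_2(s'):s'\in S\setminus(B_1\cup\{s_0\})\}$ — and here I would invoke the hypothesis that $S$ is a basis of the \emph{direct sum} to derive a contradiction: the pair $(v_0,v'_0)=s_0$ would then be expressible via the others, contradicting independence of $S$ in $V_1\oplus V_2$. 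Once $|B_1|=d_1$ and $\pi_1(B_1)$ independent, it is a basis of $V_1$; and $|S\setminus B_1|=d_2$ with $\pi_2$-images spanning $V_2$, so they form a basis of $V_2$. Set $B_2:=S\setminus B_1$.

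The main obstacle is making the maximality argument airtight: one must check that the family of "good" subsets $B_1$ (those that are $\pi_1$-independent \emph{and} whose complement is $\pi_2$-spanning) is nonempty — $B_1=\varnothing$ works since $\pi_2(S)$ spans $V_2$ — and that the exchange step never violates the complement-spanning constraint, which is exactly where the direct-sum hypothesis (rather than merely "spanning set") is essential. An alternative I would mention is to phrase the whole thing as an instance of the matroid union theorem for the two linear matroids induced on $S$ by $\pi_1$ and $\pi_2$: the rank condition $r_1(S)+r_2(S)=d_1+d_2=|S|$ is automatic because $S$ is a basis of $V_1\oplus V_2$, and matroid union immediately yields the partition; but since invoking matroid union may be heavier machinery than the paper wants, I would keep the elementary greedy proof as the primary argument and relegate the matroid remark to a parenthetical.
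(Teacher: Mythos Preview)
Your greedy argument has a genuine gap at exactly the point you flagged as ``delicate''. The step where you write ``the pair $(v_0,v'_0)=s_0$ would then be expressible via the others, contradicting independence of $S$'' is not justified, and in fact the implication is false. Concretely, take $V_1=V_2=k^2$ with standard basis $e_1,e_2$, and let
\[
s_1=(e_1,0),\quad s_2=(e_2,e_1),\quad s_3=(0,e_2),\quad s_4=(e_1,e_1).
\]
These four vectors form a basis of $V_1\oplus V_2$. The singleton $B_1=\{s_4\}$ is ``good'' in your sense: $\pi_1(s_4)=e_1$ is independent, and $\pi_2(\{s_1,s_2,s_3\})=\{0,e_1,e_2\}$ spans $V_2$. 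The only candidate $s_0\in S\setminus B_1$ with $\pi_1(s_0)\notin\operatorname{span}(e_1)$ is $s_0=s_2$, and removing it leaves $\pi_2(\{s_1,s_3\})=\{0,e_2\}$, which does not span $V_2$. So $B_1=\{s_4\}$ is a \emph{maximal} good set of size $1<d_1=2$, and your argument would now assert $s_2\in\operatorname{span}(s_1,s_3,s_4)$, which is false. The problem is that having $\pi_1(s_0)$ independent of $\pi_1(B_1)$ and $\pi_2(s_0)$ essential on the other side points toward $s_0$ being \emph{more} independent, not less; no contradiction with the basis hypothesis arises this way. A correct elementary argument needs a genuine exchange (swapping an element out of $B_1$ as well as putting one in), which is considerably more work than you have sketched.

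Your matroid-union remark, by contrast, is correct: for any $A\subseteq S$ one has $|A|=\dim\operatorname{span}(A)\le\dim\operatorname{span}\pi_1(A)+\dim\operatorname{span}\pi_2(A)$, so the rank condition is met and the desired partition follows. The paper, however, takes a completely different and much shorter route: it writes the coordinates of the $(v_i,v'_i)$ as columns of a $(d_1+d_2)\times(d_1+d_2)$ matrix $A$, observes $\det A\neq 0$, and applies the generalized Laplace expansion along the first $d_1$ rows,
\[
\det A=\sum_{|J|=d_1}\pm\det A_{I,J}\det A_{I',J'},
\]
to produce a $J$ with both minors nonzero; then $B_1=\{(v_j,v'_j):j\in J\}$ and $B_2$ its complement work. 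This avoids matroid theory and any exchange machinery entirely.
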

\begin{proof} The following argument is communicated to us by I. Bogdanov. Let $\{e_1,\dots,e_{d_1}\}$ and $\{e'_1,\dots,e'_{d_2}\}$ denote bases of $V_1$ and $V_2$, respectively. Then for $1\leq j\leq d_1+d_2$ we obtain
\begin{equation*}
v_j =\sum_{i=1}^{d_1} a_{ij}e_i\quad\text{and}\quad
v'_j=\sum_{i=1}^{d_2} b_{ij}e'_j.
\end{equation*} 
Consider the square matrix $A$ of size $d_1+d_2$ whose $i$-th column, for $1\leq i\leq d_1+d_2$, is  
the transpose of the row vector
\[
(a_{1i},\cdots ,a_{d_1i},b_{1i},\ldots,b_{d_2i}).
\]
Since $S$ is a basis, $A$ is invertible.
Set $I:=\{1,\dots,d_1\}$ and consider the generalized Laplace expansion
$$
\det(A)=\sum_{\substack{J\subseteq \{1,\dots,d_1+d_2\}\\ |J|=d_1}} (-1)^{\sum_{i\in I} i+\sum_{j\in J} j} \det A_{I,J}\det A_{I',J'},
$$
where $I':=\{d_1+1,\ldots,d_1+d_2\}$,  $J'$ is the complement of $J$ in $\{1,\ldots,d_1+d_2\}$, and $A_{I,J}$ (respectively, $A_{I',J'}$) is the 
minor of $A$ corresponding to rows and columns indexed 
by $I$ and $J$ (respectively, by $I'$ and $J'$). Since $\det(A)\neq 0$, there exists $J$ with $\det A_{I,J}\det A_{I',J'}\neq 0$. 
The desired sets are $B_1:=\{(v_j,v'_j): j\in J\}$ and $B_2:=\{(v_j,v'_j): j\in J'\}$. 
\end{proof}
Lemma~\ref{Genral-decomposition} and induction yield the following.
\begin{lemma}\label{Lablace-identity} Let $V$ be a finite dimensional vector space over an arbitrary field and 
let 
$S$ be a basis for $V^{\oplus k}$ where $k\geq 1$. Then $S$ can be partitioned into subsets $B_1,\dots, B_k$ such that 
each of the sets $
\{\pi_\ell(w): w\in B_\ell\}
$
is a basis of $V$
for $1\leq \ell\leq k$, where
$\pi_{\ell}(v_1,\dots,v_k):=v_\ell$. 
\end{lemma}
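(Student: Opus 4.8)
To prove Lemma~\ref{Lablace-identity}, the plan is to argue by induction on $k$, at each stage using Lemma~\ref{Genral-decomposition} to peel off the last copy of $V$. The base case $k=1$ is trivial: then $V^{\oplus 1}=V$ and $\pi_1$ is the identity map, so $B_1:=S$ is already a basis of $V$.

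For the inductive step I would assume the assertion for $k-1$ and let $S$ be a basis of $V^{\oplus k}$. Decompose $V^{\oplus k}=V_1\oplus V_2$ with $V_1:=V^{\oplus(k-1)}$ (the first $k-1$ coordinates) and $V_2:=V$ (the last coordinate), and apply Lemma~\ref{Genral-decomposition}. This splits $S$ as a disjoint union $S=C\sqcup B_k$ with $|C|=(k-1)\dim V$ and $|B_k|=\dim V$, so that the images of the vectors of $C$ under the projection $\rho\colon V^{\oplus k}\to V^{\oplus(k-1)}$ onto the first $k-1$ coordinates form a basis of $V^{\oplus(k-1)}$, while $\{\pi_k(w):w\in B_k\}$ is a basis of $V$. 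Since $|C|=\dim V^{\oplus(k-1)}$ and $\rho(C)$ is a (in particular linearly independent) basis of $V^{\oplus(k-1)}$, the restriction $\rho|_C$ is injective, hence a bijection of $C$ onto $\rho(C)$. Applying the inductive hypothesis to the basis $\rho(C)$ of $V^{\oplus(k-1)}$ gives a partition $\rho(C)=D_1\sqcup\cdots\sqcup D_{k-1}$ with $\{\pi_\ell(d):d\in D_\ell\}$ a basis of $V$ for $1\le\ell\le k-1$. Transporting this partition back through $\rho|_C$, set $B_\ell:=\{w\in C:\rho(w)\in D_\ell\}$ for $1\le\ell\le k-1$. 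As the $\ell$-th coordinate of any $w\in V^{\oplus k}$ coincides with the $\ell$-th coordinate of $\rho(w)$ when $\ell\le k-1$, we obtain $\{\pi_\ell(w):w\in B_\ell\}=\{\pi_\ell(d):d\in D_\ell\}$, a basis of $V$. Then $B_1,\dots,B_{k-1},B_k$ is the desired partition of $S$.

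I do not expect a genuine obstacle here: the substance of the statement is entirely carried by Lemma~\ref{Genral-decomposition}, and the induction is pure bookkeeping. The one point that deserves a word is the transfer of the partition of $\rho(C)$ to a partition of $C$, which rests on the injectivity of $\rho|_C$; this is forced by the equality $|C|=\dim V^{\oplus(k-1)}$ together with the linear independence of $\rho(C)$. One may also wish to dispose of the degenerate case $\dim V=0$ separately (then $S=\varnothing$ and the empty partition works), or simply note that Lemma~\ref{Genral-decomposition} holds verbatim when one of the summands is zero-dimensional.
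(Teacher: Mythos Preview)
Your argument is correct and is precisely the approach the paper takes: the paper's proof is the single sentence ``Lemma~\ref{Genral-decomposition} and induction yield the following,'' and you have carefully spelled out that induction, including the point about injectivity of $\rho|_C$ needed to transport the partition back. There is nothing to add.
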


The following simple lemma was also used in~\cite{BMS}.
 \begin{lemma}\label{ineq}
Let $a_0, \dots,  a_{ m -1}$ be non-negative real numbers with $\sum_{\ell=0}^{m-1}a_\ell=m$. Assume that for any $ 0  \le \ell \le m-1$ we have $ a_\ell + \cdots + a_{m-1} \le m-\ell$. Then for any decreasing sequence $x_0 \ge \cdots \ge x_{m-1}$, we have 
$ \sum_{\ell=0}^{m-1 } a_\ell x_\ell \ge \sum_{\ell=0}^{m-1} x_\ell$.
\end{lemma}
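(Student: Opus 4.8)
The plan is to restate the hypothesis in terms of the partial sums of the $a_\ell$ and then apply summation by parts. Set $A_\ell := a_0 + a_1 + \cdots + a_{\ell-1}$ for $0 \le \ell \le m$, with the convention $A_0 = 0$; in particular $A_m = \sum_{\ell=0}^{m-1} a_\ell = m$. Since $a_\ell + \cdots + a_{m-1} = m - A_\ell$, the assumption $a_\ell + \cdots + a_{m-1} \le m-\ell$ is precisely equivalent to
\[
A_\ell \ge \ell \qquad \text{for all } 0 \le \ell \le m .
\]

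Next I would apply Abel summation to both $\sum_{\ell=0}^{m-1} a_\ell x_\ell$ and $\sum_{\ell=0}^{m-1} x_\ell$. Writing $a_\ell = A_{\ell+1} - A_\ell$ and shifting indices, one obtains the identity
\[
\sum_{\ell=0}^{m-1} a_\ell x_\ell = m\, x_{m-1} + \sum_{k=1}^{m-1} A_k (x_{k-1} - x_k),
\]
using $A_0 = 0$ and $A_m = m$ for the boundary terms. The same identity applied with every $a_\ell$ replaced by $1$ (so that $A_k$ is replaced by $k$) gives $\sum_{\ell=0}^{m-1} x_\ell = m\, x_{m-1} + \sum_{k=1}^{m-1} k (x_{k-1} - x_k)$. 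Subtracting the two identities, the boundary terms $m\,x_{m-1}$ cancel and we are left with
\[
\sum_{\ell=0}^{m-1} a_\ell x_\ell - \sum_{\ell=0}^{m-1} x_\ell = \sum_{k=1}^{m-1} (A_k - k)(x_{k-1} - x_k).
\]
Since $A_k \ge k$ for every $k$ and $x_{k-1} - x_k \ge 0$ because $(x_\ell)$ is non-increasing, every summand on the right-hand side is non-negative, and the inequality follows.

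Every step here is elementary, so there is no genuine obstacle; the only points requiring mild care are the index bookkeeping in the Abel summation and the observation that the boundary terms cancel exactly (thanks to $A_m = m$), so that no positivity assumption on the $x_\ell$ is needed. As an alternative one could argue by induction on $m$: taking $\ell = 1$ in the hypothesis forces $a_0 \ge 1$, hence $a_0 x_0 \ge x_0 + (a_0 - 1)x_1$, and absorbing the surplus $a_0 - 1 \ge 0$ into the coefficient of $x_1$ yields a length-$(m-1)$ sequence satisfying the same hypotheses relative to the decreasing sequence $x_1 \ge \cdots \ge x_{m-1}$, to which the inductive hypothesis applies; adding back $x_0$ gives the claim. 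I would present the summation-by-parts argument as the main proof since it is the shortest.
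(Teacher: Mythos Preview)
Your proof is correct and follows essentially the same Abel-summation approach as the paper: the paper defines $A_\ell:=\sum_{j\ge\ell}a_j$ (tail sums) rather than your initial sums, writes $\sum a_\ell x_\ell=A_0x_0+\sum_{\ell\ge 1}A_\ell(x_\ell-x_{\ell-1})$, and then uses $A_\ell\le m-\ell$ together with $x_\ell-x_{\ell-1}\le 0$ to bound this below by $\sum x_\ell$. The two arguments are the same computation up to the reindexing $A_\ell^{\text{yours}}=m-A_\ell^{\text{paper}}$.
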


\begin{proof}  Define $A_\ell:=\sum_{j=\ell}^{m-1}a_j$ 
for  $0 \leq \ell\leq m-1$
and set $A_m:=0$. Then $A_\ell\leq m-\ell$ and $A_0=m$. Set $\Delta A_\ell:=A_\ell-A_{\ell+1}$ and $\Delta x_\ell:=x_\ell-x_{\ell-1}$. Since $\Delta x_\ell\leq 0$, we have 
\[
\sum_{\ell=0}^{m-1 } a_\ell x_\ell =\sum_{\ell=0}^{m-1}\Delta A_\ell x_\ell
=A_0x_0+\sum_{\ell=1}^{m-1}A_\ell\Delta x_\ell
 \ge m x_{0}+ \sum_{\ell=1}^{m-1} (m-\ell)\Delta x_\ell = \sum_{\ell=0}^{m-1} x_\ell.
\qedhere
\]
\end{proof}

For the proof of Theorem~\ref{partial-order-Lie} we need some standard facts from the \emph{orbit method} for describing irreducible representations of finite $p$-groups. The tools that we need from the orbit method can be found in various references, including~\cite{Kazhdan},~\cite{HoweII}, and~\cite{Boyarchenko}.  For the reader's convenience, we briefly review the pertinent results from the orbit method (we follow~\cite{Boyarchenko} closely; see also~\cite[Sec. 3.2]{BMS19}). 
Let $\mathfrak f$ and $\exp(\mathfrak f)$ be as in Section~\ref{sec:intro}. Then we can define a coadjoint action of $\exp(\mathfrak f)$ on
$\widehat{\mathfrak f}:=\Hom_\Z(\mathfrak f,\C^*)$
as follows. For $x\in\exp(\mathfrak f)=\mathfrak f$ and $\theta\in \widehat{\mathfrak f}$, we define $\theta^x\in\widehat{\mathfrak f}$ by 
\[\theta^x(y):=\theta\left(\sum_{k\geq 0} \frac{\ad_x^k(y)}{k!}\right),\quad\text{ for } y\in \mathfrak f. 
\]
Note that only finitely many summands are nonzero. 
\begin{theorem}\label{Kirillov}
Assume that $p$ is an odd prime that is strictly larger than the nilpotency class of $\mathfrak f$. 
Then there exists a bijection between the orbits of the coadjoint action on $ \widehat{\mathfrak f}$ and characters of irreducible representations of $\exp(\mathfrak f)$.
Furthermore, if $\rho_\Theta$ denotes the irreducible representation corresponding to the coadjoint orbit $\Theta\subseteq\widehat{\mathfrak f}$, then the following statements hold. 
\begin{itemize}
\item[\rm (a)]
The character of $\rho_\Theta$ is given by
$
\displaystyle\chi_\Theta(x):=|\Theta|^{-1/2}\sum_{\theta\in\Theta}\theta(x)
$ for $x\in \exp(\mathfrak f)=\mathfrak f$.
\item[\rm (b)]
$\displaystyle \dim(\rho_\Theta)=
\left(\frac{\#\mathfrak f}{\# \mathrm{Stab}_\mathfrak f(\theta_0)}\right)^{1/2}
$
for $\theta_0\in\Theta$, where
$
\mathrm{Stab}_\mathfrak f(\theta_0):=
\left\{
x\in\mathfrak f\,:\,
\theta_0([x,\mathfrak f])=1\right\}
.
$
\item[(c)] $\rho_\Theta(g)$ is multiplication by the scalar $\theta(g)$ for $g\in \ZZ(\exp(\mathfrak f))$ and $\theta\in\Theta$. 
\end{itemize}

\end{theorem}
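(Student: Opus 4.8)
The plan is to run the standard adaptation of Kirillov's orbit method to $p$-groups of nilpotency class below $p$, following Howe, Kazhdan, and Boyarchenko; it rests on four ingredients: the consequences of the Baker--Campbell--Hausdorff formula, an orbit-counting identity, the existence of polarizations, and Mackey's irreducibility criterion.

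\emph{Preliminaries from BCH.} Since $p$ exceeds the nilpotency class $c$ of $\mathfrak{f}$, every denominator appearing in the BCH series and in $e^{\ad_x}(y):=\sum_{k\ge 0}\ad_x^{k}(y)/k!$ is invertible, so these operations make sense on $\mathfrak{f}$ and $\exp\colon\mathfrak{f}\to\exp(\mathfrak{f})$ is a bijection of underlying sets. A short BCH computation gives $x*y*(-x)=e^{\ad_x}(y)$, so that conjugation in $\exp(\mathfrak{f})$ is transported by $\exp$ to the adjoint action $\mathrm{Ad}(\exp x)=e^{\ad_x}$ of $\exp(\mathfrak{f})$ on $\mathfrak{f}$; dually, the coadjoint action $\theta\mapsto\theta^{x}$ on $\widehat{\mathfrak{f}}$ is the contragredient of the conjugation action on $\mathfrak{f}$. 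For every $g\in\exp(\mathfrak{f})$ the numbers of $g$-fixed points on the finite abelian group $\mathfrak{f}$ and on its Pontryagin dual $\widehat{\mathfrak{f}}$ agree, so Burnside's lemma yields $\#\{\text{coadjoint orbits}\}=\#\{\text{conjugacy classes of }\exp(\mathfrak{f})\}=\#\mathrm{Irr}(\exp(\mathfrak{f}))$. I would also record here that $\mathrm{Stab}_{\exp(\mathfrak{f})}(\theta_0)=\exp(\mathrm{Stab}_{\mathfrak{f}}(\theta_0))$, with $\mathrm{Stab}_{\mathfrak{f}}(\theta_0)$ as in the statement: the inclusion $\supseteq$ is clear from $e^{\ad_x}y-y\in[x,\mathfrak{f}]$, and the reverse follows from $\exp(x)^{n}=\exp(nx)$ together with the fact that the polynomial $n\mapsto\theta_0\!\left(\sum_{k\ge 1}n^{k}\ad_x^{k}(y)/k!\right)$, of degree $<p$, vanishes on all of $\F_p$. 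Orbit--stabilizer then gives $|\Theta|=\#\mathfrak{f}/\#\mathrm{Stab}_{\mathfrak{f}}(\theta_0)$.

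\emph{From orbits to representations.} Fix $\theta_0\in\Theta$ and set $B_{\theta_0}(x,y):=\theta_0([x,y])$, a $\C^{\ast}$-valued alternating form on $\mathfrak{f}$ with radical $\mathrm{Stab}_{\mathfrak{f}}(\theta_0)$. Refining the lower central series to a chain of ideals and summing the radicals of the restrictions of $B_{\theta_0}$ to the members of the chain produces, in the usual Vergne fashion, a Lie subalgebra $\mathfrak{h}$ with $\mathrm{Stab}_{\mathfrak{f}}(\theta_0)\subseteq\mathfrak{h}$, $\theta_0([\mathfrak{h},\mathfrak{h}])=1$, and $[\mathfrak{f}:\mathfrak{h}]^{2}=\#\mathfrak{f}/\#\mathrm{Stab}_{\mathfrak{f}}(\theta_0)$ — a maximal isotropic subalgebra, or \emph{polarization}, at $\theta_0$. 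As $\theta_0$ is an additive character of $\mathfrak{f}$ trivial on $[\mathfrak{h},\mathfrak{h}]$, BCH shows that $\lambda_{\theta_0}\colon H:=\exp(\mathfrak{h})\to\C^{\ast}$, $\exp(x)\mapsto\theta_0(x)$, is a linear character of $H$, and I set $\rho_\Theta:=\mathrm{Ind}_{H}^{\exp(\mathfrak{f})}\lambda_{\theta_0}$. Its dimension is $[\exp(\mathfrak{f}):H]=[\mathfrak{f}:\mathfrak{h}]=\bigl(\#\mathfrak{f}/\#\mathrm{Stab}_{\mathfrak{f}}(\theta_0)\bigr)^{1/2}=|\Theta|^{1/2}$, which proves (b).

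\emph{Irreducibility, well-definedness, and the character formula.} The crux — and the step I expect to cost the most to write out carefully — is that $\rho_\Theta$ is irreducible and independent of the choices of $\theta_0\in\Theta$ and of $\mathfrak{h}$. Irreducibility comes from Mackey's criterion: for $g\in\exp(\mathfrak{f})\setminus H$ one must check that $\lambda_{\theta_0}$ and its $g$-conjugate are disjoint on $H\cap{}^{g}H$, which in the nilpotent case reduces, by maximal isotropy of $\mathfrak{h}$ for $B_{\theta_0}$, to a product of one-dimensional orthogonality relations. Independence I would obtain by computing the character of $\mathrm{Ind}_{H}^{\exp(\mathfrak{f})}\lambda_{\theta_0}$: transporting the induced-character formula $\chi(g)=|H|^{-1}\sum_{x:\,x^{-1}gx\in H}\lambda_{\theta_0}(x^{-1}gx)$ onto $\mathfrak{f}$ and using $x^{-1}gx=e^{-\ad_x}(g)$, the nonzero terms organize precisely, with the right multiplicities, into the coadjoint orbit $\Theta$, and the normalization comes out to $\chi(g)=|\Theta|^{-1/2}\sum_{\theta\in\Theta}\theta(g)$ — which is formula (a); since a finite-group representation is determined by its character, this pins $\rho_\Theta$ down. (Equivalently, the whole theorem can be set up by induction on $\#\mathfrak{f}$ via a central line $\mathfrak{z}_0\subseteq\ZZ(\mathfrak{f})\cap[\mathfrak{f},\mathfrak{f}]$ of order $p$, reducing either to the quotient $\mathfrak{f}/\mathfrak{z}_0$ or, when $\theta_0|_{\mathfrak{z}_0}\neq 1$, to a proper centralizer subalgebra; this is in effect how one proves the existence of polarizations and the Mackey estimate.) The bijection now assembles itself: the class functions $\chi_\Theta(x):=|\Theta|^{-1/2}\sum_{\theta\in\Theta}\theta(x)$ are conjugation-invariant (the coadjoint action permutes $\Theta$) and pairwise orthonormal (from $\sum_{y\in\mathfrak{f}}\theta(y)\overline{\theta'(y)}=\#\mathfrak{f}\cdot[\theta=\theta']$), there are exactly as many of them as irreducible characters, and each has just been exhibited as a genuine irreducible character; hence $\Theta\mapsto\chi_\Theta$ is a bijection onto $\mathrm{Irr}(\exp(\mathfrak{f}))$. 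Finally (c) is immediate: for $g\in\ZZ(\exp(\mathfrak{f}))=\exp(\ZZ(\mathfrak{f}))$ the scalar $\theta(g)$ is constant over $\theta\in\Theta$, so $\chi_\Theta(g)=|\Theta|^{1/2}\theta(g)=\chi_\Theta(1)\,\theta(g)$, and irreducibility of $\rho_\Theta$ forces $\rho_\Theta(g)$ to equal the scalar $\theta(g)$.
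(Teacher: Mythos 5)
The paper does not prove this theorem at all: it is quoted as a standard fact from the orbit method, with the proof deferred to the cited references (Howe, Kazhdan, Boyarchenko, and Section~3.2 of the authors' earlier paper \cite{BMS19}). So there is no in-paper argument to compare against; what you have written is a sketch of the standard proof that lives in those references, and as a sketch it is essentially sound. Your preliminaries are correct and complete where they matter most for this paper's use of the theorem: the identification of conjugation with $e^{\ad_x}$, the Burnside/Pontryagin count matching coadjoint orbits with conjugacy classes, and in particular the finite-difference argument (using $\deg<p$) showing $\mathrm{Stab}_{\exp(\mathfrak f)}(\theta_0)=\exp(\mathrm{Stab}_{\mathfrak f}(\theta_0))$, which is exactly what makes the dimension formula in (b) come out in terms of $\{x:\theta_0([x,\mathfrak f])=1\}$. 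Parts (a)+(c) then follow as you say once (b) and irreducibility are in hand.

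Two steps are asserted rather than proved, and they are the ones that carry the real content. First, the Vergne polarization: $\mathfrak f$ is a Lie ring over $\Z/p^k$, not a Lie algebra over a field, so both the claim that the sum of radicals along a chain of ideals is a subalgebra of the right size and the claim that a maximal isotropic subgroup $L$ for a nondegenerate alternating bicharacter on a finite abelian group satisfies $\#L=(\#A)^{1/2}$ need separate (true, but nontrivial) arguments; the degree-$<p$ hypothesis enters again here. Second, the irreducibility/character computation: your Mackey reduction is waved at, and the cleaner route --- which is also redundant with it --- is the one you mention in passing: compute the induced character, identify it with $\chi_\Theta=|\Theta|^{-1/2}\sum_{\theta\in\Theta}\theta$, and deduce $\langle\chi_\Theta,\chi_{\Theta'}\rangle=\delta_{\Theta\Theta'}$ from orthogonality of characters of the abelian group $(\mathfrak f,+)$; that single computation gives irreducibility, well-definedness, and (a) simultaneously and is how Boyarchenko's treatment (which the paper says it follows) avoids Mackey's criterion. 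Neither gap is a wrong turn, but a self-contained proof would have to fill both; for the purposes of this paper it is legitimate, and intended, to simply cite the references instead.
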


Recall that in this section $\g:=\g_\prec$. Our next goal is to use the orbit method to study irreducible representations of $\GG_R:=\exp(\g_R)$. 
\begin{remark}
\label{rmk:5.2}
The nilpotency class of 
$\g$ is 
$\max\{\alpha(i,j): i\prec j\}+1$, where $\alpha(i,j)$ is defined as in~\eqref{eq:alphai,j}. To see this, note that the $l$-th term in the lower central series of $\g$ is the $\Z$-span of the $e_{ij}$ where $\alpha(i,j)\geq l$. 
It is also straightforward to verify that $\ZZ(\g)$ is the $\Z$-span of the $e_{ij}$ where $(i,j)\in \Iex$ 
(see~\cite[Lemma 4.2]{BMS19}). The same argument proves that $\ZZ(\g_R)$ is a free $R$-module generated by the $e_{ij}$ for $(i,j)\in\Iex$.  
\end{remark}

By the isomorphism~\eqref{eq:RRhat}, elements of
 $\widehat{\g_R}$ are of the form $\psi_\bb$ for a vector $\bb:=(b_{ij})_{i\prec j}$ with entries in $R$, where
\begin{equation}
\psi_{\bb}\left(\sum_{i\prec j} x_{ij}e_{ij}\right):=\psi\left(\sum_{i\prec j}b_{ij}x_{ij}\right).
\end{equation}
\begin{definition}
\label{dfn:level}
We define the level of  $b:=\tilde{b}+\mathfrak{p}^d\in R$ to be the smallest $k$ such that $\tilde{b}\not\in \mathfrak{p}^{k+1}$, and we denote it by $\lev(b)$.
\end{definition}

\begin{proposition}\label{level-prop-Min} Let $b_{ij}=\tilde{b}_{ij}+\mathfrak{p}^d$ for $i\prec j$ be  elements of $R$. Set $\bb:=(b_{ij})_{i\prec j}$, and let $\rho_{_\bb}$ be the irreducible representation of $\GG_R$ corresponding to the orbit of $\psi_\bb$ under the coadjoint action.  Then 
$
\dim \rho_{_\bb}\geq p^{f(d-\lev(b_{i_1j_1}))\alpha(i_1,j_1)}
$
for all $(i_1,j_1)\in\Iex$. 
\end{proposition}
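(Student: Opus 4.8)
The plan is to use part (b) of the Kirillov orbit method (Theorem~\ref{Kirillov}) to get a lower bound on $\dim\rho_{\bb}$, via an upper bound on $\#\mathrm{Stab}_{\g_R}(\psi_\bb)$. Recall that
\[
\mathrm{Stab}_{\g_R}(\psi_\bb)=\{x\in\g_R\,:\,\psi_\bb([x,\g_R])=1\}=\{x\in\g_R\,:\,\psi_\bb([x,y])=0\text{ for all }y\in\g_R\},
\]
so the stabilizer is the radical of the alternating bilinear form $(x,y)\mapsto\psi_\bb([x,y])$ on the free $R$-module $\g_R$. Fix an extreme pair $(i_1,j_1)\in\Iex$. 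The idea is that pairing against a single well-chosen coordinate direction already forces many coordinates of $x$ to lie in a small ideal of $R$. Concretely, for each $k$ with $i_1\prec k\prec j_1$, the bracket $[e_{i_1 k},e_{k j_1}]=e_{i_1 j_1}$ (up to sign), and no other bracket of basis vectors contributes an $e_{i_1 j_1}$-component because $(i_1,j_1)$ is extreme. This gives, for $x=\sum x_{ab}e_{ab}$, a relation of the shape $\psi\bigl(b_{i_1 j_1}\sum_{i_1\prec k\prec j_1} x_{i_1 k}\,(\text{coeff})\bigr)=1$ after pairing $x$ against suitable elements $y$.

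First I would make this precise by choosing the test elements $y=e_{k j_1}$ and $y=e_{i_1 k}$ for each intermediate $k$, and extracting from $\psi_\bb([x,y])=1$ a system of congruences on the entries $x_{i_1 k}$ and $x_{k j_1}$. The key point is Lemma~\ref{Character-local}: since $b_{i_1 j_1}$ has level $\lev(b_{i_1 j_1})=:\ell_0$, the condition $\psi(b_{i_1 j_1} c)=1$ for $c\in R$ is equivalent to $c\in\Ann(b_{i_1 j_1}R)=\mathfrak p^{d-\ell_0}/\mathfrak p^d$, an ideal of index $p^{f\ell_0}$ in $R$. Summing over the $\alpha(i_1,j_1)$ intermediate indices $k$, pairing against the $e_{kj_1}$ shows the coordinates $x_{i_1 k}$ (for $i_1\prec k\prec j_1$) are constrained to lie in a coset structure cutting down the count by a factor of at least $p^{f(d-\ell_0)\alpha(i_1,j_1)}$ — one factor of $p^{f(d-\ell_0)}$ per intermediate $k$, since varying the ``free'' part of $x_{i_1k}$ inside $\mathfrak p^{d-\ell_0}$ still satisfies the constraint. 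Hence $\#\mathrm{Stab}_{\g_R}(\psi_\bb)\le \#\g_R\cdot p^{-2f(d-\ell_0)\alpha(i_1,j_1)}$, so that by Theorem~\ref{Kirillov}(b),
\[
\dim\rho_\bb=\left(\frac{\#\g_R}{\#\mathrm{Stab}_{\g_R}(\psi_\bb)}\right)^{1/2}\ge p^{f(d-\ell_0)\alpha(i_1,j_1)},
\]
which is the claim.

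The care needed — and the main obstacle — is in the bookkeeping of the two ``halves'' $x_{i_1k}$ and $x_{kj_1}$: pairing $x$ against $e_{kj_1}$ controls $x_{i_1k}$ while pairing against $e_{i_1k}$ controls $x_{kj_1}$, and one must check that these give \emph{independent} constraints (they involve disjoint sets of coordinates of $x$, since $i_1$ is $\prec$-minimal and $j_1$ is $\prec$-maximal, so $e_{i_1k}$ is never equal to $e_{kj_1}$, nor is there any overlap with $e_{i_1j_1}$ itself as $\alpha(i_1,j_1)\ge 1$ is needed — if $\alpha(i_1,j_1)=0$ the bound is trivially $1$). One should also verify that brackets $[e_{i_1k},e_{k'j_1}]$ with $k\ne k'$, and all brackets involving other basis elements, never produce an $e_{i_1j_1}$-component: this is exactly the extremality of $(i_1,j_1)$, since a product $e_{i_1 j_1}$ would require a chain $i_1\prec k\prec j_1$, and if $i_1$ (resp.\ $j_1$) were not minimal (resp.\ maximal) there could be longer contributing chains, but that does not happen here. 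Finally I would invoke $p>\max\{\alpha(i,j):i\prec j\}+1$ (the hypothesis of Theorem~\ref{partial-order-Lie}, equivalently $p$ exceeds the nilpotency class of $\g$ by Remark~\ref{rmk:5.2}) to ensure the Lazard/orbit correspondence of Theorem~\ref{Kirillov} applies and that the coadjoint orbit of $\psi_\bb$ has the dimension computed above; the exponential terms in $\theta^x$ beyond the linear one vanish on the relevant subspace, so that the stabilizer computation for the coadjoint action coincides with the radical of the form $\psi_\bb([\cdot,\cdot])$.
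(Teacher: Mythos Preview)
Your approach is essentially the same as the paper's: bound $\#\mathrm{Stab}_{\g_R}(\psi_\bb)$ from above by pairing against the test directions $e_{kj_1}$ and $e_{i_1k}$ for $i_1\prec k\prec j_1$, obtain $2\alpha(i_1,j_1)$ linear equations in which $b_{i_1j_1}$ is the coefficient of one distinguished coordinate, and count solutions using $\#\ker(b_{i_1j_1}\cdot)=p^{f\ell_0}$. The paper writes out the full linear system $L_{ij}=0$ first and then specializes, but the content is identical.

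Two points of imprecision worth tightening. First, the equation you get from $y=e_{kj_1}$ is not of the shape $\psi(b_{i_1j_1}\cdot x_{i_1k})=1$ alone: it is $\sum_{m\prec k} b_{mj_1}x_{mk}=0$ (here maximality of $j_1$ kills the other half of the linear form), so other $b_{mj_1}$'s appear. This does not affect the count---you still solve $b_{i_1j_1}x_{i_1k}=(\text{known})$ with at most $p^{f\ell_0}$ solutions---but your emphasis on ``no other bracket contributes an $e_{i_1j_1}$-component'' is a red herring; what extremality actually buys is the vanishing of one of the two sums in each $L_{ij}$, which makes it transparent that the $2\alpha(i_1,j_1)$ ``dependent'' coordinates $x_{i_1k},x_{kj_1}$ never appear on the right-hand sides. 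Second, to pass from $\psi_\bb([x,e_{kj_1}])=1$ to the \emph{equation} $L_{kj_1}(x)=0$ you need to pair against all $r\cdot e_{kj_1}$, $r\in R$, and use primitivity of $\psi$; your invocation of Lemma~\ref{Character-local} and $\Ann(b_{i_1j_1}R)$ (which has cardinality $p^{f\ell_0}$, not index) is not quite the right mechanism here.
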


\begin{proof} 
We use Theorem~\ref{Kirillov}(b). 
Let $x=\sum_{i\prec j}x_{ij}e_{ij}\in \g_R$ be an element of $\mathrm{Stab}_{\g_R}(\psi_{\bb})$. Then for any $y=\sum_{i\prec j}y_{ij}e_{ij}\in \g_R$ we have   
\begin{equation*}
\begin{split}
1=\psi_{\bb}([x,y])&=\psi_{\bb}\left(\sum_{i\prec j,\, k\prec l} x_{ij}y_{kl}[e_{ij},e_{kl}]\right)\\
&=\psi\left(\sum_{i\prec j}\sum_{i\prec k\prec j}b_{ij}(x_{ik}y_{kj}-x_{kj}y_{ik})\right)
=\psi\left(\sum_{i\prec j}\left(\sum_{k\prec i}b_{kj}x_{ki}-\sum_{j\prec l}b_{il}x_{jl}\right)y_{ij}\right).
\end{split}
\end{equation*}
Since the $y_{ij}\in R$ are arbitrary and $\psi$ is primitive, the stabilizer of 
$\psi_{_\bb}$ is equal to the solution set of the system of linear equations 
$\mathscr L:=\left\{L_{ij}(x_{st})=0\ :\ i,j\in[n]\text{ and }i\prec j\right\}$, where
\begin{equation}\label{Linear-eq1}
L_{ij}(x_{st}):=\sum_{k\prec i}b_{kj}x_{ki}-\sum_{j\prec l}b_{il}x_{jl}.
\end{equation}
In particular since $(i_1,j_1)\in\Iex$, the linear forms $L_{i_1i}(x_{st})$ and $L_{jj_1}(x_{st})$, for $i_1\prec i\prec j_1$ and $i_1\prec j\prec j_1$, yield   $2\alpha(i_1,j_1)$ linear equations that can be written as
\begin{equation}\label{Linear-eq2}
b_{i_1j_1}x_{ij_1}=-\sum_{i\prec k\neq j_1}b_{i_1k}x_{ik}\quad\text{ and }\quad
b_{i_1j_1}x_{i_1j}=-\sum_{i_1\neq l\prec j}b_{lj_1}x_{lj}.
\end{equation}

The cardinality  of the kernel of the map $R\to R,\, x\mapsto b_{i_1j_1}x$, is $p^{f\lev(b_{i_1j_1})}$. 
To see this, note that the ideal generated by $b_{i_1j_1}$ is $\mathfrak{p}^{\ell}/\mathfrak{p}^d$, with $\ell=\lev(b_{i_1j_1})$. The 
above fact now follows from $\Ann(\mathfrak{p}^\ell/\mathfrak{p}^d)=\mathfrak{p}^{(d-\ell)}/\mathfrak{p}^d$ in combination with $\# (\mathfrak{p}^i/\mathfrak{p}^{i+1})=p^f$ for $0 \le i \le d-1$.

It follows that for any choice of values for the variables on the right hand side of~\eqref{Linear-eq2}, there exists at most $p^{f\lev(b_{i_1j_1})}$ choices for each of the $2\alpha(i_1,j_1)$ variables 
$x_{i_1i}$ and $x_{jj_1}$ such that the corresponding equation in~\eqref{Linear-eq2} is satisfied.  
Therefore the number of solutions of the  linear system $\mathscr L$ is  
at most 
\begin{equation}\label{Stab}
p^{fd(|I|-2\alpha(i_1,j_1))+2f\alpha(i_1,j_1)
\lev(b_{i_1j_1})},
\end{equation}
where  $I:=\{(i,j): i\prec j\}$.
Using Theorem~\ref{Kirillov}(b) it is now straightforward to verify that 
$
\dim \rho_{_\bb}\geq p^{f(d-\lev(b_{i_1j_1}))\alpha(i_1,j_1)}
$.
\end{proof}

\begin{lemma}\label{level-lemma} Let $b=\tilde{b}+\mathfrak{p}^d\in R$. Fix $(i_1,j_1)\in \Iex$ and let ${\bf b}:=(b_{ij})_{i\prec j}$ where $b_{i_1j_1}=b$ and $b_{ij}=0$ for all other pairs $(i,j)$. 
 Then the dimension of the irreducible representation of $\GG_R$ that corresponds to the coadjoint orbit of $\psi_{\bf b}\in\widehat{\g_R}$  is equal to $q^{(n-\lev(b)) \alpha(i_1,j_1)}$.
\end{lemma}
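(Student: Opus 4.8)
The plan is to evaluate $\dim\rho_{\bb}$ directly by means of Theorem~\ref{Kirillov}(b), which gives $\dim\rho_{\bb}=\bigl(\#\g_R/\#\mathrm{Stab}_{\g_R}(\psi_{\bb})\bigr)^{1/2}$. Since $\g_R$ is free over $R$ on the basis $\{e_{ij}:i\prec j\}$ and $\#R=q^d$, the numerator equals $q^{d\#I}$ with $I:=\{(i,j):i\prec j\}$, so the entire problem reduces to counting the stabilizer. For this I would reuse, verbatim, the description obtained in the proof of Proposition~\ref{level-prop-Min}: $\mathrm{Stab}_{\g_R}(\psi_{\bb})$ is the set of $x=\sum_{i\prec j}x_{ij}e_{ij}\in\g_R$ that solve the linear system $\mathscr{L}=\{L_{ij}=0:i\prec j\}$, where $L_{ij}=\sum_{k\prec i}b_{kj}x_{ki}-\sum_{j\prec l}b_{il}x_{jl}$.

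The next step is to substitute the special $\bb$ of the lemma, which has $b_{i_1j_1}=b$ as its only nonzero coordinate. Then $L_{ij}$ vanishes identically unless one of its two sums actually involves $b_{i_1j_1}$, and a direct inspection shows that this happens precisely for the pairs $(i,j_1)$ with $i_1\prec i\prec j_1$, where $L_{ij_1}=b\,x_{i_1i}$, and for the pairs $(i_1,j)$ with $i_1\prec j\prec j_1$, where $L_{i_1j}=-b\,x_{jj_1}$; here $(i_1,j_1)\in\Iex$ is used, as minimality of $i_1$ and maximality of $j_1$ are exactly what make all remaining forms vanish, $L_{i_1j_1}$ included. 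Thus $\mathscr{L}$ reduces to $2\alpha(i_1,j_1)$ nontrivial equations, each of the form $b\,x_{st}=0$ for a single coordinate, and the $2\alpha(i_1,j_1)$ coordinates occurring in them --- the $x_{i_1i}$ with $i_1\prec i\prec j_1$ together with the $x_{jj_1}$ with $i_1\prec j\prec j_1$ --- are pairwise distinct, since $x_{i_1i}=x_{jj_1}$ would force $i_1=j$, contradicting $i_1\prec j$. Hence the solution set of $\mathscr{L}$ splits as a product: the $\#I-2\alpha(i_1,j_1)$ coordinates not occurring in these equations range freely over $R$, while each of the other $2\alpha(i_1,j_1)$ coordinates is confined to $\{r\in R:br=0\}$, which has cardinality $q^{\lev(b)}$ (as noted in the proof of Proposition~\ref{level-prop-Min}). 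Consequently
\[
\#\mathrm{Stab}_{\g_R}(\psi_{\bb})=q^{2\alpha(i_1,j_1)\lev(b)}\cdot q^{d(\#I-2\alpha(i_1,j_1))},
\]
and inserting this into Theorem~\ref{Kirillov}(b) yields
\[
\dim\rho_{\bb}=\Bigl(q^{d\#I}\big/q^{\,2\alpha(i_1,j_1)\lev(b)+d(\#I-2\alpha(i_1,j_1))}\Bigr)^{1/2}=q^{(d-\lev(b))\alpha(i_1,j_1)},
\]
which is the asserted dimension.

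I do not anticipate a genuine obstacle here: given Theorem~\ref{Kirillov}(b) and the stabilizer computation already carried out for Proposition~\ref{level-prop-Min}, the argument is bookkeeping --- in fact it is nothing but the equality case of that proposition, the assumption that $\bb$ has a single nonzero entry forcing the system $\mathscr{L}$ to consist of exactly $2\alpha(i_1,j_1)$ mutually decoupled equations. The one step that needs care is the bookkeeping itself: identifying which forms $L_{ij}$ survive and checking that the coordinates they involve are all distinct, so that the solution count factors cleanly as a product. This is precisely the place where extremality of $(i_1,j_1)$ enters.
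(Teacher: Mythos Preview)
Your proof is correct and follows essentially the same approach as the paper: specialize the linear system $\mathscr{L}$ from the proof of Proposition~\ref{level-prop-Min} to the given $\bb$, observe that the only surviving equations are $L_{ij_1}$ and $L_{i_1j}$ for $i_1\prec i,j\prec j_1$, count the resulting stabilizer exactly, and apply Theorem~\ref{Kirillov}(b). Your write-up is more explicit about why the remaining $L_{st}$ vanish and why the $2\alpha(i_1,j_1)$ constrained coordinates are pairwise distinct, but the argument is the same as the paper's (which simply points to the equality case of~\eqref{Stab}); note also that you have implicitly corrected the paper's ``$n$'' in the exponent to ``$d$'', consistent with how the lemma is used in Lemma~\ref{cons-faith}.
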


\begin{proof}
We write the linear system in the proof of Proposition~\ref{level-prop-Min} explicitly, and determine the number of its solutions. In this case the only nonzero equations in the linear system $\mathscr L$ are the $L_{ij_1}$ and the $L_{i_1j}$ for $i_1\prec i,j\prec j_1$, and they are  of the form $b_{i_1j_1}x_{i_1j}=0$ and $b_{i_1j_1}x_{ij_1}=0$. 
The values of the variables $x_{ij_1}$ and $x_{i_1j}$ can be chosen independently of each other, and the only restriction is that \[
\lev(x_{i_1j}),\lev(x_{ij_1})\ge d-
\lev(b_{i_1j_1})
.
\]
Thus, for each of these variables there are $p^{f\lev(b_{i_1j_1})}$ possible values. 
There is no restriction on the values of the other variables of the linear system and each of them can be chosen arbitrarily, from $p^{fd}$ possible values.
Thus the number of solutions of the linear system is equal to~\eqref{Stab}. 
Since the number of solutions of the linear system is also equal to the cardinality of $\mathrm{Stab}_{\g_R}(\psi_{\bb})$, 
the assertion of the lemma follows from Theorem~\ref{Kirillov}(b).  
\end{proof}

In the next lemma we prove that
$
\mf(\GG_R)\leq \sum_{\ell=0}^{ e-1 }\sum_{(i,j)\in \Iex} fp^{f(d-\ell)\alpha(i,j)}
$.

\begin{lemma}\label{cons-faith} The group $\GG_R$ has a faithful representation of dimension 
\[
\sum_{\ell=0}^{e -1}\sum_{(i,j)\in \Iex} f p^{f(d-\ell)\alpha(i,j)}
.\]
\end{lemma}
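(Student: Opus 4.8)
The plan is to deduce this from Lemma~\ref{central-span-faith}(i): it suffices to exhibit a family of irreducible representations of $\GG_R$ whose central characters, restricted to $\Omega_1(\ZZ(\GG_R))$, span the $\FZ$-vector space $\widehat{\Omega}_1(\ZZ(\GG_R))$, and whose dimensions sum to the claimed value. By Remark~\ref{rmk:5.2}, $\ZZ(\g_R)=\ZZ(\GG_R)$ is the free $R$-module on $\{e_{ij}:(i,j)\in\Iex\}$, so $\Omega_1(\ZZ(\GG_R))=\bigoplus_{(i,j)\in\Iex}\Omega_1(R\,e_{ij})$ and hence $\widehat{\Omega}_1(\ZZ(\GG_R))=\bigoplus_{(i,j)\in\Iex}\widehat{\Omega_1(R)}$. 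Since $e\le d$, we have $\Omega_1(R)=\mathfrak p^{d-e}/\mathfrak p^d$, and by (the proof of) Lemma~\ref{Character-local} the assignment $b\mapsto\psi_b|_{\Omega_1(R)}$ induces an $\F_p$-linear isomorphism $\OO{e}\xrightarrow{\ \sim\ }\widehat{\Omega_1(R)}$; in particular each summand has $\F_p$-dimension $fe$.

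Next I would pick the representations. Fix a uniformizer $\varpi\in\mathfrak p$ and lifts $u_1,\dots,u_f\in\mathcal O$ of an $\F_p$-basis of the residue field $\mathcal O/\mathfrak p$, so that $\{\varpi^\ell u_t:0\le\ell\le e-1,\ 1\le t\le f\}$ reduces to an $\F_p$-basis of $\OO{e}$ with $\lev(\varpi^\ell u_t)=\ell$. For each $(i,j)\in\Iex$ and each pair $(\ell,t)$, let $\mathbf b^{(i,j,\ell,t)}$ be the tuple supported only at $(i,j)$ with value $\varpi^\ell u_t$, and let $\rho_{i,j,\ell,t}$ be the irreducible representation of $\GG_R$ attached to the coadjoint orbit of $\psi_{\mathbf b^{(i,j,\ell,t)}}$. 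By Lemma~\ref{level-lemma}, $\dim\rho_{i,j,\ell,t}=p^{f(d-\ell)\alpha(i,j)}$ (recall $q=p^f$), hence
\[
\dim\left(\bigoplus_{(i,j)\in\Iex}\ \bigoplus_{\ell=0}^{e-1}\ \bigoplus_{t=1}^{f}\rho_{i,j,\ell,t}\right)=\sum_{(i,j)\in\Iex}\sum_{\ell=0}^{e-1}f\,p^{f(d-\ell)\alpha(i,j)},
\]
which is exactly the asserted dimension.

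It remains to verify faithfulness. Since central elements are fixed by the coadjoint action, all members of a coadjoint orbit agree on $\ZZ(\g_R)$, so by Theorem~\ref{Kirillov}(c) the central character of $\rho_{i,j,\ell,t}$ is $\psi_{\mathbf b^{(i,j,\ell,t)}}|_{\ZZ(\g_R)}$. Restricting further to $\Omega_1(\ZZ(\GG_R))=\bigoplus_{(i',j')\in\Iex}\Omega_1(R\,e_{i'j'})$, this character is trivial on every summand except the $(i,j)$-one, on which it equals $x\mapsto\psi(\varpi^\ell u_t\,x)$ for $x\in\Omega_1(R)$. For each fixed $(i,j)$, as $(\ell,t)$ varies these are precisely the images of the $\F_p$-basis $\{\varpi^\ell u_t\bmod\mathfrak p^e\}$ of $\OO{e}$ under the isomorphism $\OO{e}\cong\widehat{\Omega_1(R)}$ of the first paragraph, hence form an $\F_p$-basis of the $(i,j)$-summand; ranging over all $(i,j)\in\Iex$ they therefore form a basis of $\widehat{\Omega}_1(\ZZ(\GG_R))$ and in particular span it. Lemma~\ref{central-span-faith}(i) then yields that the direct sum above is faithful, which completes the proof.

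I expect the only delicate part to be the bookkeeping in the last paragraph — tracking the chain of identifications $\ZZ(\g_R)=\ZZ(\GG_R)\cong R^{\oplus|\Iex|}$, $\widehat{\Omega_1(R)}\cong\OO{e}$, and the decomposition over $\Iex$ — so that the restriction of the central character of $\rho_{i,j,\ell,t}$ corresponds to the functional attached to $\varpi^\ell u_t\bmod\mathfrak p^e$ in the $(i,j)$-slot; there is no substantive difficulty, as the dimension count and the spanning argument are then immediate.
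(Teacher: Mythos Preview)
Your proof is correct and follows essentially the same approach as the paper's: you construct the same family of irreducible representations $\rho_{\mathbf b}$ attached to characters supported at a single extreme pair $(i,j)\in\Iex$ with value $\varpi^\ell u_t$, compute their dimensions via Lemma~\ref{level-lemma}, and verify faithfulness via Lemma~\ref{central-span-faith}(i) using the identification $\widehat{\Omega}_1(\OO{d})\cong\OO{e}$ from Lemma~\ref{Character-local}. The only difference is cosmetic—you invoke Theorem~\ref{Kirillov}(c) explicitly to pin down the central character, whereas the paper leaves this implicit.
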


\begin{proof}
Let $\omega_1,\dots,\omega_f$ be units in $\mathcal{O}$ such that $\{\omega_1+\mathfrak{p},\dots,\omega_f+\mathfrak{p}\}$ forms a basis for $\mathcal{O}/\mathfrak{p}$ over $\F_p$. Set  
$\tilde{b}_{k\ell}:=\omega_k\varpi^\ell$ 
and $b_{k\ell}:=\tilde{b}_{k\ell}+\mathfrak p^e$ 
for $1\leq k\leq f$ and $0\leq \ell\leq e-1$.
The ${b}_{k\ell}$ 
form a basis of the $\F_p$-vector space $\OO{e}$.
Since $\ZZ(\GG_R)\cong\ZZ(\g_\R)$ as abelian groups, 
from Remark~\ref{rmk:5.2} it follows that
\begin{equation}\label{Z(G)}
\widehat{\Omega}_1(\ZZ(\GG_R))\cong\widehat{\Omega}_1(\ZZ(\g_\R))=\bigoplus_{(i,j)\in \Iex} \widehat{\Omega}_1(\OO{d}).
\end{equation}
Note that $\Omega_1(\OO{d})=\mathfrak{p}^{d-e }/\mathfrak{p}^{d}$, so that Lemma~\ref{Character-local} yields an isomorphism of abelian groups 
$
\OO{e}
\cong
\widehat{\Omega}_1(\OO{d}).
$
Given $i_\circ,j_\circ\in [n]$ such that  $i_\circ\prec j_\circ$, we define 
${\bf b}({i_\circ,j_\circ,k,\ell})$, 
 for
  $1\leq k\leq f$ and $0\leq \ell\leq e-1$,
to be the vector in 
 $\bigoplus_{i\prec j}\OO{d}$ with exactly one nonzero component, at the $(i_\circ,j_\circ)$-position, equal to $\tilde b_{k\ell}$.

Since $\ZZ(\g_R)\cong\bigoplus_{(i,j)\in\Iex}\OO{d}$ (see Remark~\ref{rmk:5.2}), from~\eqref{Z(G)} it follows that 
the restrictions of the characters $\psi_{\mathbf b(i_\circ,j_\circ,k,l)}$
to $\ZZ(\GG_R)$ form an $\F_p$-basis 
of $\widehat{\Omega}_1(\ZZ(\GG_R))$. Let $\rho_{\mathbf b(i_\circ,j_\circ,k,\ell)}$ denote the irreducible representation of $\GG_R$ that corresponds to the coadjoint orbit of $\psi_{\mathbf b(i_\circ,j_\circ,k,\ell)}$ (see Theorem~\ref{Kirillov}).
Then by Lemma~\ref{central-span-faith}, the representation
$$\rho:=\bigoplus_{\substack{1\leq k\leq f\\ 0\leq \ell\leq e-1}}\bigoplus_{(i,j)\in \Iex}\rho_{{\bf b}(i,j,k,\ell)}$$
is faithful. The set $\{b_{k\ell}\,:\,1\leq k\leq f,\ 0\leq \ell\leq e-1\}$
contains exactly $f$ elements of any given level $\ell$ where $0\leq \ell\leq e-1$, and
by Lemma~\ref{level-lemma} we have $\dim\rho_{\mathbf b(i,j,k,\ell)}=p^{f(d-\ell)\alpha(i,j)}$ for any $(i,j)\in\Iex$. These imply that
$
\dim\rho=\sum_{\ell=0}^{e -1}\sum_{(i,j)\in \Iex}fq^{(n-\ell)\alpha(i,j)}.$
\end{proof}

\begin{proof}[Proof of the Theorem~\ref{partial-order-Lie}]
By Lemma~\ref{cons-faith} it suffices to prove that
\[
\mf(\GG_R)\geq \sum_{\ell=0}^{ e-1 }\sum_{(i,j)\in \Iex} fp^{f(d-\ell)\alpha(i,j)}
.
\]
Let $\rho$ be a faithful representation of $\GG_R$ of dimension $\mf(\GG_R)$.
By Lemma~\ref{central-span-faith}(ii) and 
Remark~\ref{rmk:mingen}, 
$\rho$ is a direct sum of  $N:=ef(\#\Iex)$ irreducible representations. Thus we can express $\rho$ as
\[\rho=\bigoplus_{k=1}^N\rho_{\psi_{\mathbf a_k}},
\]
where each $\rho_{\psi_{\mathbf a_{k}}}$ is the irreducible representation of $\GG_R$ 
corresponding (according to Theorem~\ref{Kirillov}) to the coadjoint orbit of the character 
$\psi_{\mathbf a_k}$ of $\g_R$ (and $\mathbf a_k$ is a vector in $\bigoplus_{i\prec j}\OO{d}$). 
Furthermore, by Theorem~\ref{Kirillov}(c) and Lemma~\ref{central-span-faith}(ii) the restrictions of the $\psi_{\mathbf a_k}$ to $\Omega_1(\ZZ(\GG_R))$ form a $\FZ$-basis of $\widehat{\Omega}_1(\ZZ(\GG_R))$. Since
$
\Omega_1(\OO{d})=\mathfrak p^{d-e}/\mathfrak p^d
$, Lemma~\ref{Character-local} implies that 
$\widehat{\Omega}_1(\OO{d})\cong \OO{e}$ as $\FZ$-vector spaces, from which it follows that the projections of the vectors $\mathbf a_k$ onto 
$\bigoplus_{(i,j)\in\Iex}\OO{e}$ form a $\FZ$-basis. 
Using Lemma~\ref{Lablace-identity} we can partition the set $\{\mathbf a_k\}_{k=1}^N$ into $\#\Iex$ sets of cadinality  $ef$, say $\{\mathscr A_{(r,s)}\,:\,(r,s)\in\Iex\}$,  such that the $(r,s)$-components of  the elements of $\mathscr A_{(r,s)}$  form a basis of $\OO{e}$. 
To complete the proof, it suffices to verify that 
\begin{equation}
\label{eq:sumaArs}
\sum_{\mathbf a\in \mathscr A_{(r,s)}}
\dim(\rho_{\psi_\mathbf a})\geq f\sum_{\ell=0}^{e-1}
p^{f(d-\ell)\alpha(r,s)}\quad\text{ for }(r,s)\in\Iex.
\end{equation}
Fix $(r,s)\in\Iex$, and  denote the $(r,s)$-component of any  $\mathbf a\in \mathscr A_{(r,s)}$ by $\mathbf a_{(r,s)}$. For $0\leq \ell\leq e-1$ set 
\[
N_\ell:=\#\{\mathbf a\in\mathscr A_{(r,s)}\,:\,\lev(\mathbf a_{(r,s)})=\ell\},
\]
where $\lev(\cdot)$ is the level as in Definition~\ref{dfn:level}.
From Proposition~\ref{level-prop-Min} it follows that 
if $\lev(\mathbf a_{(r,s)})=\ell$ then 
$\dim (\rho_{\psi_{\mathbf a}})\geq p^{f(d-\ell)\alpha(r,s)}$. This implies that 
\begin{equation}
\label{eq.I}
\sum_{\mathbf a\in\mathscr A_{(r,s)}}\dim(\rho_{\psi_{\mathbf a}})\geq \sum_{\ell=0}^{e-1}N_\ell
p^{f(d-\ell)\alpha(r,s)}.
\end{equation}
Note that $\sum_{\ell=0}^{e-1}N_\ell=ef$, and
 \[\sum_{k=\ell}^{e-1}N_k\leq \dim_{\FZ}(\mathfrak p^\ell/\mathfrak p^e)=(e-\ell)f.
 \] Thus by Lemma~\ref{ineq}
 for $m:=e$, $a_\ell:=N_\ell/f$ and $x_\ell:=p^{f(d-\ell)\alpha(r,s)}$ we obtain 
 \begin{equation}
 \label{eq:II}
 \sum_{\ell=0}^{e-1}N_\ell
p^{f(d-\ell)\alpha(r,s)}\geq f\sum_{\ell=0}^{e-1}
p^{f(d-\ell)\alpha(r,s)}
 .\end{equation}
Inequality~\eqref{eq:sumaArs} now follows from~\eqref{eq.I} and~\eqref{eq:II}.
\end{proof}

\section{The faithful dimension in the case $\g:=\m_{n,c}$}

In this section we prove  
Theorem~\ref{meta}. We begin  by introducing some notation.
Recall that $[n]:=\{1,\ldots,n\}$ for $n\in\mathbb N$.  
 For $k \ge 1$, the set of sequences $$\i=(i_1, \dots, i_k), 
\quad i_1, \dots, i_k \in [n]
$$ is denoted by $\A(n, k)$. 
We say that $\i$ is \emph{decreasing} if  $ i_1 \ge  \dots \ge i_k$. The set of decreasing sequences
in $\A(n, k)$ is denoted by $\D(n,k)$.
There is an obvious sorting map
$$\bar{ \, }: \A(n,k) \to \D(n,k).$$  For example $ \overline{(5,4,4,2,3,1)}=(5,4,4,3,2,1)$. 
We say that $\i \in \A(n,k)$ is a 
\emph{Hall sequence} if  the initial sequence $(i_1, \dots, i_{k-1})$ is decreasing and $i_{k-1} < i_{k}$ (by convention, every element of $\A(n,1)$ is a Hall sequence). The 
subset of Hall sequences in $\A(n,k)$ is denoted by $\Hall(n, k)$. 
\begin{lemma}
\begin{enumerate}
\item[(a)] For $n, k \ge 1$ we have
$\displaystyle \# \D(n, k)= {n+k-1 \choose k}.$
\item[(b)] For $n \ge 2$ and $ k \ge 2$ we have 
\[ \# \Hall(n, k)= \sum_{ m =1}^{n-1} m {k+m-2 \choose k-2}= (k-1) {k+n-2 \choose k}.  \]
\end{enumerate}
\end{lemma}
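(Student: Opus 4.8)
The plan is to prove the two enumeration formulas by direct combinatorial bijections/counting. For part (a), a decreasing sequence $\i=(i_1,\ldots,i_k)$ with $i_1\ge\cdots\ge i_k$ and all entries in $[n]$ is the same datum as a multiset of size $k$ from an $n$-element set, so $\#\D(n,k)$ equals the number of multisets, namely $\binom{n+k-1}{k}$ by the standard stars-and-bars argument (equivalently, the map $(i_1\ge\cdots\ge i_k)\mapsto(i_1,i_2+1,\ldots,i_k+k-1)$ is a bijection onto the set of strictly increasing sequences in $[n+k-1]$ of length $k$). This part is routine.

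For part (b), first I would reduce to part (a). A Hall sequence $\i\in\Hall(n,k)$ with $k\ge 2$ is, by definition, determined by a decreasing initial segment $(i_1,\ldots,i_{k-1})\in\D(n,k-1)$ together with a final entry $i_k$ satisfying $i_k>i_{k-1}$. Grouping Hall sequences by the value $m:=i_{k-1}$ (the smallest entry of the initial segment; when $k=2$ this is just $i_1$), the choices for the initial segment are the decreasing sequences of length $k-1$ whose minimum equals $m$, and the choices for $i_k$ are the $n-m$ values in $\{m+1,\ldots,n\}$. The number of decreasing sequences in $[n]$ of length $k-1$ with minimum exactly $m$ equals the number of decreasing sequences in $\{m,m+1,\ldots,n\}$ of length $k-1$ that use the value $m$, which by the bijection of part (a) (applied to the $(n-m+1)$-element set $\{m,\ldots,n\}$, with the first coordinate forced to be the minimum $m$, hence actually counting decreasing sequences of length $k-2$ in $\{m,\ldots,n\}$) equals $\binom{(n-m+1)+(k-2)-1}{k-2}=\binom{n-m+k-2}{k-2}$. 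Summing over $m$ and reindexing $m\mapsto n-m$, this is not yet in the claimed form; instead I would group by the quantity $n-i_{k-1}$, or more cleanly re-examine the summation variable. Let me restate: set the grouping index to be $m' := i_k - 1$ is awkward; the clean choice is to sum over the value $v:=i_{k-1}$ and write the count as $\sum_{v=1}^{n-1}(n-v)\binom{n-v+k-2}{k-2}$, then substitute $m=n-v$ to get $\sum_{m=1}^{n-1} m\binom{m+k-2}{k-2}$, which matches the stated middle expression.

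For the closed form, I would then evaluate $\sum_{m=1}^{n-1} m\binom{m+k-2}{k-2}$ by a standard hockey-stick type manipulation: write $m\binom{m+k-2}{k-2}=(k-1)\binom{m+k-2}{k-1}+(\text{correction})$, or more directly use $m\binom{m+k-2}{k-2} = (k-1)\binom{m+k-1}{k} - (k-1)\binom{m+k-2}{k}$ (a telescoping identity, verified by expanding binomials), so that the sum telescopes to $(k-1)\binom{(n-1)+k-1}{k} - (k-1)\binom{k-1}{k} = (k-1)\binom{n+k-2}{k}$, since $\binom{k-1}{k}=0$. Alternatively one can cite the Vandermonde/Chu identity $\sum_m m\binom{m+k-2}{k-2}=(k-1)\binom{n+k-2}{k}$ directly. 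This gives the claimed equality $\#\Hall(n,k)=(k-1)\binom{k+n-2}{k}$.

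The only genuinely delicate point — and the step I expect to require the most care — is the bookkeeping in the middle identity of part (b): correctly identifying, for each choice of the ``pivot'' value $i_{k-1}=m$, how many decreasing initial segments of length $k-1$ have that exact minimum, and then matching the summation index with the stated formula $\sum_{m=1}^{n-1} m\binom{k+m-2}{k-2}$. Everything else (the stars-and-bars bijection in (a), and the telescoping sum for the closed form) is standard. I would present the argument by first establishing (a), then the combinatorial decomposition of $\Hall(n,k)$ yielding the sum, and finally the telescoping evaluation, keeping the edge case $k=2$ (where the initial segment has length one) explicitly in mind throughout.
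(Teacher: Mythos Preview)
Your proposal is correct and follows essentially the same route as the paper: stars-and-bars for (a); for (b), grouping Hall sequences by the value $i_{k-1}=m$, counting the $n-m$ choices for $i_k$ and the $\binom{n-m+k-2}{k-2}$ decreasing initial segments with minimum $m$, and reindexing $m\mapsto n-m$ to obtain the middle sum. The only minor difference is in the evaluation of $\sum_{m=1}^{n-1} m\binom{m+k-2}{k-2}$: the paper rewrites the summand as $(k-1)\binom{m+k-2}{k-1}$ and applies the hockey-stick identity, whereas you telescope via $m\binom{m+k-2}{k-2}=(k-1)\bigl[\binom{m+k-1}{k}-\binom{m+k-2}{k}\bigr]$; these are two phrasings of the same computation.
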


\begin{proof} 
For part (a), note that cardinality of $\D(n,k)$ is equal to the number of solutions of the equation $t_1+ \cdots + t_n=k$ in non-negative integers, where  $t_j = \# \{ \ell \in [k]: i_\ell= j \}$. 

For part (b), fix $n, k \ge 2$ and for $m \in [n-1]$ denote by $A_m$ the subset of $ \Hall(n,k)$ consisting of those sequences
$(i_1, \dots, i_k) \in \Hall(n,k)$ such that $ i_{k-1}=m$. Note that since $i_{k-1}< i_k \le n$, we must have $i_{k-1} \in [n-1]$. Once $m$ is fixed, there are $ n-m$ options left for $i_k$. Moreover, the sequence $(i_1, \dots, i_{k-1})$ can be any decreasing sequence in $[n]$ for which $i_{k-1} = m$. Using the map $(i_1, \dots, i_{k-2}) \mapsto (i_1-m+1, \dots, i_{k-2}-m+1)$ these sequences stand in one-to-one correspondence with elements of the set $\D(n-m+1, k-2)$, whose cardinality by part (a) is
equal to $ {n-m+k-2 \choose k-2}$. Since the two  choices can be made independently of each other 
we have 
\[ \# \Hall(n, k)= \sum_{ m =1}^{n-1} (n-m) {n-m+k-2 \choose k-2}. \]
Replacing $m$ by $n-m$ in this expression yields the first equality in part (b). 
For the second equality note that
\begin{align*}
\sum_{m=1}^{n-1}
m{k+m-2\choose k-2}=
\sum_{m=1}^{n-1}
(k-1){k+m-2\choose k-1}=
(k-1)
\sum_{m=1}^{n-1}
{k+m-2\choose k-1},
\end{align*}
and $\sum_{m=1}^{n-1}
{k+m-2\choose k-1}$  counts the number of subsets  of size $k$ of $[k+n-2]$, where the summand ${k+m-2\choose k-1}$ corresponds to subsets 
of $[k+n-2]$
with maximum equal to $k+m-1$. 
\end{proof}

Since 
$\f_{n,2}=\m_{n,2}$,
the case $c=2$ of Theorem~\ref{meta}
follows from \cite[Theorem 2.13]{BMS}. Thus  from now on we assume that $c \ge 3$. 
We denote the standard generators of $\m_{n,c}$ by $X_n := \{ \sv_i: i \in [n] \}$.
For each $k$-tuple $\i= (i_1, \dots, i_k) \in \A(n,k)$, write
\[ \sv_\i:= 
[\sv_{i_1},[\sv_{i_{2}}, \dots, [\sv_{i_{k-1}}, \sv_{i_k}] \dots ]. \]

 Set
$\Hall^{\le c-1}:= \bigcup_{ j=1}^{  c-1}  \Hall(n, j)$ and 
$ \Hall^{\ge 2}:= \bigcup_{  j=2}^{  c}  \Hall(n, j)$.

\begin{lemma}\label{bases}
 Let $n$ and $c$ be as above. For each $ 1 \le j \le c$, the set $\{ \sv_\i: \i \in \Hall(n,j) \}$ is a basis for 
the vector space $\m_{n,c}^j/\m_{n,c}^{j+1}$. In particular, 
\begin{enumerate}
\item[(a)]  $\{ \sv_\i: \i \in \Hall(n,c) \}$ forms a basis for $\ZZ(\m_{n,c})$.  
\item[(b)] Elements in 
$\left\{\sv_\i\,:\,\i\in\Hall^{\le c-1}\right\}$ represent a basis of $\m_{n,c}/\ZZ(\m_{n,c})$. 
\item[(c)] Elements in $\left\{\sv_\i\,:\,\i\in\Hall^{\ge 2}\right\}$ form a basis for $ [\m_{n,c}, \m_{n,c}]$. 
\end{enumerate}

\end{lemma}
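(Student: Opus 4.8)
The plan is to prove the main statement --- that $\{\sv_\i:\i\in\Hall(n,j)\}$ is a $\Z$-basis of $\m_{n,c}^j/\m_{n,c}^{j+1}$ for every $1\le j\le c$ --- and then to read off (a)--(c). Since $\f_{n,c}$ is graded by weight and its second derived algebra is a homogeneous ideal, $\m_{n,c}$ is graded and $\m_{n,c}^j/\m_{n,c}^{j+1}$ is its weight-$j$ component; as each $\sv_\i$ with $\i\in\Hall(n,j)$ is homogeneous of weight $j$, the main statement makes $\bigcup_{j=1}^{c}\{\sv_\i:\i\in\Hall(n,j)\}$ a $\Z$-basis of $\m_{n,c}$. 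Restricting the index set to $\Hall^{\ge 2}$ then gives a basis of $[\m_{n,c},\m_{n,c}]=\m_{n,c}^2$, which is (c); and once we establish $\ZZ(\m_{n,c})=\m_{n,c}^c$ (done at the end), $\Hall(n,c)$ gives (a) while $\Hall^{\le c-1}$ descends to a basis of $\m_{n,c}/\ZZ(\m_{n,c})=\m_{n,c}/\m_{n,c}^c$, which is (b). For $j\le 3$ the metabelian relations of $\m_{n,c}$ lie in weights $\ge 4$, so there the statement is just the classical Hall-basis theorem for the free nilpotent Lie algebra; the content is in $j\ge 4$.

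First I would prove spanning. As in any Lie algebra, $\m_{n,c}^j/\m_{n,c}^{j+1}$ is spanned by the left-normed monomials $\sv_\i$, $\i\in\A(n,j)$. Because $\m_{n,c}$ is metabelian, $[\m_{n,c},\m_{n,c}]$ is abelian, so for $u\in[\m_{n,c},\m_{n,c}]$ and indices $a,b$ one has $\ad_{\sv_a}\ad_{\sv_b}u=\ad_{\sv_b}\ad_{\sv_a}u$, the discrepancy being $\ad_{[\sv_a,\sv_b]}u=[[\sv_a,\sv_b],u]=0$. I would straighten an arbitrary $\sv_\i$ by: (i) using antisymmetry, arranging the innermost bracket so that $i_{j-1}<i_j$ (or $\sv_\i=0$); (ii) using the commutation above, sorting the outer indices $i_1,\dots,i_{j-2}$ into weakly decreasing order; (iii) if still $i_{j-2}<i_{j-1}$, applying the Jacobi identity to $\ad_{\sv_{i_{j-2}}}[\sv_{i_{j-1}},\sv_{i_j}]$ to rewrite $\sv_\i$ in terms of $\ad_{\sv_{i_1}}\cdots\ad_{\sv_{i_{j-3}}}\bigl(-\ad_{\sv_{i_j}}[\sv_{i_{j-2}},\sv_{i_{j-1}}]\bigr)$ and $\ad_{\sv_{i_1}}\cdots\ad_{\sv_{i_{j-3}}}\ad_{\sv_{i_{j-1}}}[\sv_{i_{j-2}},\sv_{i_j}]$. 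A short check, using $i_{j-2}<i_{j-1}<i_j$ and $i_{j-3}\ge i_{j-2}$, shows that in both of these new monomials every outer index is $\ge i_{j-2}$ and the inner pair is increasing, so re-running (ii) turns each into $\pm$ a Hall monomial; hence the reduction terminates after one application of (iii), and $\{\sv_\i:\i\in\Hall(n,j)\}$ spans $\m_{n,c}^j/\m_{n,c}^{j+1}$.

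The main obstacle is linear independence, for which I would use a rank count. The free metabelian Lie ring $\f_n/\f_n''$ is torsion-free: its derived subalgebra is presented, as a module over $S:=\Z[x_1,\dots,x_n]$ with $x_i$ acting as $\ad_{\sv_i}$, by the generators $[\sv_i,\sv_j]$ ($i<j$) and exactly the Koszul/Jacobi relations $x_i[\sv_j,\sv_k]-x_j[\sv_i,\sv_k]+x_k[\sv_i,\sv_j]$, so exactness of the Koszul complex on $x_1,\dots,x_n$ identifies it with the first syzygy module $\ker(S^{\oplus n}\to S)\subseteq S^{\oplus n}$, which is torsion-free; since $\m_{n,c}$ is the quotient of $\f_n/\f_n''$ by its homogeneous ideal of elements of weight $>c$ and the grading splits this off, $\m_{n,c}$ is torsion-free as well. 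Hence each $\m_{n,c}^j/\m_{n,c}^{j+1}$ is free abelian, of rank equal to the dimension of the weight-$j$ component of the free metabelian $\Q$-Lie algebra on $n$ generators; this dimension is $n$ for $j=1$ and $(j-1)\binom{n+j-2}{j}$ for $j\ge 2$ (a classical formula, also recoverable from the Euler characteristic of the Koszul complex) --- precisely $\#\Hall(n,j)$ by the preceding lemma. Since a spanning set whose cardinality equals the rank of a finitely generated free $\Z$-module is automatically a basis (a surjective endomorphism of $\Z^r$ is an isomorphism), the main statement follows. A more self-contained alternative is a Buchberger computation on the above module presentation of $[\m_{n,c},\m_{n,c}]$ together with the truncation relations, using the term order that compares module components $(i,j)$ lexicographically and breaks ties by lex on monomials: all leading coefficients are $\pm 1$, the relations form a Gröbner basis over $\Z$, and the standard monomials are exactly the $x^\alpha[\sv_r,\sv_s]$ with $r<s$, $\operatorname{supp}\alpha\subseteq\{r,\dots,n\}$ and $|\alpha|\le c-2$, i.e.\ the $\sv_\i$ with $\i$ a Hall sequence.

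Finally, to obtain (a) and (b) I would prove $\ZZ(\m_{n,c})=\m_{n,c}^c$. The inclusion $\supseteq$ is immediate. Conversely, let $z\in\ZZ(\m_{n,c})$ and write $z=\sum_{j=1}^{c}z_j$ with $z_j$ homogeneous of weight $j$. For $2\le j<c$, writing $z_j=\sum_{\i}c_\i\sv_\i$ over $\i\in\Hall(n,j)$, the bracket $[\sv_n,z_j]$ equals the $\Z$-combination $\sum_\i c_\i\sv_{(n,i_1,\dots,i_j)}$ of the \emph{distinct} Hall monomials $\sv_{(n,i_1,\dots,i_j)}$ (the prefix stays weakly decreasing because $n\ge i_1$), so $[\sv_n,z_j]=0$ in $\m_{n,c}^{j+1}/\m_{n,c}^{j+2}$ forces $z_j=0$; the case $j=1$ is handled the same way using both $[\sv_n,z_1]=0$ and $[\sv_1,z_1]=0$. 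Hence $z=z_c\in\m_{n,c}^c$, which by the main statement has basis $\{\sv_\i:\i\in\Hall(n,c)\}$. This yields (a), and (b), (c) follow as in the first paragraph.
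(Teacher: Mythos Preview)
Your argument is correct.  The paper does not actually prove this lemma: its entire proof reads ``This is well known.\ See~\cite{ABR} for a proof.''  So there is no method to compare; you have supplied a complete, self-contained proof where the paper gives none.

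A few remarks.  Your spanning argument via the metabelian commutation $\ad_{\sv_a}\ad_{\sv_b}=\ad_{\sv_b}\ad_{\sv_a}$ on $[\m_{n,c},\m_{n,c}]$, followed by a single Jacobi step when $i_{j-2}<i_{j-1}$, is exactly the straightening mechanism the paper itself exploits later in Lemma~\ref{comcal}, so your proof dovetails nicely with what follows.  The linear-independence step via the Koszul identification of the derived subalgebra with the first syzygy module $\ker(S^{\oplus n}\to S)$ is clean and gives torsion-freeness and the correct rank in one stroke; this is more conceptual than a direct Gr\"obner/normal-form verification and is likely close to what the cited reference does.  Finally, your computation of $\ZZ(\m_{n,c})$ by bracketing a homogeneous central element with $\sv_n$ (and with $\sv_1$ in degree one) is sound: for $2\le j<c$ the map $\i\mapsto(n,\i)$ embeds $\Hall(n,j)$ into $\Hall(n,j+1)$, and the degree-one case genuinely needs the second bracket to kill the $\sv_n$-coefficient.
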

\begin{proof}
This is well known. See~\cite{ABR} for a proof.
\end{proof}

In the sequel, we will carry out a close study of the commutator matrix of $\m_{n,c}$ with respect to the bases for $\m_{n,c}/\ZZ(\m_{n,c})$ and
$ [\m_{n,c}, \m_{n,c}]$ described in~Lemma \ref{bases}. 
For $ \i, \j \in \Hall^{\le c-1}$ 
let $\lambda_{\i\j}^\k\in\Z$ be the structural constants defined by 
\begin{equation*}
[\sv_\i, \sv_\j]= \sum_{\k \in \Hall^{\ge 2}} \lambda^{\k}_{\i \j} \sv_\k.
\end{equation*}
Let $\T:=(T_\i)_{\i\in \Hall^{\geq 2}}$ 
be a vector of variables, and for $\i, \j \in \Hall^{\le c-1}$, define
the linear forms
\begin{equation*}
\Lambda_{\i \j}(\T):= \sum_{\k \in \Hall^{\ge 2}} \lambda_{\i \j}^\k T_\k.
\end{equation*}
 The commutator matrix of $\g$
relative to the chosen bases is the $N\times N$ skew-symmetric matrix of linear forms given by 
\[ F_\g(\T):=[ \Lambda_{\i \j}(\T)]_{\i, \j \in \Hall^{ \le c-1} },
\quad\text{where  $N:=\#\Hall^{\le c-1}$.}
 \]

 For any 
$(i_1,\ldots,i_c)\in \Hall(n,c)$, set $\i:=(i_1)$ and $\j:=(i_2,\ldots,i_c)\in\Hall(n,c-1)$. Then the $(\i, \j)$ entry of  
$ F_\g(\T)$ is equal to $T_{(i_1,\ldots,i_c)}$.  
Further, for any $\i :=(i_1, \dots, i_k)$ and $\j := (j_1, \dots, j_l)$, we have $ \Lambda_{\i \j}=0$ unless $\min(k, l)=1$. 

%
%

\begin{lemma}\label{comcal}
Let  $\i:=( i_1)\in \Hall(n,1)$ and $ \j:= (i_2, \dots, i_c) \in \Hall(n, c-1)$. Then
\[
{ \Lambda}_{\i \j}( \T)=
\begin{cases}
T_{(\overline{i_1, \dots, i_{c-1}}, i_c)}& \text{ if }\
i_1 \ge i_{c-1},\\
T_{(i_2, \dots, i_{c-1}, i_1, i_c)}- T_{( \overline{i_2, \dots, i_{c-2}, i_c}, i_1, i_{c-1}) }
&\text{ if }\
i_1<i_{c-1}.
\end{cases}
\]

\end{lemma}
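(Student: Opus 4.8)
The plan is to compute the iterated bracket $[\sv_{i_1},[\sv_{i_2},[\ldots,[\sv_{i_{c-1}},\sv_{i_c}]\ldots]]]$ directly, reducing it to a $\Z$-linear combination of Hall basis vectors of $\m_{n,c}^c = \ZZ(\m_{n,c})$ via Lemma~\ref{bases}(a). Write $\j=(i_2,\ldots,i_c)\in\Hall(n,c-1)$, so that by definition of a Hall sequence we have $i_2\ge i_3\ge\cdots\ge i_{c-1}$ and $i_{c-1}<i_c$. The bracket to be simplified is $[\sv_{i_1},\sv_\j]$ where $\sv_\j=[\sv_{i_2},[\ldots,[\sv_{i_{c-1}},\sv_{i_c}]\ldots]]$.

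The first case, $i_1\ge i_{c-1}$, is the easy one: prepending $i_1$ to the front of $\j$ produces the sequence $(i_1,i_2,\ldots,i_c)$ whose initial segment $(i_1,\ldots,i_{c-1})$ may fail to be decreasing only because $i_1$ might be out of order relative to $i_2,\ldots,i_{c-1}$ — but it is $\ge i_{c-1}$. Here I would invoke the standard \emph{collection process} in a free metabelian Lie algebra: since $\m_{n,c}$ is metabelian, $[[\sv_a,\sv_b],[\sv_{a'},\sv_{b'}]]=0$ in degrees $\ge c$ is automatic once we're in $\m_{n,c}^c$, and moreover all Jacobi rearrangements of a left-normed bracket with a fixed last entry $\sv_{i_c}$ amount to symmetrizing over the \emph{first} $c-1$ indices (the classical fact that in the free metabelian Lie algebra, $[\sv_{a},\sv_{\sigma(w)},\sv_{i_c}]$-type brackets depend only on the multiset $\{a\}\cup w$ once one index is pinned as the ``innermost but one''). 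This gives $\Lambda_{\i\j}(\T)=T_{(\overline{i_1,\ldots,i_{c-1}},\,i_c)}$, where the bar is the sorting map to $\D(n,c-1)$ and appending $i_c$ (with $i_c>i_{c-1}=\min$) makes it a genuine element of $\Hall(n,c)$.

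The second case, $i_1<i_{c-1}$, is where the real work lies, and I expect it to be \textbf{the main obstacle}. Now $i_1$ is strictly smaller than all of $i_2,\ldots,i_{c-1}$, so prepending it keeps $(i_1,i_2,\ldots,i_{c-1},i_c)$ \emph{almost} Hall — the initial segment $(i_1,i_2,\ldots,i_{c-1})$ is \emph{not} decreasing (it starts with the smallest entry), but the sequence $(i_2,\ldots,i_{c-1},i_1,i_c)$ \emph{is} a Hall sequence because $i_2\ge\cdots\ge i_{c-1}\ge i_1$ (wait: we need $i_{c-1}<i_c$ but here the next-to-last entry is $i_1$, and $i_1<i_{c-1}<i_c$, so indeed $(i_2,\ldots,i_{c-1},i_1,i_c)\in\Hall(n,c)$). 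So the first term is immediate once I re-express $[\sv_{i_1},\sv_\j]$ by pushing $\sv_{i_1}$ one step inward using the Jacobi identity: $[\sv_{i_1},[\sv_{i_2},\sv_{\j'}]] = [\sv_{i_2},[\sv_{i_1},\sv_{\j'}]] - [[\sv_{i_2},\sv_{i_1}],\sv_{\j'}]$ — no wait, Jacobi gives $[\sv_{i_1},[\sv_{i_2},x]]=[[\sv_{i_1},\sv_{i_2}],x]+[\sv_{i_2},[\sv_{i_1},x]]$. I would iterate this, commuting $\sv_{i_1}$ inward past $\sv_{i_2},\ldots,\sv_{i_{c-1}}$; the metabelian relation kills every ``$[[\sv_{i_1},\sv_{i_j}],x]$'' correction term except the ones where the remaining factor $x$ is itself a single generator, i.e. only the very last step $[\sv_{i_{c-1}},[\sv_{i_1},\sv_{i_c}]]$ survives as a correction. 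That last correction, $[[\sv_{i_1},\sv_{i_{c-1}}]\ldots]$ rearranged, produces exactly the term $T_{(\overline{i_2,\ldots,i_{c-2},i_c},\,i_1,\,i_{c-1})}$ after sorting the first $c-2$ slots (note $i_c$ joins the sortable block while $i_1,i_{c-1}$ are pinned at the end, and since $i_1<i_{c-1}$ this is a legitimate Hall sequence). The care needed is in bookkeeping: tracking which Jacobi correction terms vanish by metabelianness, and verifying that the two surviving Hall sequences are distinct and normalized correctly; I would organize this as an induction on $c$, or equivalently on the number of inward transpositions of $\sv_{i_1}$, with Lemma~\ref{bases}(a) guaranteeing that once everything is written as a combination of $\sv_\k$ with $\k\in\Hall(n,c)$ the expression is unique and hence the stated formula is forced.
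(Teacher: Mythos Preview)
Your proposal is correct and follows essentially the same route as the paper: the paper isolates the key metabelian identity $[a,[a',a'']]=[a',[a,a'']]$ for $a''\in[\m_{n,c},\m_{n,c}]$ (which is exactly your observation that the Jacobi correction $[[\sv_{i_1},\sv_{i_j}],x]$ dies whenever $x$ is already a commutator), uses it to slide $\sv_{i_1}$ inward, and in the case $i_1<i_{c-1}$ applies the full Jacobi identity only at the innermost bracket $[\sv_{i_1},[\sv_{i_{c-1}},\sv_{i_c}]]$ to split off the second term, then sorts $i_c$ into place in that term by the same swapping identity. Your phrasing ``symmetrizing over the first $c-1$ indices'' is slightly off (the metabelian identity gives symmetry only in the first $c-2$ slots, with the pair $(i_{c-1},i_c)$ fixed), but since in Case~1 the minimum $i_{c-1}$ already sits in slot $c-1$ this does not affect the conclusion.
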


\begin{proof}
First observe that 
for $a, a' \in  \m_{n, c}$ and  $ a'' \in [ \m_{n, c},  \m_{n, c}]$ we have
\begin{equation}
\label{Jac} 
[a, [a',a''] ]= [a', [a, a''] ],
\end{equation}
because  
$[ a, [a', a''] ]= -[a', [a'', a] ]- [ a'', [a, a'] ]=  [a', [a, a''] ],$
where the second equality follows from the fact that $ \m_{n, c}$ is metabelian and $a''$ is a commutator.

Suppose that $ i_1 \ge i_{c-1}$,
and let $2 \le r \le c-1$ be the smallest integer with $ i_1 \ge i_r$. By the assumption such an $r$ exists. If $c =3$, then
 we must have $r=2$, hence 
 $i_1 \ge i_2$. Thus 
 $(i_1,i_2,i_3)\in\Hall(n,3)$, hence $[\sv_{i_1},\sv_{(i_2,i_3)}]=[\sv_{i_1},[\sv_{i_2},\sv_{i_3}]]=\sv_{(i_1,i_2,i_3)}$, from which it follows that $\Lambda_{\i\j}(\T)=\Lambda_{(i_1),(i_2,i_3)}(\T)=T_{(i_1,i_2,i_3)}$. 
 
 For $c>3$, set
 $ \eta_k:= [\sv_{i_k}, [ \sv_{i_{k+1}}, \dots, [\sv_{i_{c-1}}, \sv_{i_c}] \dots ]$.  It follows from~\eqref{Jac} that  
\[ [\sv_{i_1},[\sv_{i_{2}}, \dots, [\sv_{i_{c-1}}, \sv_{i_k}] \dots ]= [\sv_{i_1}, [\sv_{i_2}, \eta_3  ] ] 
= [ \sv_{i_2},  [\sv_{i_1}, \eta_3] ].  
\]
By repeating this process, we can keep swapping $\sv_{i_1}$ with subsequent terms $\sv_{i_3}, \dots$ until 
we arrive at $\sv_{i_r}$ where the process is terminated. Note that  
\[
(i_2, \dots, i_{r-1}, i_1, i_r, \dots, i_{c-1} )= \overline{(i_1, \dots, i_c)}.
\] 
This proves the lemma for the case $i_1
\ge i_{c-1}$.
 
 Let us now consider the case
$i_1<i_{c-1}$. In this case, the process described above can be continued all the way until $\sv_{i_1}$ arrives in the innermost commutator, that is, 
\[ [\sv_{i_1},[\sv_{i_{2}}, \dots, [\sv_{i_{c-1}}, \sv_{i_k}] \dots ]=
[\sv_{i_2}, [ \sv_{ i_3}, \dots, [\sv_{i_{c-2}}, [\sv_{i_1}, [\sv_{i_{c-1}}, \sv_{i_c}]  ] \dots   ]. \]
We can now use the  Jacobi identity to write
$ [\sv_{i_1}, [\sv_{i_{c-1}}, \sv_{i_c}] ]= [\sv_{ i_{c-1} }, [\sv_{i_1},\sv_{i_c}] ] - [\sv_{i_c}, [\sv_{i_{1}}, \sv_{i_{c-1}}] ]$.  
Similar to the previous case, by a repeated application 
of~\eqref{Jac} we can move $\sv_{i_c}$ inside
to the location so that the resulting sequence is decreasing. This proves the claim. 
\end{proof}
From now on we set 
$\widetilde{F}_{\g}(\T)$ to be equal to the submatrix of ${F}_{\g}(\T)$
that lies in the intersection of rows $\i\in\Hall(n,1)$ and columns $\j\in\Hall(n,c-1)$. 
We remark that
the only variables that appear in the entries of $\widetilde{F}_\g(\T)$ are the $T_\i$ for $\i\in\Hall(n,c)$. 

\begin{lemma}\label{rank1}
Let $K$ be any field and fix scalars $\alpha_i,\lambda_j\in K$ for $1\le i\le n$ and $0\le j\leq n-1$.
For each $ \i:=(i_1,\ldots,i_c) \in\Hall(n,c)$ set 
\begin{equation}
\label{eq:T'} T_{ \i}:= 
\displaystyle\left(
\prod_{k=1}^{c-2}  \alpha_{i_k}
\right) 
\begin{vmatrix}
\alpha_{i_{c-1} }  &  \alpha_{i_c}   \\
\lambda_{i_{c-1}-1} & \lambda_{ i_c-1}    \\
\end{vmatrix}.
\end{equation}
Set $\T:= (T_\i)_{\i\in\Hall(n,c)}$. 
Then the matrix $\widetilde{ F}_\g(\T)$ has rank at most $1$.  
\end{lemma}
\begin{proof}  
First we show that for every $ \i:=(i_1) \in \Hall(n,1)$ and $ \j:=(i_2, \dots, i_c) \in \Hall(n, c-1)$, 
\begin{equation}\label{main-formula}
\widetilde{F}_\g(\T)_{\i\j}^{}= 
\left(
  \prod_{k=1}^{c-2}  \alpha_{i_k} \right) \begin{vmatrix}
\alpha_{i_{c-1} }  &  \alpha_{i_c}   \\
\lambda_{i_{c-1}-1} & \lambda_{ i_c-1}    \\
\end{vmatrix}.
\end{equation}
We consider two different cases. If $i_1 \ge i_{c-1}$, then it follows from Lemma \ref{comcal} that 
 \[ { \Lambda}_{\i \j}( \T)= T_{(\overline{i_1, \dots, i_{c-1}}, i_c)}. \]
Since the sequence $(i_2, \dots, i_{c-1})$ is decreasing, and $i_1 \ge i_{c-1}$, it follows that $i_{c-1}$ 
is also the least element of $(\overline{i_1, \dots, i_{c-1}}, i_c)$ and hence
the first $c-2$ terms of $(\overline{i_1, \dots, i_{c-1}}, i_c)$ are precisely $i_1, \dots, i_{c-2}$, perhaps in a different order. 
Thus~\eqref{main-formula} follows from the fact that  the last two terms of the sequence are $i_{c-1}$ and $i_c$, in the same order.

If $i_1<i_{c-1}$, then it follows from Lemma~\ref{comcal} that 
\[{ \Lambda}_{\i \j}( \T)= T_{(i_2, \dots, i_{c-1}, i_1, i_c)}- T_{( \overline{i_2, \dots, i_{c-2}, i_c}, i_1, i_{c-1}) }. \]
Since both $(i_2, \dots, i_{c-1}, i_1, i_c)$ and $( \overline{i_2, \dots, i_{c-2}, i_c}, i_1, i_{c-1})$
are Hall sequences, we have 
\begin{equation}      
\begin{split}
{ \Lambda}_{\i \j}( \T) & = \alpha_{i_{c-1} } \cdot \left( \prod_{k=2}^{c-2 }  \alpha_{i_k} \right) \cdot \begin{vmatrix}
\alpha_{i_1}  &  \alpha_{i_c}   \\
\lambda_{ i_1-1} & \lambda_{i_{c}-1 }    \\
\end{vmatrix} 
 - \alpha_{i_c} \cdot  \left( \prod_{k=2}^{c-2 } \alpha_{i_k} \right) \cdot \begin{vmatrix}
\alpha_{i_1}  &  \alpha_{i_{c-1}}   \\
\lambda_{ i_1-1} & \lambda_{i_{c-1}-1 }    \\
\end{vmatrix} \\
&=  \left( \prod_{k=2}^{c-2 } \alpha_{i_k} \right) \cdot \left( \alpha_{i_{c-1}} \cdot \begin{vmatrix}
\alpha_{i_1}  &  \alpha_{i_c}   \\
\lambda_{ i_1-1} & \lambda_{i_{c}-1 }    \\
\end{vmatrix}  - \alpha_{i_c} \cdot \begin{vmatrix}
\alpha_{i_1}  &  \alpha_{i_{c-1}}   \\
\lambda_{ i_1-1} & \lambda_{i_{c-1}-1 }    \\
\end{vmatrix} \right)
\end{split}
\end{equation}
After applying the elementary identity
\[ r' \begin{vmatrix}
q  &  r   \\
s & t    \\
\end{vmatrix} - r \begin{vmatrix}
q  &  r'   \\
s & t'    \\
\end{vmatrix} = q \begin{vmatrix}
r'  &  r   \\
t' & t    \\
\end{vmatrix} \]
to the expression on the right hand side and merging the prefactor $q:= \alpha_{i_1}$ with the product 
$\prod_{k=2}^{c-2 } \alpha_{i_k} $ we obtain~\eqref{main-formula}. 
From~\eqref{main-formula} it follows that for $\i:=(i_1)\in\Hall(n,1)$ and $\j:=(i_2,\ldots,i_c)\in\Hall(n,c-1)$, we have $\widetilde{F}_\g(\T)_{\i\j}=\alpha_{i_1}\beta_\j$ for some $\beta_\j\in K$. Thus $\widetilde{F}_\g(\T)$ is
expressible as the product of the column vector $(\alpha_i)_{i\in\Hall(n,1)}$ and the row vector $(\beta_\j)_{\j\in\Hall(n,c-1)}$, so that  $\mathrm{rk}( \widetilde{F}_\g (\T))\leq 1$. 
\end{proof}


\begin{example}
We compute $\widetilde{F}_\g(\T)$ for $\g:=\m_{3,3}$ using Lemma \ref{comcal}. The rows are indexed by $(1),(2),(3)\in\Hall(3,1)$ and the columns are indexed by $(1,2),(1,3),(2,3)\in \Hall(3,2)$. Thus
\[
\widetilde F_\g(\T)=
\begin{pmatrix}
T_{(112)} & T_{(113)} & T_{(213)}-T_{(312)}\\
T_{(212)} & T_{(213)} & T_{(223)}\\
T_{(312)} & T_{(313)} & T_{(323)}\\
\end{pmatrix}.
\]
\end{example}

\begin{lemma} \label{combi}
Suppose $r, m,\delta \ge 1$. Let  $K$ be any  field.
\begin{enumerate}
\item[(a)] Suppose $f \in K[x_1, \dots, x_m]$ is a polynomial in $m$ variables over $K$ such  that the
degree of $f$ as a polynomial in $x_i$ is at most $\delta_i$ for all $1 \le i \le m$. Assume that 
$\#K\ge 1+ \max(\delta_1, \dots, \delta_m)$, and  $f(x_1, \dots, x_m)=0$ for all $(x_1, \dots, 
x_m) \in K^m$. Then $f$ is the zero polynomial. 
\item[(b)]  Suppose $f_1, \dots, f_r \in K[x_1, \dots, x_m]$, and the degree of each one of $f_1, \dots, f_r$ in the variable $x_i$ is at most $ \delta_i$ for $1\le i\le m$. 
Let $f: K^m \to K^r$ be the polynomial map defined by $f=(f_1, \dots, f_r)$. Assume that the $K$-subspace
of $K[x_1, \dots, x_m]$ spanned by $f_1, \dots, f_r$ has dimension at least $\delta$.  If $\#K\ge 1+ \max( \delta_1, \dots, \delta_m)$, then there exist
 $u_1, \dots, u_\delta \in K^m$ for which $f( u_1), \dots, f(u_\delta)$ are linearly independent vectors
in $K^r$. 
\end{enumerate}  
\end{lemma}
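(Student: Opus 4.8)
The plan is to prove part (a) by the standard polynomial method (a multivariate Combinatorial Nullstellensatz-type argument), and then derive part (b) from part (a) by a duality/rank argument. For part (a), I would argue by induction on the number of variables $m$. For $m=1$, a nonzero polynomial of degree at most $\delta_1<\#K$ has at most $\delta_1$ roots, so if it vanishes on all of $K$ it must be zero. For the inductive step, write $f(x_1,\ldots,x_m)=\sum_{j=0}^{\delta_m} g_j(x_1,\ldots,x_{m-1})\,x_m^j$, where each $g_j$ has degree at most $\delta_i$ in $x_i$ for $1\le i\le m-1$. Fixing $(x_1,\ldots,x_{m-1})\in K^{m-1}$ arbitrarily, the resulting univariate polynomial in $x_m$ vanishes on all of $K$, so by the base case all its coefficients $g_j(x_1,\ldots,x_{m-1})$ vanish; since $(x_1,\ldots,x_{m-1})$ was arbitrary, each $g_j$ vanishes on $K^{m-1}$, and the inductive hypothesis forces each $g_j\equiv 0$, hence $f\equiv 0$.

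For part (b), I would think of the linear map $\mathrm{ev}\colon\mathrm{Span}_K(f_1,\ldots,f_r)\to \mathrm{Fun}(K^m,K)$ sending a polynomial to its evaluation function on $K^m$. By part (a), applied to the polynomial $\sum_i c_i f_i$ (whose degree in $x_i$ is still at most $\delta_i$, and $\#K\ge 1+\max_i\delta_i$), this map is injective: no nonzero $K$-linear combination of the $f_i$ vanishes identically on $K^m$. Therefore the image of $\mathrm{ev}$ has dimension at least $\delta$. Now consider the composite picture: the evaluation functions of $f_1,\ldots,f_r$ span a subspace of $\mathrm{Fun}(K^m,K)$ of dimension at least $\delta$. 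Equivalently, viewing the values as a (possibly infinite) matrix $M$ whose rows are indexed by $K^m$ and whose columns are indexed by $1,\ldots,r$, with $M_{u,i}=f_i(u)$, the column rank — hence the row rank — of $M$ is at least $\delta$. Pick $\delta$ rows of $M$ that are linearly independent, say indexed by $u_1,\ldots,u_\delta\in K^m$; then the vectors $f(u_1)=(f_1(u_1),\ldots,f_r(u_1)),\ldots,f(u_\delta)=(f_1(u_\delta),\ldots,f_r(u_\delta))$ are linearly independent in $K^r$, as desired.

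The only genuinely delicate point is the passage from ``the image of $\mathrm{ev}$ has dimension $\ge\delta$'' to ``there exist $\delta$ points $u_1,\ldots,u_\delta$ giving linearly independent rows,'' i.e.\ the equality of row rank and column rank for the evaluation matrix $M$. Since the column space of $M$ is finite-dimensional (of dimension equal to the rank, which is at most $r$ and at least $\delta$), one may select finitely many rows whose span already equals the row space; among those, a maximal linearly independent subset has size equal to the rank, which is $\ge\delta$. This is elementary linear algebra, so I expect no real obstacle; the substantive content is entirely in part (a), whose proof is the routine inductive polynomial-method argument sketched above. I would present part (a) in full and part (b) in the compressed form given here.
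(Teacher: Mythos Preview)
Your proof is correct and matches the paper's approach: part (a) is the identical induction on $m$, and part (b) rests on the same key step (part (a) shows no nonzero linear combination of the $f_i$ vanishes on $K^m$). The only cosmetic difference is that the paper first reduces to $\delta=r$ by passing to a maximal linearly independent subset and then argues by contradiction (if the image of $f$ lay in a proper subspace of $K^r$ there would be a nontrivial relation among the $f_i$ as functions, hence as polynomials), whereas you phrase the same linear algebra via row rank equals column rank of the evaluation matrix; neither packaging is more than a line of work.
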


\begin{proof} Part (a) follows by induction on $m$. For $m=1$, the statement is clear. Assuming that (a) holds for $m-1$, write $f(x_1, \dots, x_m):= \sum_{ i =0}^{\delta_m } c_i x_m^i$ where the $c_i\in K[x_1,\ldots,x_{m-1}]$. Fix $(x_1,\dots, x_{m-1})\in K^{m-1}$ and consider the one-variable polynomial \[g(x_m):= f(x_1 ,\dots, x_{m-1}, x_m).
\] Since $g(x_m)=0$ for all $x_m \in K$, and 
$\#K > \deg g$, it follows that all the coefficients of $g$ vanish. The claim follows by the induction hypothesis applied to the $c_i$. 

For (b) 
by choosing a maximal linearly independent subset of $f_1, \dots, f_r$, we can assume that $\delta=r$. Thus the goal is to find  $u_1, \dots, u_r \in K^m$ for which $f( u_1), \dots, f(u_r)$ are linearly independent vectors in $K^r$. If this is not the case, then the image
of $f$ must lie in a proper subspace of $K^r$. This implies that there exists a non-zero vector 
$(c_1, \dots, c_r) \in K^r$ such that $c_1 f_1(u)+ \cdots + c_r f_r(u)=0$ for all $ u \in K^m$. By part (a), 
this implies that the polynomial $c_1 f_1+ \cdots + c_r f_r$ is the zero polynomial, which is a contradiction.
\end{proof}

We are now ready to prove Theorem~\ref{meta} in the case where $R$ is a finite field. In this case, we need to show that for $p$ sufficiently large we have
\begin{equation}
\label{eq:meta1bard}
\mf(\GG_{\F_q})= (c-1) {n+c-2 \choose c}  fq\qquad\text{ for all }f\ge 1,
\end{equation}
where $q:=p^f$.
Recall that $T_i$ is chosen as in \eqref{eq:T'}.
We set $\lambda_0:=0$ and $\lambda_1:=1$. Then the entries of $\widetilde{F}_\g$ are polynomials  in variables $ \alpha_1, \dots, \alpha_n$
and $ \lambda_2, \dots, \lambda_{n-1}$. Thus $\widetilde{ F}_\g$ takes values
in the vector space of matrices of size $n \times \# \Hall(n, c-1)$ with entries in the ring $\F_q[\alpha_1,\ldots,\alpha_n,\lambda_2,\ldots,\lambda_{n-1}]$.
First we prove the following claim:
\[
\text{The set $\{ T_\i\}_{\i \in \Hall(n,c) } $ is linearly independent in 
$\F_q[\alpha_1,\ldots,\alpha_n,\lambda_2,\ldots,\lambda_{n-1}]$. }
\]
To prove this claim suppose that 
\[ \sum_{ \i \in \Hall(n,c)} c_\i \cdot  \left(
\prod_{k=1}^{c-2}  \alpha_{i_k}\right)
 \cdot \begin{vmatrix}
\alpha_{i_{c-1} }  &  \alpha_{i_c}   \\
\lambda_{i_{c-1}-1} & \lambda_{ i_c-1}    \\
\end{vmatrix}=0, \]
for some coefficients  $c_\i \in \F_q$. We will show that all the coefficients must be zero. 
For each such linear dependence, define  $r$ to be the largest value of $i_c$ for which there exists $\i= (i_1, \dots, i_c) \in \Hall(n,c)$
such that $c_\i\neq 0$. Note that clearly $r \ge 2$. First suppose that $ r=2$. This implies that 
$ i_{c-1}=1$ and hence the linear dependence equation simplifies to 
\[ \sum_{ (i_1, \dots, i_{c-2}) \in \D(n, c-2)} c_{(i_1, \dots, i_{c-2},1,2) }  \prod_{k=1}^{c-1}  \alpha_{i_k}
=0. \]
Consider a monomial $g:=\alpha_{i_1}\cdots \alpha_{i_{c-1}}$ that appears in the above linear combination. Then $\deg_{\alpha_i}(g)=\#\{j:1\leq j\leq c-1\text{ and }i_j=i\}$. 
Since the sequence $(i_1,\ldots,i_{c-1})$ is decreasing, the sequence $(\deg_{ \alpha_1}(g), \dots, \deg_{ \alpha_n}(g))$
uniquely determines the values $i_1,\ldots,i_{c-2}$, hence uniquely determines $g$ as well. Thus the monomials in the above linear combination have distinct degree sequences and therefore 
there is no possibility of cancellation between them. 
Next suppose that $ r \ge 3$. This implies that there are some monomials involving $ \lambda_{r-1}$ and 
that there are no terms involving $ \lambda_j$ for $j >r-1$. It is also clear that the only terms involving 
$ \lambda_{r-1}$ correspond to those sequences $(i_1, \dots, i_c)$ for which $ i_c=r$ and hence
these terms are of the form
\[ \prod_{k=1}^{c-2}  \alpha_{i_k} ( \alpha_{i_{c-1}} \lambda_{r-1} - \alpha_r \lambda_{i_{c-1}-1}) . \]
By considering only the terms involving $ \lambda_{r-1}$ we obtain 
\[ \sum_{ (i_1, \dots, i_{c-1}) \in \D(n-1,c-1), i_{c-1}<r } c_\i  \prod_{k=1}^{c-1}  \alpha_{i_k} =0. \]
As in the previous case the monomials corresponding to different $\i$ are distinct, and hence this can 
only happen if $c_\i=0$ for all such terms. This contradicts the choice of $r$. 
This completes the proof of linear independence of the $\{T_\i\}_{\i\in\Hall(n,c)}$.

Next set
$f:=\widetilde{F}_\g(\T')$ in Lemma \ref{combi}(b), where $\T':=(T_\i)_{\mathbf i\in\Hall(n,c)}$. Note that the components of $f$ are the entries of 
$\widetilde{F}_\g(\T')$ which are
polynomials
in the $\alpha_i$ and the $\lambda_i$. 
Furthermore, for every $\i:=(i_1,\ldots,i_c)\in\Hall(n,c)$ if we set $\i':=(i_1)$ and $\j':=(i_2,\ldots,i_c)$ then by Lemma \ref{comcal}
we have $\widetilde{F}_\g(\T)_{\i'\j'}=T_\i$.
From  linear independence of the set $\{ T_\i\}_{\i \in \Hall(n,c) } $ and 
Lemma \ref{combi}(b)
it follows that  for $q\geq c$ one can find $ \# \Hall(n,c)$ linearly independent vectors $
\T':=(T_\i)_{\mathbf i\in\Hall(n,c)}$
with entries in $\F_q$
for which the values $\widetilde F_\g(\T')$ are linearly independent. By Lemma
\ref{rank1} it follows that  
 $\rank(\widetilde{F}_\g(\T'))=1$ for all such $\T'$. 
 We can extend each such vector $\T'$ to a vector $\T:=(T_\i)_{\i\in\Hall^{\geq 2}}$ by setting  
$T_\i=0$ for $\i\in\Hall(n,k)$, where $2\leq k\leq c-1$. For the latter vectors $\T$ we have 
$\rank_{\F_q} (F_\g(\T))=2$. Since $F_\g$ is skew symmetric and each variable $T_\i$
where $\i:=(i_1,\ldots,i_k)$ appears in the $(\i',\j)$-entry of $F_\g(\T)$, where $\i':=(i_1)$ and $\j:=(i_2,\ldots,i_k)$, we have $\rank_{\F_q}(F_\g(\T))\geq 2$ whenever $\T\neq 0$.  
 In view of Theorem \ref{Algorithm} this proves the assertion
in the case that $R$ is a finite field.

Let us now consider the case where $R=  \mathcal O/\mathfrak p^d$.  
Using Theorem \ref{thm:upperbound} and equality~\eqref{eq:meta1bard}
which was  just proved,  the upper bound follows immediately. Let us now turn to establishing the lower bound. 
We start with the following lemma.
Recall the notion of level of an element of $\mathcal O/\mathfrak p^d$ from Definition~\ref{dfn:level}.
\begin{lemma}\label{skew}
Let $B$ be an $r \times r$ skew-symmetric matrix with entries in 
$\mathcal O/\mathfrak p^d$. Suppose $B$ has an entry of level $\ell$. Then 
\[ \# \ker B \le p^{ f d(r-2) + 2 f\ell}. \]
\end{lemma}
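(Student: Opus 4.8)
The plan is to bound $\#\ker B$ from above by bounding the \emph{image} $B(R^{\oplus r})\subseteq R^{\oplus r}$ from below, where $R=\OO{d}$. Since $R^{\oplus r}/\ker B\cong B(R^{\oplus r})$ as $R$-modules and $\#R^{\oplus r}=p^{fdr}$, it suffices to show that $\#B(R^{\oplus r})\geq p^{2f(d-\ell)}$; then $\#\ker B=p^{fdr}/\#B(R^{\oplus r})\leq p^{fdr-2f(d-\ell)}=p^{fd(r-2)+2f\ell}$.

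To produce this lower bound on the image, I would first observe that, since $p$ is odd, a skew-symmetric matrix over $R$ has vanishing diagonal (each $B_{kk}$ satisfies $2B_{kk}=0$ and $2$ is invertible in $R$, the residue characteristic being odd); in particular the entry of level $\ell$ must be off-diagonal, say it is $\mathfrak{b}:=B_{ij}$ with $i\neq j$. Because $\mathfrak{b}$ has level $\ell$, it generates the ideal $\mathfrak{p}^\ell/\mathfrak{p}^d$ of $R$, an $R$-module of cardinality $p^{f(d-\ell)}$. Now the $i$th and $j$th columns $Be_i,Be_j$ (with $e_1,\dots,e_r$ the standard basis of $R^{\oplus r}$) lie in $B(R^{\oplus r})$, and if $\pi\colon R^{\oplus r}\to R^{\oplus 2}$ is the $R$-linear projection onto the $i$th and $j$th coordinates, then the vanishing of the diagonal gives $\pi(Be_j)=(\mathfrak{b},0)$ and $\pi(Be_i)=(0,-\mathfrak{b})$. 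Hence the $R$-submodule of $R^{\oplus 2}$ generated by $\pi(Be_i)$ and $\pi(Be_j)$ equals $(\mathfrak{p}^\ell/\mathfrak{p}^d)^{\oplus 2}$, which has $p^{2f(d-\ell)}$ elements; since this submodule is contained in the $R$-submodule $\pi\big(B(R^{\oplus r})\big)$ of $R^{\oplus 2}$, we get $\#B(R^{\oplus r})\geq\#\pi\big(B(R^{\oplus r})\big)\geq p^{2f(d-\ell)}$, as desired.

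I do not expect a genuine obstacle here; the one point that has to be handled with care --- and the crux of the argument --- is exactly the choice of the two columns and two coordinates above: the level-$\ell$ entry being off-diagonal (which is where oddness of $p$ enters) is what forces $\pi(Be_i)$ and $\pi(Be_j)$ to be supported on complementary coordinates and therefore to span a submodule as large as $(\mathfrak{p}^\ell/\mathfrak{p}^d)^{\oplus 2}$. An alternative, more computational route would lift $B$ to a skew-symmetric matrix $\widetilde{B}$ over the discrete valuation ring $\mathcal{O}$, put $\widetilde{B}$ in Smith normal form $\mathrm{diag}(\varpi^{a_1},\dots,\varpi^{a_r})$ with $a_1\leq\cdots\leq a_r$, note that $a_1$ is the minimal valuation of an entry of $\widetilde{B}$ (so $a_1\leq\ell$) and $a_1+a_2$ is the minimal valuation of a $2\times 2$ minor of $\widetilde{B}$ --- hence at most the valuation $2\ell$ of the principal minor $\mathfrak{b}^2$ on rows and columns $\{i,j\}$ --- and finally use $\#\ker B=\prod_i p^{f\min(a_i,d)}$ together with $\min(a_1,d)+\min(a_2,d)\leq\min(a_1+a_2,2d)\leq 2\ell$. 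The image-counting argument is shorter and avoids the fact that $\OO{d}$ is not a principal ideal ring, so that is the route I would present.
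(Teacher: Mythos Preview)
Your argument is correct and is essentially the dual of the paper's own proof: both exploit the vanishing diagonal and the off-diagonal entry $B_{ij}$ of level $\ell$, the paper by directly counting that for fixed $x_3,\dots,x_r$ the two equations $B_{12}x_2=-\sum_{j\ge 3}B_{1j}x_j$ and $B_{21}x_1=-\sum_{j\ge 3}B_{2j}x_j$ leave at most $p^{f\ell}$ choices for each of $x_1,x_2$, and you by bounding the image via the projection onto the $(i,j)$-coordinates. The only cosmetic gap is that your sentence ``the entry of level $\ell$ must be off-diagonal'' needs $\ell<d$; the paper disposes of the trivial case $\ell=d$ first, and you should too (your image bound $p^{2f(d-\ell)}=1$ still gives the trivial inequality in that case, so nothing is lost).
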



\begin{proof}
Write $B=(B_{ij})_{ 1 \le i, j \le r}$. If $\ell=d$, there is nothing to prove. Next assume that the entry in question is non-zero, and hence non-diagonal. After possibly permuting rows and columns, we can assume that $\lev(B_{12})= \ell$. Consider a vector $(x_1,\ldots,x_r) \in (\mathcal O/\mathfrak p^d)^{\oplus r}$ in $\ker B$. Then the following equations hold:
 \begin{equation*}
B_{12} x_2  = - \sum_{ j =3}^{r} B_{1j} x_j\quad \text{and}\quad 
B_{21} x_1  = - \sum_{ j =3}^{r} B_{2j} x_j.
\end{equation*}
Once the values of the $x_j$ for $ 3 \le j \le r$ are set, the number of choices for 
each one of $x_1$ and $x_2$ is $p^{f\ell}$. Since the number of choices for
$(x_3, \dots, x_r)$ is $p^{fd (r-2)}$, the claim follows immediately. 
\end{proof}

In order to avoid confusion with parameters $n$ and $ c$ of $\m_{n,c}$, henceforth we will denote the parameters  $l_1, l_2, n, m$ 
that were associated to $\g$ in Section~
\ref{Sec:polyfinfil}
by 
$\underline l_1, \underline l_2, \underline n, \underline m$. 
Thus in particular we have 
\[
\underline m=\#\Hall^{\geq 2}\quad,\quad 
\underline l_1=\#\Hall(n,c)=(c-1) {n+c-2 \choose c}\quad\text{and}\quad\underline l_2=0.
\] Moreover, the commutator matrix $ F_\g( \T)$ is an $\underline n \times \underline n$ matrix with $\underline n = \rank_\Z (\m_{n,c})- \underline l_1$.

\begin{lemma}\label{kernelskew}
Let  $ \mathbf{a}:=  
( \mathbf{a}(\i))_{ \i \in
\Hall^{\geq 2}}\in   (\mathcal O/\mathfrak p^d)^{\oplus \underline m}$. Then 
\[ \#\ker_{\mathcal O/\mathfrak p^d}F_\g(  \mathbf{a}) \le \min_{ \i \in \Hall(n,c) } p^{fd(\underline n-2) +2f \lev ( \mathbf{a} ( \i) ) }. \]
\end{lemma}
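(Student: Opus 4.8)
The plan is to deduce this bound from Lemma~\ref{skew} by exhibiting, for each $\i\in\Hall(n,c)$, an entry of the matrix $F_\g(\mathbf a)$ whose level is exactly $\lev(\mathbf a(\i))$.

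First I would fix $\i=(i_1,\dots,i_c)\in\Hall(n,c)$ and set $\i':=(i_1)\in\Hall(n,1)$ and $\j:=(i_2,\dots,i_c)$. Since $(i_1,\dots,i_{c-1})$ is decreasing we have $i_1\ge i_{c-1}$, and since $(i_2,\dots,i_{c-1})$ is decreasing with $i_{c-1}<i_c$ we get $\j\in\Hall(n,c-1)$; hence $(\i',\j)$ is a legitimate pair of indices for the rows and columns of $F_\g$. Being in the first case of Lemma~\ref{comcal}, and using that the sequence $(i_1,\dots,i_{c-1})$ is already decreasing, we obtain $\Lambda_{\i'\j}(\T)=T_{(\overline{i_1,\dots,i_{c-1}},\,i_c)}=T_{\i}$ (this identity is also recorded explicitly just before Lemma~\ref{comcal}). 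Substituting $\T=\mathbf a$, the $(\i',\j)$-entry of $F_\g(\mathbf a)$ therefore equals $\mathbf a(\i)$, which by Definition~\ref{dfn:level} has level $\lev(\mathbf a(\i))$.

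Now $F_\g(\mathbf a)$ is an $\underline n\times\underline n$ skew-symmetric matrix over $\mathcal O/\mathfrak p^d$ that possesses an entry of level $\lev(\mathbf a(\i))$, so Lemma~\ref{skew} applied with $r=\underline n$ and $\ell=\lev(\mathbf a(\i))$ yields
\[
\#\ker_{\mathcal O/\mathfrak p^d}F_\g(\mathbf a)\le p^{fd(\underline n-2)+2f\lev(\mathbf a(\i))}.
\]
Since $\i\in\Hall(n,c)$ was arbitrary, taking the minimum over all such $\i$ completes the argument. There is no genuine obstacle here; the only point that needs care is to observe that, precisely because $\i$ is a Hall sequence, the relevant entry of $F_\g$ is the single variable $T_\i$ and not the two-term difference occurring in the second case of Lemma~\ref{comcal}, which is what allows one to read off the level directly.
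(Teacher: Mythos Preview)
Your proof is correct and follows exactly the same route as the paper: fix $\i\in\Hall(n,c)$, split it as $\i'=(i_1)$ and $\j=(i_2,\dots,i_c)$, invoke Lemma~\ref{comcal} to see that the $(\i',\j)$-entry of $F_\g(\mathbf a)$ is $\mathbf a(\i)$, then apply Lemma~\ref{skew}. Your write-up merely spells out in more detail why $\j\in\Hall(n,c-1)$ and why the first case of Lemma~\ref{comcal} applies, points the paper leaves implicit.
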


\begin{proof}
By Lemma~\ref{bases} we have $\underline n=\sum_{j=1}^{c-1} \#\Hall(n,j)$ and the rows and columns of $F_\g(\mathbf a)$ are indexed by elements of $\Hall^{\le c-1}=\bigcup_{j=1}^{c-1}\Hall(n,j)$. Now fix any $\i',\j'\in\Hall^{\leq c-1}$. 
By Lemma \ref{skew}  for $B:=F_\g(\mathbf{a})$, $r:=\underline n$, and $\ell:=\lev(F_\g(\mathbf a)_{\i' \j'})$ we have
\[
 \#\ker_{\mathcal O/\mathfrak p^d}F_\g(  \mathbf{a}) \le  p^{fd(\underline n-2) +2f \lev ( F_\g(\mathbf a )_{\i'\j'}) }.
\]
In particular, if for any 
$\i\in\Hall(n,c)$ we set 
$\i':=(i_1)$ and 
$\j':= ( i_2, \dots, i_c)$
then by Lemma~\ref{comcal}
we obtain 
$\#\ker_{\mathcal O/\mathfrak p^d}F_\g(  \mathbf{a}) \le  p^{fd(\underline n-2) +2f \lev ( \mathbf{a} ( \i) ) }$. Finally, by taking the minimum over all $\i\in\Hall(n,c)$ we obtain the assertion of the lemma. 
%
\end{proof}

Let $S=\{ \mathbf a_1,\ldots,\mathbf a_{fe \underline l_1} \} \subseteq (\mathcal O/\mathfrak p^d)^{\oplus\underline m}$ be a set of vectors that corresponds to $\mf(\GG_R)$  in 
Proposition~\ref{Algorithm-R} (thus we assume that $p$ is sufficiently large accordingly). 
Let $ \pi_\i$ for $\i\in\Hall(n,c)$ denote the natural  projection 
\[
\bigoplus_{\i\in\Hall(n,c)}(\mathcal O/\mathfrak p^e)\to \mathcal O/\mathfrak p^e,
\] 
that maps $\mathbf a$ to $\mathbf a(\i)$.  
Using 
Lemma \ref{Lablace-identity},
we partition $S$ into sets $S_\i$, each of cardinality $ef$, such that $ \pi_\i ( {\mathsf{proj}}( S_\i) )$ is a basis
for the $\F_p$-vector space $ \mathcal O/\mathfrak p^e$. By Lemma 
\ref{kernelskew} we have
\begin{align}\label{firsteq}
\mf(\GG_R)&=\notag\sum_{i=1}^{fe \underline l_1}
\left(
\frac{p^{fd\underline n}}{\#\ker_{\mathcal O/\mathfrak p^d}(F_\g(\mathbf a_i))}
\right)^\frac12\\
&=\sum_{ \i \in \Hall(n, c) } \sum_{ \mathbf{a} \in S_\i}  \left( \frac{p^{fd\underline n}}{\#\ker_{\mathcal O/\mathfrak p^d}(F_\g(\mathbf a ))} \right)^{\frac12}
\ge \sum_{ \i \in \Hall(n, c) } \sum_{ \mathbf{a} \in S_\i} p^{f( d- \lev( \mathbf{a}(\i) ) } ).
\end{align}
Recall that for every $ 0 \le \ell \le e-1$, the set of $x \in  (\mathcal O/\mathfrak p^e)$ with $\lev(x) \ge \ell$ is an $\F_p$- subspace of dimension $ f (e-\ell)$.  
Let us call this subspace $\mathcal W_\ell$. 
 Fix $ \i \in \Hall( n,c)$, and set $a_\ell= \frac{1}{f} \# \{ \mathbf{a}  \in S_{\i}: \lev( \mathbf{a}(\i)) = \ell \}$. 
Since $ \{ \pi_\i(\mathsf{proj}(\mathbf{a})): \mathbf{a} \in S_\i \}$ is a basis for $ \mathcal O/\mathfrak p^e$, it follows that for every $ 0 \le \ell \le e-1$, the 
set of $\mathbf{a} \in S_\i$ with 
$ \lev(\mathbf{a}(\i)) \ge \ell$ is a linearly independent subset of $\mathcal W_\ell$. Thus, the cardinality of the latter set of vectors is at most $\dim \mathcal W_\ell=f( e- \ell)$. Hence
$a_\ell+ \cdots + a_{e-1} \le e- \ell$ for all $ 0 \le \ell \le e-1$. 
Using Lemma~\ref{ineq} for $m=e$ and $x_\ell= p^{f(d- \ell)}$, where $ 0 \le \ell \le e-1$, we deduce
\begin{equation}\label{secondeq}
 \frac{1}{f} \sum_{ \mathbf{a} \in S_\i} p^{f( d- \lev( \mathbf{a}(\i) )) }  = \sum_{ \ell =0}^{e-1} a_\ell x_\ell   \ge 
\sum_{\ell=0}^{e-1} x_\ell= \sum_{\ell=0}^{e-1} p^{f(d-\ell) }. 
\end{equation} 
Equivalently, $ \sum_{ \mathbf{a} \in S_\i} p^{f( d- \lev( \mathbf{a}(\i) ) )}   \ge f \sum_{\ell=0}^{e-1} p^{f(d-\ell) }. $
The assertion of Theorem~\ref{meta} follows immediately from combining \eqref{firsteq} and \eqref{secondeq}.

\bibliographystyle{amsalpha}
\bibliography{Ref.bib}

\end{document}